\newtheorem{theorem}{Theorem}
\newtheorem{proposition}{Proposition}
\newtheorem{corollary}{Corollary}
\newtheorem{lemma}{Lemma}
\theoremstyle{definition}
\newtheorem{example}{Example}
\newtheorem{definition}{Definition}
\newtheorem{remark}{Remark}
\DeclareMathOperator{\divi}{|}
\newcommand{\NatZero}{\mathbb{N}_{0}} 
\newcommand{\N}{\mathbb{N}} 
\newcommand{\Primes}{\mathbb{P}}
\newcommand{\kary}{$k$-ary\xspace}
\renewcommand{\leq}{\leqslant}
\newcommand{\1}{\{1\}}
\newcommand{\A}{\mathbb{A}}
\newcommand{\T}{\Theta}
\newcommand{\Pow}{\mathcal{P}}
\newcommand{\Ang}{\mathcal{A}}
\newcommand{\makePersonName}[2]{\newcommand{#1}{#2\xspace}}
\makePersonName{\Mobius}{M\"{o}bius}
\makePersonName{\Toth}{T\'{o}th}
\newcommand{\perf}{P}
\newcommand{\prim}{P}
\newcommand{\Rat}{\rm{Rat}}
\newcommand{\Aname}{\text{paradigmatic }}
\renewcommand{\S}{\Sigma}
\newcommand{\makenest}[3]{\newcommand{#1}[1]{\ensuremath{\left#2##1\right#3}}}
\title{On a General Class of $A$-functions}
\author[1]{Joseph Vade Burnett}
\author[2]{Alex Taylor}
\affil[1]{The University of Texas at Dallas, Joseph.Burnett@utdallas.edu}
\affil[2]{The University of Illinois at Urbana-Champaign, alext3@illinois.edu}
\begin{document}

\maketitle

\begin{abstract}
    Expanding upon recent work, a new class of $A$-functions is introduced that can be viewed as an appropriate generalization of the class of regular $A$-functions, the class of structured $A$-functions, and the class of perfect $A$-functions. Algebraic results are discussed and analysis is performed to deduce new results concerning rational arithmetical functions.
\end{abstract}

\section{Introduction}

\subsection{$A$-functions}
Denote by $D(n)$ the set of positive divisors of $n$. Let $A: \N \rightarrow \Pow(\N)$ be any function such that $1 \in A(n) \subseteq D(n)$. Then $A$ is called an \textbf{$A$-function}.

Examples of $A$-functions have appeared in the literature for many years, but usually in different forms. To each $A$-function $A$ a relation $\divi_A$ may be associated, where $d \divi_A n$ if and only if $d \in A(n)$. In this context we may call such $d$ ``$A$-divisors of $n$'' and discuss the ``$A$-divisibility relation''.

Denote by $\Ang$ the set of complex-valued arithmetical functions. If $A$ is an $A$-function, an arithmetical convolution-type operation $*_A$ may be associated to it, where for $f,g \in \Ang$, 

\begin{equation}
    (f*_Ag)(n) := \sum_{d \in A(n)} f(d) g\parens{\frac{n}{d}}.    
\end{equation}

In this case $*_A$ is called the ``$A$-convolution'', and we may discuss algebraic properties of this operator. Since the term ``$A$-function'' was only recently introduced, yet satisfies a need to be able to refer to divisibility relations and arithmetical convolutions in the abstract, we shall refer to such divisibility relations and arithmetical convolutions appearing in the literature as $A$-functions, and speak of them in terms of $A$-functions in order to have a uniform discussion.

\subsection{Previous work}

One of the first $A$-functions to be introduced was done so via the unitary divisibility relation of Vaidyanathaswamy \cite{vaid}. Subsequently, various works were produced which studied and generalized the unitary divisibility relation and associated convolution (see \cite{Cohen1,hauk,Ken,tothB}). In several cases, entire classes of $A$-functions were introduced, viewing the usual divisibility relation (which corresponds to the $A$-function $D$) and the unitary divisibility relation as the first two elements of a sequence of $A$-functions that preserved some property or theme (see \cite{alla,us1,Cohen2,Cohen3,hauk1,sury}).

Certain classes of $A$-functions have been the subject of many manuscripts devoted to exploring their properties. The regular $A$-functions of Narkiewicz \cite{nark} in particular have been heavily investigated and generalized (see \cite{us3,hanu,hauk2,Mc,rama,tothP,toth,tothS,yocom}).

Other generalizations include the generalizations of the infinitary divisibility relation of Cohen (see \cite{us2,us4,Cohen2,Cohen3,LS}), which will be actively discussed in this work, and the generalization of convolutions to convolutions involving weight functions (see \cite{dave,foti}).

\subsection{Purpose}

In this work, we describe the paradigm through which we view many of the $A$-functions in the literature. In particular, the class of regular $A$-functions, the class of structured $A$-functions, and the class of perfect $A$-functions are portrayed as being the three sides of a diagrammatic triangle (see figure below) whose vertices are the $A$-functions $D$, $D^1$, and $\T$, corresponding to the Dirichlet, Unitary, and Infinitary relations/convolutions, respectively. 

\begin{center}\includegraphics[scale=.2]{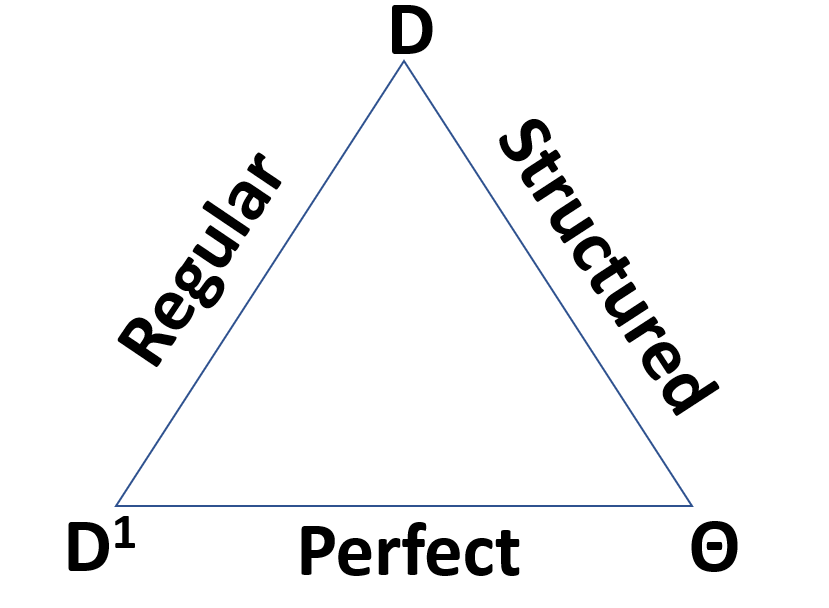}\end{center}

One may then view the regular, structured, and perfect $A$-functions as being those $A$-functions that preserve properties common to $D$ and $D^1$, to $\T$ and $D$, and to $D^1$ and $\T$, respectively.

We introduce a new broad class of $A$-functions that contains as special subclasses the regular, structured, and perfect $A$-functions. From this broad class we extract a subclass, which we call the \textbf{\Aname} $A$-functions, whose members are $A$-functions possessing many properties common to the regular, structured, and perfect $A$-functions. Algebraic properties pertaining to the corresponding arithmetical convolution, numerical factorization properties tied to the concept of multiplicativity, and analytic properties resulting from a natural metric on $\Ang$ are explored.

\subsection{Outline}

In Section \ref{Prelims} we outline some preliminary definitions and results concerning both $A$-functions in the general and in specific instances when $A$ is regular or structured or perfect. The notion of the \textit{iterate} is recalled for later use, and we discuss various arithmetical functions which will appear later, including rational arithmetical functions and a natural metric on $\Ang$. 

Special attention is given to the $A$-function $\T$, corresponding to the infinitary divisibility relation/convolution, and in particular we introduce a new equivalent definition of $\T$. Finally, we state the properties common to the regular, structured, and perfect $A$-functions in order to guide the next section.

In Section \ref{complete}, we introduce our broad class of $A$-functions, termed \textbf{complete $A$-functions}, in terms of certain generalized multiplicativity conditions. A unique decomposition of each $n \in \N$ based off of $A$ follows, enabling us to define the \textbf{paradigmatic} $A$-functions in a natural manner. It is shown that \Aname $A$-functions correspond to commutative, associative $A$ convolutions and that they may be used to characterize all complete $A$-functions.

We introduce the concept of the ``maximal class'' of an arithmetical function $f$ with $f(1) = 1$, which is related to a concept of Ryden \cite{ryden}. We prove Ryden's theorem using our new notation and results, and deduce that all \Aname $A$-functions may be realized as the maximal class of some arithmetical function. We further show how every complete $A$-function may be associated to a perfect $A$-function. Examples abound throughout this section, including a special focus on the $A$-function $G$ from \cite{us3}.

Finally, in Section \ref{Results} we bring together all of our tools in order to state a result with an analytic flavor which is a generalization of the following statement: Every multiplicative arithmetical function may be ``approximated'' using rational arithmetical functions. Here, ``approximation'' occurs with respect to the metric of Schwab and Silberg \cite{schw}. 

\section{Preliminary Definitions and Results}\label{Prelims}

\subsection{Properties $A$-functions can possess}

Let $A \in \A$. $A$ is called \textbf{simple} if $1 \in A(n)$ for all $n$ and \textbf{reflexive} if $n \in A(n)$ for all $n$. $A$ is called \textbf{symmetric} if $d \in A(n) \Rightarrow \frac{n}{d} \in A(n)$. $A$ is \textbf{transitive} if $A(d) \subseteq A(n)$ whenever $d \in A(n)$, which is to say that $\divi_A$ is transitive as a relation. If $A(n) = \{1,n\}$ ($n \neq 1$) then $n$ is called \textbf{$A$-primitive}; 1 is not considered $A$-primitive for any $A$. We denote the set of $A$-primitive elements by $\Primes_A$, with $\Primes := \Primes_D$ the set of prime numbers. Note that $\Primes \subseteq \Primes_A$ as long as $A$ is reflexive.

$A$ is said to be \textbf{multiplicative} if $A(mn) = A(m) \cdot (n) := \{ab \divi a \in A(m), b \in A(n)\}$ whenever $(m,n) = 1$. Since $(m,n) = 1 \Leftrightarrow m \in D^1(mn)$, it is easy to see that $A$ is multiplicative if and only if $m \in D^1(mn) \Rightarrow A(mn) = A(m) \cdot A(n)$. This has been generalized by \cite{us4,Mc,yocom}. In (\cite{us4}), the authors defined an $A$-function $A$ to be \textbf{class-$B$}, for $B \in \A$, if $m \in B(mn) \Rightarrow A(mn) = A(m) \cdot A(n)$. We denote the set of all class-$B$ $A$-functions by $C(B)$. 

$A$ is called \textbf{homogeneous} if $A$ is multiplicative and for every $p_1,p_2 \in \Primes$ and every $0 \leq b \leq a$, $p_1^b \in A(p_1^a) \Leftrightarrow p_2^b \in A(p_2^a)$. Examples of homogeneous $A$-functions include the \kary $A$-functions of Cohen \cite{Cohen2}, the $k$-factorization $A$-functions of Litsyn and Shevelev \cite{LS}, and the higher-order $A$-functions of Alladi \cite{alla}.

On the other hand, we call $A$ a \textbf{cross-convolution} if it is multiplicative but not homogeneous. The concept of cross-convolution in the context of $A$-functions is due to \Toth \cite{toth}, and was formalized and generalized in \cite{us2}. If we let $\{A_{\lambda}\}_{\lambda \in \Lambda}$ be an indexed set of homogeneous $A$-functions and let $f:\Primes \rightarrow \Lambda$ be a function, then we may define $A_f$ to be the multiplicative $A$-function such that $A_f(p^a) = A_{f(p)}(p^a)$ for all $p \in \Primes$ and all $a \in \N$, with $A_f(1) = \1$ by convention. We then call $A_f$ the \textbf{$f$-cross convolution $A$-function with respect to $\{A_{\lambda}\}_{\lambda \in \Lambda}$}. If $f$ is a constant function, then $A_f$ is homogeneous.

The original cross-convolution $A$-functions of \Toth \cite{toth}, the structured $A$-functions of \cite{us2}, and the mixed-multiplicative $A$-functions of Litsyn and Shevelev \cite{LS} are examples of families of cross-convolution $A$-functions appearing in the literature.

\subsection{The iterate}

The divisibility relation corresponding to the unitary $A$-function $D^1$ was first introduced by Vaidyanathaswamy \cite{vaid}, who noted properties that it possessed which were similar to the usual divisibility relation, and who essentially initiated a search for other such divisibility relations. In particular, this led to the introduction of entire families of $A$-functions having $D$ and $D^1$ as elements:

\begin{itemize}
    \item The Suryanarayana \kary $A$-functions $S_k$ of \cite{sury}, where $d \in S_k(n)$ if and only if $d \divi n$ and the largest $k^{th}$-power divisor of $d$ and $\frac{n}{d}$ is 1.
    \item The regular $A$-functions of Narkiewicz, discussed in Section \ref{Regular}.
    \item The higher-order $A$-functions $A_k$ of Alladi \cite{alla}, which are multiplicative and are recursively defined such that $p^b \in A_k(p^a) \Leftrightarrow A_{k-1}(p^b) \cap D(p^{a-b}) = \1$, where $A_0$ is $D$ and $A_1$ becomes $D^1$.
    \item The \kary $A$-functions of Cohen \cite{Cohen2} (which were inspired by Suryanarayana's introduction of the bi-unitary $A$-function $D^2$ and the recursive approach taken by Alladi), which we denote by $D^k$, where $p^b \in D^k(p^a)$ if and only if $D^{k-1}(p^b) \cap D^{k-1}(p^{a-b}) = \1$, where $D^0 := D$ and $D^1$ is the unitary divisibility relation.
\end{itemize}

Taking inspiration from the final family of $A$-functions, the \kary $A$-functions of Cohen, the authors of \cite{us2} introduced the following definition for what they called the \textbf{iterate} of an $A$-function:

\begin{definition}
    Let $A \in \A$ be simple. Then the \textbf{iterate} of $A$, $A^1$, is the $A$-function such that $d \in A^1(n) \Leftrightarrow A(d) \cap A\parens{\frac{n}{d}} = \1$. For $k > 1$, the $k$-fold iterate of $A$, $A^k$, is the $A$-function such that $A^k = (A^{k-1})^1$.
\end{definition}

The \kary $A$-functions $D^k$ of Cohen are thus defined in terms of the $k$-fold iterate. By applying the iterate recursively to an $A$-function $A$, we may obtain a family of $A$-functions $\{A^k\}$ analogous to the \kary $A$-functions of Cohen. General results for arbitrary $A$ have not appeared in the literature.

\begin{remark}
    In using the notation $A(d) \cap A\parens{\frac{n}{d}}$, the authors of \cite{us2} have circumvented an awkward process present in much of the literature, whereby in order to define a new family of $A$-functions it was necessary to first define an analogue of the GCD-function and express which divisors $d$ of $n$ were to be included in the set $A(n)$ on the basis of some condition on this analogue of the GCD-function. (cf. \cite{us1,Cohen2,hauk})
    
    Rather than define entire families of auxiliary GCD-functions whose sole purpose is to serve as indicator functions for membership in $A(n)$ for each $n$, we shall continue in the convention of \cite{us2} and use conditions on specific values of $A$-functions themselves to define new $A$-functions, where needed, as is done above.
\end{remark}

\subsection{Arithmetical Functions}

Denote by $\Ang$ the set of complex-valued arithmetical functions, and if $X$ is some set by $\Ang_X$ the set of $X$-valued arithmetical functions. Typically, $X$ is taken to be a set with algebraic structure allowing for the addition and multiplication of its elements; in our case, we will be utilizing $X = \mathbb{R}$, the set of real numbers in Section \ref{complete}. One may think of $\Ang$ as either a set of arithmetical functions or as the space $\mathbb{C}^{\infty}$ of sequences of complex numbers, the latter being the natural context in which the Dirichlet convolution can be realized.

Throughout this paper we will be employing several arithmetical functions commonly appearing in the literature, including the Euler totient function $\varphi$, the divisor function $\tau$, and the \Mobius function $\mu$. Let $\gamma(n)$ be the \textbf{multiplicative core} of $n$, i.e., \[\gamma(n) = \prod_{p \divi n}p.\] We shall also make use of the function $u$, where $u(n) = 1$ for all $n$, the function $I$, where $I(n) = n$ for all $n$, and the function $\iota$, the identity under the Dirichlet convolution, where

\[\iota(n) = \begin{cases}1&\text{ if }n=1 \\ 0&\text{otherwise.} \end{cases}\] 

If $A$ is simple and reflexive, a result of Narkiewicz states that $\iota$ is the unique identity element with respect to the $A$-convolution $*_A$ in the sense that for all $f \in \Ang$,

\begin{equation}
    \iota * f = f*\iota = f.   
\end{equation}

Narkiewicz \cite{nark} also remarked that for all simple and reflexive $A$-functions, the corresponding $A$-convolution is such that every arithmetical function $f$ with $f(1) \neq 0$ possesses a unique left inverse and a unique right inverse with respect to the $A$-convolution $*_A$; that is, for $f \in \Ang$ there is a unique $f^r$ and $f^l$ in $\Ang$ such that

\begin{equation}
    \begin{aligned}
        f *_A f^r = \iota \\ f^l *_A f = \iota.
    \end{aligned}
\end{equation}

In the case where $A$ is symmetric, $*_A$ is commutative and hence the unique left and right inverses of a function coincide.

In keeping with the notation of \cite{us3}, for a simple and reflexive $A$-function $A$, we define $\mu_A$ to be the left inverse of $u$ under the $A$-convolution $*_A$. Note that this generalizes the definition of the usual \Mobius function to all applicable $A$. In this paper, $\mu_A$ will only appear in the context of symmetric $A$-functions, so the specification of $\mu_A$ as the right inverse is redundant, as in this case there will be only one \Mobius function corresponding to $A$.

We let $\tau_A$ denote the $A$-convolution of $u$ with itself. Combinatorically speaking, $\tau_A$ counts the number of ``$A$-divisors'' of $n$, which is just the size of the set $A(n)$.

Just as $A$-functions can belong to a given class, generalizing the concept of multiplicativity of $A$-functions, the conept of multiplicativity of arithmetical functions has been generalized on several occasions. Recall that an arithmetical function $f$ is \textbf{multiplicative} if $f$ is not identically zero and $f(mn) = f(m)f(n)$ whenever $(m,n) = 1$, which is to say whenever $m \in D^1(mn)$. Yocom \cite{yocom} called an arithmetical function $A$-multiplicative, for a regular $A$-function $A$, if $f$ is not identically zero and $m \in A(mn) \Rightarrow f(mn) = f(m)f(n)$. Burnett, Osterman, and Lewandowski \cite{us4} defined an arithmetical function $f$ to be \textbf{class-$A$} for an arbitrary $A$-function $A$ if $f$ is not identically zero and $m \in A(mn) \Rightarrow f(mn) = f(m)f(n)$. We then refer to $A$ as an \textbf{arithmetical class} of $f$. As this is a more generalized version of the concept of multiplicativity, we shall adopt the notation of \cite{us4} for our present work. Thus, we denote by $c(A)$ the set of class-$A$ arithmetical functions. Then $c(D^1)$ is the set of all multiplicative functions and $c(D)$ is the set of all completely multiplicative functions.

\subsection{A metric on $\Ang$}

We reformulate a definition of Schwab and Silberg \cite{schw} for our purposes:

\begin{definition}
    Let $d:\Ang \times \Ang \rightarrow \mathbb{R}$, \begin{equation}\label{eq:metric}
        d(f,g) = 
        \begin{cases}
            0 & \text{ if }f = g \\
            \frac{1}{\min (k \in \N \divi f(k) \neq g(k))} & \text{ if }f \neq g
        \end{cases}
    \end{equation}
\end{definition}

It was shown that \eqref{eq:metric} defines a metric on $\Ang$, and it is not difficult to show that this metric is complete. In Section \ref{Results} we will perform analysis on subsets of $\Ang$ with respect to this metric.

\subsection{Rafts and $A$-rafts}

Suppose $f \in \Ang$ is the Dirichlet convolution of some number of completely multiplicative arithmetical functions and some number of inverses (with respect to the Dirichlet convolution); that is, \begin{equation}\label{eq:raft}
    f = f_1 * \cdots * f_r * (f_{r+1})^{-1} * \cdots * (f_{r+s})^{-1},
\end{equation}

where $(f_i)^{-1}$ is the inverse of $f_i$ under the Dirichlet convolution. Functions of this form have traditionally been called \textbf{rational arithmetical functions}, and were introduced by Vaidyanathaswamy \cite{vaid}. Because of the unwieldiness of this expression, and since we intend to express generalizations thereof, we shall abbreviate a \textbf{r}ational \textbf{a}rithmetical \textbf{f}unc\textbf{t}ion to be, simply, a \textbf{raft}. 

Suppose instead of being completely multiplicative (i.e., class-$D$), we restrict the $f_i$ in equation \eqref{eq:raft} to be class-$A$ for some simple, symmetric, and reflexive $A \in \A$, and we similarly replace the usual Dirichlet convolution with the $A$-convolution:

\begin{equation}\label{eq:araft}
    f = f_1 *_A \cdots *_A f_r *_A (f_{r+1})^{-1} *_A \cdots *_A (f_{r+s})^{-1},
\end{equation}

where $(f_i)^{-1}$ is the inverse of $f_i$ under the $A$-convolution. Then we refer to $f$ as an \textbf{$A$-raft}. Typically, if $s = 0$ a raft (resp. $A$-raft) is called \textbf{$r$-ic} \cite{carroll:1975:raft} (resp. \textbf{$A$-$r$-ic}).

$A$-rafts have been studied in the literature before in the context of $A$ being a regular $A$-function; see \cite{hauk2} for a derivation of some relevant analogues of rafts in the context of $A$-rafts. For the purpose of this paper, $A$ will always be chosen in such a way that $*_A$ is associative, justifying the lack of parentheses in \eqref{eq:araft}. 

Corresponding to each $A$-function we may define $\Rat(A)$ to be the set of all $A$-rafts. We shall show in Section \ref{Results} that under certain conditions, $\Rat(A) = c(B)$ for an appropriately chosen $B$.

For further properties and identities involving rafts, see \cite{lao}.

\subsection{The infinitary $A$-function $\T$}

The infinitary $A$-function of Cohen, which we shall denote by $\T$ (keeping with the notation of \cite{us2,us4}), is of central importance to this work and is more difficult to define than $D$ and $D^1$, the other two $A$-functions that we will commonly reference.

There are many equivalent ways of defining $\T$ -- see \cite{us2,us4,Cohen2,Cohen3,LS} for a variety of ways $\T$ can be defined. We will discuss here a new equivalent way to define $\T$.

\begin{proposition}\label{infinitary}
    Suppose $S \subseteq \N$ is a subset of $\N$ such that every integer $n \in \N$ may be represented uniquely as a product of all the elements of some finite subset $S_n \subseteq S$. Then if $S' := \{p^{2^x} \divi p \in \Primes, x \in \NatZero\}$, then $S = S'$.
\end{proposition}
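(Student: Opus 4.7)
The plan is to prove the statement by strong induction on $n$, establishing at each step the equivalence $n \in S \iff n \in S'$. The key observation is that $S'$ already enjoys the unique-product property: writing $n = \prod_i p_i^{a_i}$ and expanding each exponent in binary as $a_i = \sum_j 2^{e_{i,j}}$ gives $n = \prod_{i,j} p_i^{2^{e_{i,j}}}$, a product of distinct elements of $S'$, and this expression is unique by unique factorization in $\N$ combined with uniqueness of binary expansions. This decomposition is the lever used throughout.

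The base case $n = 1$ is handled by the empty-product convention: the empty subset represents $1$, and were $1 \in S$ we would obtain a second representation $\{1\}$, contradicting uniqueness; trivially $1 \notin S'$ since every element of $S'$ exceeds $1$. For the inductive step, fix $n > 1$, assume the claim on $[1,n-1]$ so that $S \cap [1,n-1] = S' \cap [1,n-1]$, and split on whether $n \in S'$.

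If $n \notin S'$, the binary decomposition above writes $n$ as a product of at least two distinct elements of $S'$, each strictly smaller than $n$ and therefore (by the inductive hypothesis) in $S$. This yields an $S$-representation of $n$ that avoids $n$ itself, so uniqueness forces $n \notin S$. If instead $n = p^{2^x} \in S'$, then any element of $S$ appearing in an $S$-representation of $n$ must be a divisor of $n$, hence a power of $p$; by the inductive hypothesis, those strictly below $n$ are exactly $\{p^{2^j} : 0 \leq j < x\}$. The largest product of any subset of this set is $p^{2^0 + 2^1 + \cdots + 2^{x-1}} = p^{2^x - 1} < n$, so no $S$-representation of $n$ can avoid $n$ itself, and therefore $n \in S$.

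The argument has no genuine obstacle once the right framing is found; the main conceptual step is recognizing that the prime-power exponents permitted in $S$ are governed by binary expansions, so that the shared inductive hypothesis $S \cap [1,n-1] = S' \cap [1,n-1]$ lets both directions of the equivalence be read off from the same binary decomposition. The edge case $x = 0$ (where $\{p^{2^j} : 0 \leq j < x\}$ is empty and the maximum product is the empty product $1 < p$) and the base case $n = 1$ require only commitment to the empty-product convention.
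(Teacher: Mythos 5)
Your proof is correct and rests on the same two ingredients as the paper's: the binary expansion of prime-power exponents to show that $S'$ already represents every $n$ uniquely, and the observation that the product of all admissible elements below $p^{2^x}$ falls short of $p^{2^x}$, forcing each element of $S'$ into $S$. You merely package the two containments into a single strong induction with the invariant $S \cap [1,n-1] = S' \cap [1,n-1]$, which is a somewhat tighter organization of the same argument (the paper's ``continuing in this manner'' and its terse deduction of $S \subseteq S'$ are made explicit by your inductive frame), but not a genuinely different route.
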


\begin{proof}
    Clearly $1 \notin S$ and $\Primes \subseteq S$, as every prime number must be represented as itself. Furthermore, $p^2 \in S$ for all primes $p$, since there is no non-singleton subset of $\N$, the product of whose elements is the representation of $p^2$ as $p \cdot p$. $p^3$ is represented uniquely as $p \cdot p^2$, and $p^4$ must represent itself by the same reasoning as for $p^2$. Continuing in this manner, we can see that $S' \subseteq S$.
    
    To see that $S \subseteq S'$, note that every integer $n \in \N$ may be represented uniquely as a product of elements of $S'$ in the following manner:
    
    \begin{enumerate}
        \item First, decompose $n$ according to its prime decomposition: \[n = p_1^{a_1} \cdots p_l^{a_l}\]
        \item Then, write each $a_i$ in terms of its binary representation as powers of 2: \[a_i = \sum_{j = 0}^{\infty} b_{i,j}2^j,\] where $b_{i,j} \in \{0,1\}$.
        \item Then 
        \begin{equation}
            n = p_1^{b_{1,0}2^0 + b_{1,1}2^1 + \dots} \cdots p_k^{b_{l,0}2^0 + b_{l,1}2^1 + \dots} = q_1 \cdots q_r,
        \end{equation}
        where each $q_k$ is one of the $p_i^{b_{i,j}2^j}$ for which $b_{i,j} = 1$.
    \end{enumerate}
    
    Hence, $S = S'$ and we are done.
\end{proof}

The set $S$ may then be identified with the $I$-components of Cohen \cite{Cohen2,Cohen3} and hence with the $\T$-primitive elements $\Primes_{\T}$ as mentioned in \cite{us2}. We may use $S$ to define $\T$ as follows:

\begin{definition}
    For all $n \in \N$, we say that $d \in \T(n)$ if $d$ may be represented as a product of some subset of the elements in $S_n$.
\end{definition}

Observe that if $d \in \T(n)$ then $\T(d) \subseteq \T(n)$ as $S_d \subseteq S_n$. Since $S_{\frac{n}{d}} = S \setminus S_d$, it follows that $d \in \T(n) \Leftrightarrow \T(d) \cap \T\parens{\frac{n}{d}} = \1$. This tells us that the iterate of $\T$ is $\T$, a fact observed in \cite{us2}.

\subsection{The regular $A$-functions}\label{Regular}

Narkiewicz \cite{nark} formally introduced the concept of $A$-functions for the purpose of analyzing a specific class of $A$-functions called \textbf{regular} $A$-functions.

Narkiewicz defined an $A$-function to be regular if the following conditions hold:

\begin{enumerate}
    \item The ring $(\Ang,+,*_A)$ is associative, commutative, and possesses a unit element.
    \item The conovlution $*_A$ preserves multiplicativity of arithmetical functions.
    \item For all prime powers $p^a$, $\mu_A(p^a) \in \{0,-1\}$.
\end{enumerate}

Regular $A$-functions defined in this manner are difficult to work with. Fortunately, Narkiewicz and others \cite{us3,nark,toth} have developed more workable definitions, which we state as follows:

\begin{proposition}\label{reg}
    The $A$-function $A$ is regular if and only if
    \begin{enumerate}
        \item $A$ is multiplicative.
        \item For every $p^a$ there is a unique integer $r_A(p^a)$ such that $A(p^a) = \{1,p^{r_A(p^a)},p^{2 \cdot r_A(p^a)}, \dots, p^a\}$, and if $p^b \in A(p^a)$ then $r_A(p^b) = r_A(p^a)$.
    \end{enumerate}
\end{proposition}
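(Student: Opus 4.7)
The plan is to prove the biconditional in two directions, with each direction reducing the problem to the prime-power case via multiplicativity.

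For the forward direction, I assume $A$ is regular in the sense of Narkiewicz's three conditions. First I would deduce that $A$ is symmetric from commutativity of $*_A$: testing $(f *_A g)(n) = (g *_A f)(n)$ with $f$ the indicator of a single divisor and $g = u$ forces $d \in A(n) \Leftrightarrow \frac{n}{d} \in A(n)$. Associativity of $*_A$ similarly forces the compatibility identity that $d \in A(n)$ and $e \in A(d)$ together imply $de \in A(n)$ and $d \in A(de)$, which in particular yields transitivity of $\divi_A$. Multiplicativity of $A$ itself is extracted from the hypothesis that $*_A$ preserves multiplicativity: choosing multiplicative test functions whose supports probe individual prime-power components of $mn$ with $(m,n) = 1$, the factorization $(f*_A g)(mn) = (f*_A g)(m)(f*_A g)(n)$ forces $A(mn) = A(m) \cdot A(n)$.

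Having reduced to prime powers, I would combine symmetry, transitivity, multiplicativity, and the constraint $\mu_A(p^a) \in \{0,-1\}$ with the identity $\sum_{p^b \in A(p^a)} \mu_A(p^b) = 0$ (for $a \geq 1$) to argue inductively on $a$ that the sorted exponents $0 = b_0 < b_1 < \cdots < b_k = a$ in $A(p^a)$ form an arithmetic progression with common difference $r := b_1$. The key observation is that $1 + \sum_{i \geq 1} \mu_A(p^{b_i}) = 0$ with each summand in $\{0,-1\}$ forces exactly one summand to equal $-1$; transitivity makes these values consistent across different $a$, and the symmetry $b_i + b_{k-i} = a$ forces every $b_i$ to be a multiple of $r$.

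For the reverse direction, I assume conditions (1)--(2) of the proposition. Symmetry of $A$ on $p^a$ is immediate from the chain form, and the hypothesis $r_A(p^b) = r_A(p^a)$ when $p^b \in A(p^a)$ yields $A(p^b) \subseteq A(p^a)$, i.e., transitivity on prime powers; multiplicativity then lifts both to general $n$. Symmetry gives commutativity of $*_A$, and the transitivity-flavored compatibility condition gives associativity by reindexing the triple sum defining $((f*_A g) *_A h)(n)$. Preservation of multiplicativity by $*_A$ follows from a routine bilinear computation using multiplicativity of $A$. Finally, on $p^{jr}$ with $A(p^{jr}) = \{1,p^r,\ldots,p^{jr}\}$, induction on $j$ via $\sum_{i=0}^{j} \mu_A(p^{ir}) = 0$ yields $\mu_A(p^r) = -1$ and $\mu_A(p^{jr}) = 0$ for $j \geq 2$, confirming $\mu_A(p^a) \in \{0,-1\}$.

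The main obstacle is the forward direction's derivation of the arithmetic-progression structure of $A(p^a)$ from $\mu_A(p^a) \in \{0,-1\}$: the inductive bookkeeping is delicate because one must simultaneously track Möbius values at all smaller exponents, and a careful use of transitivity is needed to argue that the candidate smallest step $r$ truly divides $a$ and generates the entire chain, with $r$ consistent across varying $a$. A secondary subtlety is the rigorous passage from ``$*_A$ preserves multiplicativity'' to ``$A$ itself is multiplicative'', where one must exhibit enough multiplicative test functions to probe the structure of $A(mn)$ for coprime $m,n$.
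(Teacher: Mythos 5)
The paper does not actually prove Proposition \ref{reg}: it is quoted as a known characterization and attributed to Narkiewicz and others (\cite{nark,us3,toth}), so there is no in-paper argument to compare against. Your outline is, in substance, a reconstruction of Narkiewicz's original proof, and the overall architecture (reduce to prime powers via multiplicativity; extract symmetry from commutativity, the divisor-chain compatibility from associativity, and the chain structure from the \Mobius{} constraint) is sound. Two points deserve attention. First, your ``compatibility identity'' is misstated: from $d \in A(n)$ and $e \in A(d)$ one cannot conclude $de \in A(n)$, since $de$ need not even divide $n$. The correct consequence of associativity is exactly the paper's Lemma \ref{narkasso}: $d \in A(m)$ and $m \in A(n)$ hold if and only if $d \in A(n)$ and $\frac{m}{d} \in A(\frac{n}{d})$; transitivity is the forward half of this. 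Second, the step you flag as delicate can be closed without the full associativity condition, using only symmetry, transitivity, and $\mu_A(p^a) \in \{0,-1\}$: writing $E_a$ for the exponent set of $A(p^a)$, the identity $\sum_{c \in E_a} \mu_A(p^c) = 0$ shows each $E_a$ ($a \geq 1$) contains exactly one exponent with \Mobius{} value $-1$, which transitivity identifies as $r := \min(E_b \setminus \{0\})$ uniformly over all $b \in E_a$; then the descent $b \mapsto b - r$, justified by symmetry of $E_b$ (not merely of $E_a$) together with $E_b \subseteq E_a$, forces $E_a = \{0, r, 2r, \dots, a\}$ and rules out any exponent not divisible by $r$. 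Spelling out that descent, and the standard multiplicative-test-function argument for $A(mn) = A(m) \cdot A(n)$, would turn your sketch into a complete proof; the reverse direction as you describe it is routine and correct.
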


For any reflexive $A$-function $A$ and any prime power $p^a$, we may define $r_A(p^a)$ to be the smallest element of $A(p^a)$ larger than 1. This extends the original definition of $r_A$ from \cite{nark} to all $A$-functions relevant to this paper.

Examples of regular $A$-functions include $D$ and $D^1$, whose properties are well-known, the cross-convolutions of \Toth \cite{toth}, and the so-called ternary $A$-function (not to be confused with the tri-unitary $A$-function, $D^3$) of \cite{gaw}, defined to be the regular $A$-function such that 

\begin{equation}\label{rapa}
    r_A(p^a) = 
    \begin{cases}
        1 & \text{ if } a \text{ is odd or a multiple of } 4 \\
        2 & \text{ if } a \text{ is twice an odd number}.
    \end{cases}
\end{equation}

Notably, $\T$ is not regular. This fact can be seen any number of ways, the simplest of which is by observing that $\mu_{\T}(p^3) = 1$, which disagrees with the condition that $\mu_A(p^a) \in \{0,-1\}$.

The regular $A$-functions possess special combinatorial properties with respect to the corresponding $A$-convolution. Their main property is best exhibited in the following result, which can be seen, for example, as a consequence of Theorem 3 of \cite{us3}:

\begin{theorem}\label{count}
    Let $\varphi_A := \mu_A *_A I$. Then $\varphi_A(n)$ counts the number of positive integers $k \leq n$ such that $D(k) \cap A(n) = \1$.
\end{theorem}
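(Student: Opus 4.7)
The plan is to manipulate the defining sum for $\varphi_A(n)$ and reduce the theorem to a structural lemma about regular $A$-functions. First, I would write
\begin{equation*}
\varphi_A(n) = \sum_{d \in A(n)} \mu_A(d) \cdot \frac{n}{d} = \sum_{d \in A(n)} \mu_A(d) \cdot \abs{\{k \leq n : d \divi k\}},
\end{equation*}
using that $d \divi n$ implies $n/d = \abs{\{k \leq n : d \divi k\}}$, and then swap the order of summation to obtain
\begin{equation*}
\varphi_A(n) = \sum_{k=1}^{n} \, \sum_{d \in A(n) \cap D(k)} \mu_A(d).
\end{equation*}

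The central claim would then be the following lemma: for each $k \leq n$, there exists $m \divi n$ such that $A(n) \cap D(k) = A(m)$. Granting this, the inner sum becomes $(u *_A \mu_A)(m) = \iota(m)$, using that $*_A$ is associative and commutative with unit $\iota$ for regular $A$. This evaluates to $1$ when $m = 1$ and $0$ otherwise. Since $A$ is simple and reflexive, $A(m) = \{1\}$ holds precisely when $m = 1$, so $\iota(m) = 1$ exactly when $A(n) \cap D(k) = \{1\}$. Summing over $k$ then gives $\varphi_A(n) = \#\{k \leq n : D(k) \cap A(n) = \{1\}\}$, as desired.

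To prove the lemma, I would reduce to prime powers via the multiplicativity of regular $A$-functions (Proposition \ref{reg}(1)). For $n = p^a$ and $k = p^b c$ with $\gcd(c, p) = 1$, the explicit form $A(p^a) = \{1, p^r, p^{2r}, \ldots, p^a\}$ with $r = r_A(p^a)$ from Proposition \ref{reg}(2) gives
\begin{equation*}
A(p^a) \cap D(k) = \{p^{jr} : 0 \leq jr \leq \min(a, b)\}.
\end{equation*}
Setting $j^* = r \cdot \lfloor \min(a, b)/r \rfloor$, the invariance clause $r_A(p^{j^*}) = r_A(p^a) = r$ (valid since $p^{j^*} \in A(p^a)$) ensures that $A(p^{j^*}) = \{1, p^r, \ldots, p^{j^*}\}$, which matches the computed intersection exactly. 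Assembling these prime-wise witnesses via multiplicativity, $m := \prod_{p \divi n} p^{j^*_p}$ then satisfies $A(m) = A(n) \cap D(k)$.

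The principal obstacle is the structural lemma itself. It relies crucially on both the staircase form of $A(p^a)$ and the invariance $r_A(p^b) = r_A(p^a)$ for $p^b \in A(p^a)$ that together characterize regular $A$-functions. For non-regular $A$-functions such as $\T$, the intersection $A(n) \cap D(k)$ need not be of the form $A(m)$ for any $m$ (for instance, $\T(p^3) \cap D(p^2) = \{1, p, p^2\}$, which is not $\T(m)$ for any $m$), so the Möbius inversion step collapses in a distinguishing way that explains why this counting interpretation is specific to the regular class.
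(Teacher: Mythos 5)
Your proof is correct. Note that the paper does not actually supply a proof of Theorem \ref{count}: it records the statement as a consequence of Theorem 3 of \cite{us3}, so your argument is a genuinely self-contained alternative to a citation. The route you take --- expanding $\varphi_A(n)=\sum_{d\in A(n)}\mu_A(d)\,(n/d)$, reading $n/d$ as the number of multiples of $d$ up to $n$, swapping the order of summation, and collapsing the inner sum to $\iota(m)$ via the structural lemma that $A(n)\cap D(k)=A(m)$ for some $m\mid n$ --- is the classical \Mobius-inversion proof for regular convolutions, and the lemma is exactly where regularity enters: the staircase form $A(p^a)=\{1,p^{r},\dots,p^a\}$ together with the invariance $r_A(p^b)=r_A(p^a)$ for $p^b\in A(p^a)$ (Proposition \ref{reg}) forces the truncated intersection to again be a set of the form $A(p^{j^*})$, and multiplicativity glues the prime-power witnesses into a single $m$. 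Two minor points of hygiene: the inner sum is most directly $(\mu_A *_A u)(m)$, which equals $\iota(m)$ by the very definition of $\mu_A$ as the left inverse of $u$, so you need only the symmetry of $A$ (not associativity) to write it in the order $u *_A \mu_A$; and the step $\iota(m)=1\Leftrightarrow m=1\Leftrightarrow A(n)\cap D(k)=\1$ silently uses reflexivity of $A$, which regular $A$-functions possess. Your closing example with $\T(p^3)\cap D(p^2)$ correctly locates why the lemma, and hence this counting interpretation, is special to the regular (more precisely, semi-regular, per Theorem 5 of \cite{us3}) class. What your approach buys is an elementary, verifiable argument inside the paper's own notation; what the paper's citation buys is brevity and the sharper converse that semi-regularity is also necessary.
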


The regular $A$-functions are not the only $A$-functions with this property. In \cite{us3}, the authors introduced a more general class of $A$-function which they called \textbf{semi-regular}. A semi-regular $A$-function satisfies the following conditions:

\begin{enumerate}
    \item $A$ is simple, reflexive, and multiplicative.
    \item $A(p^a)$ contains exactly one $A$-primitive element for all prime powers $p^a$.
    \item $p^{r_A(p^a)}$ is $A$-primitive for all prime powers $p^a$.
\end{enumerate}

Theorem 5 of \cite{us3} guarantees that the semi-regular $A$-functions are the only multiplicative $A$-functions for which Theorem \ref{count} holds. Although our present work does not deal with semi-regular $A$-functions in particular, we will point to one example in particular of a semi-regular $A$-function with interesting properties in Section \ref{complete}.

We introduce the following new characterization of regular $A$-functions:

\begin{theorem}
    For $A \in \A$, $A$ is regular if and only if $A$ is semi-regular and $*_A$ is associative and commutative.
\end{theorem}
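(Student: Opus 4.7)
The plan is to handle each direction separately. For the forward direction, if $A$ is regular then by Narkiewicz's original definition the ring $(\Ang, +, *_A)$ is associative and commutative, so $*_A$ is associative and commutative. Semi-regularity then reduces to identifying the $A$-primitive elements of $A(p^a)$: by Proposition \ref{reg}, $A(p^a) = \{1, p^r, p^{2r}, \ldots, p^a\}$ with $r = r_A(p^a)$ and $r_A(p^b) = r$ for every $p^b \in A(p^a)$, so for each such $p^b = p^{kr}$ the set $A(p^{kr}) = \{1, p^r, \ldots, p^{kr}\}$ has $k+1$ elements, and equals the two-element set exactly when $k = 1$. Hence $p^{r_A(p^a)}$ is the unique $A$-primitive in $A(p^a)$, completing semi-regularity.

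For the backward direction, suppose $A$ is semi-regular with $*_A$ associative and commutative. I aim to verify the two conditions of Proposition \ref{reg}. Multiplicativity is part of semi-regularity, so the core task is to show $A(p^a) = \{1, p^r, p^{2r}, \ldots, p^a\}$ with $r = r_A(p^a)$, and that $r_A$ is constant on the $A$-divisors of $p^a$. My key observation is that associativity of $*_A$ forces transitivity of $\divi_A$: the identity forced by $((f *_A g) *_A h)(n) = (f *_A (g *_A h))(n)$ for all $f,g,h$ amounts to the equivalence $a' \in A(n),\ b' \in A(n/a') \iff a'b' \in A(n),\ a' \in A(a'b')$, and applying it with $a' = e$, $b' = d/e$ for any $e \in A(d)$ and $d \in A(n)$ yields $e \in A(n)$. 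Transitivity combined with the uniqueness of $A$-primitives then gives $r_A(p^j) = r$ for every $p^j \in A(p^a)$ with $j > 0$, since $p^{r_A(p^j)} \in A(p^j) \subseteq A(p^a)$ is $A$-primitive and must equal the unique $A$-primitive $p^r$.

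The main step is a ladder argument producing the arithmetic progression $\{p^r, p^{2r}, \ldots\}$ inside $A(p^a)$. Starting from $p^r \in A(p^a)$, commutativity of $*_A$ gives symmetry of $A$, so $p^{a-r} \in A(p^a)$; by the previous paragraph $r_A(p^{a-r}) = r$, hence $p^r \in A(p^{a-r})$. The associativity identity at $n = p^a$ with $a' = p^r$ and $b' = p^r$ now yields $p^{2r} \in A(p^a)$. Iterating with the same bootstrapping produces $p^{kr} \in A(p^a)$ whenever $kr \leq a$. Writing $a = qr + s$ with $0 \leq s < r$ and applying symmetry to $p^{qr} \in A(p^a)$ produces $p^s \in A(p^a)$; minimality of $r$ forces $s = 0$, so $r \divi a$. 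Running the same ladder argument inside any $p^j \in A(p^a)$ shows $r_A(p^j) \divi j$, and combined with $r_A(p^j) = r$ this gives $r \divi j$. Hence $A(p^a) \subseteq \{1, p^r, \ldots, p^a\}$, while the ladder provides the reverse inclusion, yielding equality and the second condition of Proposition \ref{reg}.

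The main obstacle is keeping both hypotheses of the associativity identity valid at every rung of the ladder; this is exactly what transitivity plus uniqueness of the $A$-primitive accomplishes, by forcing $r_A$ to take the common value $r$ on all the intermediate prime powers $p^{a-kr}$ that must be invoked to climb.
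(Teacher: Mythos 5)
Your proof is correct and follows essentially the same route as the paper's: commutativity gives symmetry, associativity (via Lemma \ref{narkasso}) gives transitivity, and transitivity together with the uniqueness of the $A$-primitive element pins $r_A$ to the common value $r$ on all $A$-divisors of $p^a$, after which the progression $\{1,p^r,p^{2r},\dots,p^a\}$ is forced. The only cosmetic difference is that you build the progression rung by rung with the associativity condition, whereas the paper runs an induction on the exponent and imports the progression from $A(p^{a_0-r_0})$ wholesale; the substance is the same.
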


To show this, we borrow a very useful lemma from Narkiewicz:

\begin{lemma}[Associativity condition]\label{narkasso}
    Suppose $A \in \A$. Then $*_A$ is associative if and only if the following two conditions are equivalent:
    \begin{enumerate}
        \item $d \in A(m)$ and $m \in A(n)$
        \item $d \in A(n)$ and $\frac{m}{d} \in A\parens{\frac{n}{d}}$.
    \end{enumerate}
\end{lemma}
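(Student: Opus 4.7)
The plan is to reduce associativity to a combinatorial condition on the indexing sets of a double sum. Expanding both triple convolutions via the definition of $*_A$,
\begin{align*}
((f *_A g) *_A h)(n) &= \sum_{m \in A(n)} \sum_{d \in A(m)} f(d)\, g(m/d)\, h(n/m), \\
(f *_A (g *_A h))(n) &= \sum_{d \in A(n)} \sum_{e \in A(n/d)} f(d)\, g(e)\, h(n/(de)),
\end{align*}
and making the substitution $m = de$ in the second expression, both sides become a sum of the common summand $f(d)\, g(m/d)\, h(n/m)$ indexed by pairs $(d, m)$: the left-hand side ranges over pairs satisfying condition $(1)$ of the lemma, while the right-hand side ranges over pairs satisfying condition $(2)$. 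The reverse implication is then immediate, for if conditions $(1)$ and $(2)$ cut out the same set of pairs, the two double sums agree term-by-term for every choice of $f, g, h$.

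For the forward direction I would argue contrapositively. Suppose conditions $(1)$ and $(2)$ are not equivalent, so there exist $n$ and a pair $(d_0, m_0)$ satisfying $(1)$ but not $(2)$, the other case being symmetric. Set the three test functions $f(k) = [k = d_0]$, $g(k) = [k = m_0/d_0]$, and $h(k) = [k = n/m_0]$, which are well-defined on $\N$ since condition $(1)$ ensures $d_0 \mid m_0 \mid n$. In the left-hand double sum, a nonzero contribution requires $d = d_0$, $m/d = m_0/d_0$, and $n/m = n/m_0$, which force $(d, m) = (d_0, m_0)$; since $(d_0, m_0)$ satisfies $(1)$, this lone term contributes $1$. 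In the right-hand double sum, a nonzero contribution similarly forces $d = d_0$ and $e = m_0/d_0$, and picks up precisely the requirement that $d_0 \in A(n)$ and $m_0/d_0 \in A(n/d_0)$, that is, condition $(2)$, which by assumption fails. Hence the two sides differ at $n$, contradicting associativity.

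The substantive step is recognizing that the substitution $m = de$ produces two expressions with the same summand and differing only in their indexing sets; once this is noted, the remainder of the argument is the selection of the delta-type test functions above. The main obstacle is verifying that this bijective re-indexing is genuinely summand-preserving and that the chosen test functions isolate exactly the offending pair $(d_0, m_0)$, with no collisions from other $(d, m)$ that might share coordinates; both of these rest on the fact that the triple $(d_0,\, m_0/d_0,\, n/m_0)$ determines $d_0$ and $m_0$ uniquely.
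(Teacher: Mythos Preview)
Your argument is correct: expanding both triple convolutions, re-indexing the second via $m=de$ so that the two sides share the common summand $f(d)\,g(m/d)\,h(n/m)$ over the index sets cut out by conditions (1) and (2), and then using indicator (delta-type) test functions to separate the sides whenever those index sets differ, is exactly the standard route. The uniqueness check you flag at the end---that the triple $(d_0,\,m_0/d_0,\,n/m_0)$ pins down $(d_0,m_0)$---is the only place one could slip, and you have handled it.

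As for comparison: the paper does not supply its own proof of this lemma. It is explicitly introduced as ``a very useful lemma from Narkiewicz'' and quoted from \cite{nark} without argument. Your proof is essentially the classical one given there, so there is nothing further to contrast.
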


Note that $*_A$ is commutative if and only if $A$ is symmetric (cf. \cite{nark}, 2.ii). We now prove the theorem.

\begin{proof}
    The forward implication is immediate and follows from the definition of regularity in Narkiewicz \cite{nark} and Proposition 1 of \cite{us3}. To see the reverse implication, observe that if we can show condition 2 of Proposition \ref{reg} then our result follows.
    
    To this end, let $p$ be an arbitrary prime. For $a = 1$ and $a = 2$, there is nothing to show. Assume that we have demonstrated condition 2 of Proposition \ref{reg} for all $a < a_0$. Then for $a_0$, let $r_0 := r_A(p^{a_0})$. If $r_0 = a_0$ we are done. Otherwise, $r_0 < a_0$ and so $p^{a_0-r_0} \in A(p^{a_0})$ is a non-trivial statement. Observe that $A$ is transitive (since associativity of $*_A$ implies transitivity), so $A(p^{a_0-r_0}) \subseteq A(p^{a_0})$. Hence, $r_A(p^{a_0-r_0}) = r_0$, and so $\{1,p^{r_0},p^{2r_0},\cdots,p^{a_0}\} \subseteq A(p^{a_0})$. 
    
    To see that no other elements are present in $A(p^{a_0})$, note that any $p^b \in A(p^{a_0})$ such that $b$ is not a multiple of $r_0$ will carry with it (by transitivity) a different $A$-primitive element $p^{r_1} \in A(p^{a_0})$, contradicting semi-regularity. Thus, by induction, we are done.
\end{proof}

Thus, we may think of regular $A$-functions as those multiplicative $A$-functions for which Theorem \ref{count} holds true, whose $A$-convolutions are associative and commutative.

\subsection{The structured $A$-functions}

The structured $A$-functions were first introduced by Burnett and Osterman \cite{us2} as a generalization of the infinitary $A$-function. Originally, two distinct but equivalent constructions were provided. 

First, we must recall several definitions from \cite{us2}, foremost of which is the concept of the Cohen Triangle of $A$ at $p$.

For a multiplicative $A$-function $A$ and a prime number $p$, the Cohen Triangle of $A$ at $p$ is an infinite lower triangular binary array $C_{A,p} = \{c_{ab}\}$, indexed from zero, such that \[c_{ab} = 
\begin{cases}
    1 & p^b \in A(p^a) \\ 0 & \text{ otherwise.}
\end{cases} \]

In general, if $A$ is homogeneous then $C_{A,p_1} = C_{A,p_2}$ for all primes $p_1,p_2 \in \Primes$ (providing an alternative definition for homogeneity), so we refer to \textit{the} Cohen Triangle of $A$ in this case. Just as a cross convolution may be viewed as being formed out of homogeneous $A$-functions, we may view the Cohen Triangle of $A$ for various homogeneous $A$ to be the ``building blocks'' of the Cohen Triangles of cross-convolutions.

The concept of the Cohen Triangle of $A$ was first introduced by Cohen \cite{Cohen2}, who displayed progressively larger upper left submatrices that provided a visualization of the manner in which the \kary divisibility relations ``tended towards'' the infinitary divisibility relation. Any finite lower triangular matrix $\gamma$ consisting entirely of 1's will be called a \textbf{structure}.

\begin{definition}
    Let $A \in \A$ be homogeneous. We say that $A$ is (homogeneously) \textbf{$N$-finitely structured} for $N \in \N$ if there exist structures $\gamma_1,\gamma_2,\dots,\gamma_N$ such that \[C_{A} =  C_{D} \otimes \gamma_N \otimes \cdots \otimes \gamma_2 \otimes \gamma_1,\] where $\otimes$ denotes the Kronecker product of matrices. We allow $N$ to be 0, making $D$ a structured $A$-function.
    
    If, on the other hand, there exists an infinite sequence of structures $\gamma_1,\gamma_2,\cdots$ and an infinite sequence of $A$-functions $A_1,A_2,\dots$ such that
    \[\begin{aligned}
        C_{A} &= C_{A_1} \otimes \gamma_1 \\
        C_{A_1} &= C_{A_2} \otimes \gamma_2 \\
        \vdots
    \end{aligned} \] then we call $A$ (homogeneously) \textbf{infinitely structured}. If $A$ is any cross-convolution of a set of homogeneously finite or infinite structures, we simply call $A$ \textbf{structured}.
\end{definition}

Examples of structured $A$-functions include $D$, $\T$, and the $k$-factorization $A$-functions of Litsyn and Shevelev $\T_k$. In general, every structured $A$-function may be viewed as a cross-convolution of homogeneous structured $A$-functions, each of which is uniquely determined by a so-called \textbf{structure sequence} $\{s_1,s_2,\cdots\}$, which may be finite or infinite, where $s_k$ is the size of the structure $\gamma_k$.

In the remarks following Definition 2.3 of \cite{us2}, the concept of the \textbf{character} of an $B$-primitive element $q$ was introduced for structured $A$-functions $B$. We extend this to general $A$-functions:

\begin{definition}
    If $A \in \A$ is reflexive with $\Primes_A$ the set of $A$-primitive elements, then for each $q \in \Primes_A$ let $\chi_A(q)$ denote the least integer $a$ larger than 1 such that $q^a \in \Primes_A$, or $\infty$ if no such power exists.
\end{definition}

In \cite{us2}, Burnett and Osterman demonstrated

\begin{theorem}[Proposition 2.5 of \cite{us2}]
    If $B$ is a structured $A$-function and $n \in \N$, then there exists a unique decomposition of $n$ into the $B$-primitive elements, \begin{equation}
        n = q_1^{c_1}\cdots q_l^{c_l},
    \end{equation}
    such that $c_i < \chi_A(q_i)$ for all $i$.
\end{theorem}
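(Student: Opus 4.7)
The plan is to reduce to the homogeneous case and then use a mixed-radix representation of exponents. Since $B$ is a cross-convolution of homogeneous structured $A$-functions, both the set $\Primes_B$ and the function $\chi_B$ are determined at each prime by the homogeneous structured $A$-function assigned to that prime, so any candidate decomposition factors across the prime factorization of $n$. Hence it suffices to prove the claim when $n = p^a$ for a single prime $p$, with $B$ replaced by the homogeneous structured $A$-function $\tilde B$ attached to $p$.

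Let $\{s_1, s_2, \ldots\}$ be the structure sequence of $\tilde B$, and set $\rho_0 := p$ and $\rho_k := p^{s_1 s_2 \cdots s_k}$ for $k \geq 1$. The heart of the proof is to show, by induction on the level of the Kronecker product $C_{\tilde B} = \cdots \otimes \gamma_2 \otimes \gamma_1$ (where $\gamma_j$ is the $s_j \times s_j$ lower triangular all-ones matrix), that for every $a \geq 0$ with mixed-radix expansion $a = c_0 + c_1 s_1 + \cdots + c_{k-1} s_1 \cdots s_{k-1}$ satisfying $0 \leq c_i < s_{i+1}$, one has
\[\tilde B(p^a) = \{\rho_0^{d_0} \rho_1^{d_1} \cdots \rho_{k-1}^{d_{k-1}} : 0 \leq d_i \leq c_i \text{ for all } i\}.\]
Two immediate consequences of this row description are (i) $\tilde B(\rho_k) = \{1, \rho_k\}$, so each $\rho_k$ is $\tilde B$-primitive and these exhaust $\Primes_{\tilde B}$ at $p$; and (ii) inspecting $\tilde B(\rho_{k-1}^{j})$ for $1 \leq j < s_k$ shows $\chi_{\tilde B}(\rho_{k-1}) = s_k$.

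With this in hand, the theorem follows from uniqueness of the mixed-radix representation: every $a \geq 0$ admits a unique finite tuple $(c_0, c_1, \ldots)$ with $0 \leq c_i < s_{i+1}$ and $a = \sum_i c_i s_1 \cdots s_i$ (empty product equal to $1$), which rewrites as the desired unique decomposition $p^a = \prod_i \rho_i^{c_i}$ with $c_i < \chi_{\tilde B}(\rho_i)$. Reassembling across the distinct primes of $n$ via the cross-convolution completes the argument. The main obstacle is the combinatorial induction identifying the row $\tilde B(p^a)$ from the Kronecker product decomposition of $C_{\tilde B}$; once that bijective correspondence between rows of the Cohen Triangle and mixed-radix expansions is pinned down, existence and uniqueness of the decomposition are immediate.
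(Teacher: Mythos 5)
The paper does not actually prove this statement---it is imported verbatim as Proposition 2.5 of the cited reference \cite{us2}---so there is no in-paper proof to compare against. Judged on its own, your argument is sound and is the natural one: reduce to a single prime via the cross-convolution structure, identify the row $\tilde B(p^a)$ of the Cohen Triangle with the set of exponents whose mixed-radix digits (radices $s_1,s_2,\dots$) are dominated by those of $a$, read off that the $\tilde B$-primitive powers of $p$ are exactly the $\rho_k=p^{s_1\cdots s_k}$ with $\chi_{\tilde B}(\rho_{k-1})=s_k$, and conclude from uniqueness of mixed-radix representation. Two points are left tacit and should be made explicit. First, the reduction to prime powers needs the observation that every $B$-primitive element is itself a prime power; this follows because a cross-convolution is multiplicative, so $B(mn)\supseteq\{1,m,n,mn\}$ for coprime $m,n>1$. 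Second, the Kronecker-product induction you defer is genuinely the whole content of the proof: you must check the indexing convention $(M\otimes\gamma)_{(is+u)(js+v)}=M_{ij}\gamma_{uv}$, handle the finitely structured case where the outermost factor is $C_D$ (giving a final digit of infinite radix and $\chi(\rho_N)=\infty$), and in the infinitely structured case choose the truncation level $k$ with $a<s_1\cdots s_k$ so that only the $(0,0)$ entry of $C_{A_k}$ is consulted. None of these is difficult, and your statement of the row description is correct as written, so the proposal stands as a complete proof strategy with only routine verification outstanding.
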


The following algebraic properties of structured $A$-functions were also demonstrated (cf., \cite{us2}):

\begin{theorem}
    Suppose $B$ is a structured $A$-function. Then 
    \begin{enumerate}
        \item (Theorem 4.3, \cite{us2}) both $(c(B^1),*_B)$ and $(c(B^1),*_{B^1})$ form Abelian groups.
        \item (Corollary 4.6, \cite{us2}) $*_B$ preserves $c(B)$ if and only if $B = \T$
    \end{enumerate}
\end{theorem}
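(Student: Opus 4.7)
The key structural fact to exploit is the unique $B$-primitive decomposition $n = q_1^{c_1}\cdots q_l^{c_l}$ with $c_i < \chi_B(q_i)$ available for any structured $B$ by the theorem immediately preceding. This makes the class $c(B^1)$ transparent: since $m \in B^1(mn)$ is equivalent to $B(m) \cap B(n) = \1$, and for structured $B$ this in turn is equivalent to $m$ and $n$ drawing their $B$-primitive factors from disjoint subsets of $\Primes_B$, one verifies that $f \in c(B^1)$ if and only if $f$ factors as $f(n) = \prod_i f(q_i^{c_i})$ along the unique decomposition.

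For Part 1, the core computation is closure of $*_B$ on $c(B^1)$. Given $f, g \in c(B^1)$ and $m \in B^1(mn)$, I would expand $(f *_B g)(mn) = \sum_{d \in B(mn)} f(d)\, g(mn/d)$ and exploit the enhanced multiplicativity $B(mn) = B(m) \cdot B(n)$ that holds for structured $B$ whenever $m \in B^1(mn)$: every $d \in B(mn)$ factors uniquely as $d = d_1 d_2$ with $d_1 \in B(m)$ and $d_2 \in B(n)$, so the sum splits into a product $(f *_B g)(m) \cdot (f *_B g)(n)$ after applying the class-$B^1$ property of $f$ and $g$ at the level of $B$-primitive components. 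Closure of $*_{B^1}$ on $c(B^1)$ is the same argument with $B$ replaced throughout by $B^1$, once one checks that $B^1$ is itself multiplicative and symmetric whenever $B$ is. The identity is $\iota$; inverses are built recursively in Narkiewicz's style with the class property transmitted level-by-level along $B$-primitive components. Associativity of both convolutions follows from Lemma \ref{narkasso}, verifiable for structured $B$ from the Cohen-triangle description, and commutativity follows from symmetry of $B$ and hence of $B^1$.

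For Part 2, the backward direction is essentially free: the remark immediately following Proposition \ref{infinitary} observes that the iterate of $\T$ is $\T$, so $c(\T^1) = c(\T)$, and Part 1 applied with $B = \T$ yields that $*_\T$ preserves $c(\T)$. For the forward direction I would argue by contrapositive. If $B \neq \T$ is structured, its structure sequence $(s_1, s_2, \ldots)$ must contain some $s_k \neq 2$, and I would translate this into a concrete failure of $\tau_B = u *_B u$ to lie in $c(B)$: at the relevant prime $p$ and exponent, the divisor count $|B(p^a)|$ fails the multiplicative pattern $|B(p^{a+b})| = |B(p^a)| \cdot |B(p^b)|$ required whenever $p^a \in B(p^{a+b})$. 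Since $u \in c(B)$ always, this witnesses that $*_B$ does not preserve $c(B)$.

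The principal obstacle is the forward direction of Part 2: one must track exactly which multiplicative relations a structure sequence away from $(2,2,2,\ldots)$ forces $\tau_B$ to violate, and rule out any fortuitous cancellation at higher primes. A cleaner conceptual route would be to argue that $c(B)$-preservation forces the fixed-point equation $B = B^1$ at the level of $B$-primitive components, and then invoke the fact that, among structured $A$-functions, the only fixed point of the iterate is $\T$ itself.
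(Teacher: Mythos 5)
This theorem is not proved in the paper at all: it is imported verbatim from \cite{us2} (Theorem 4.3 and Corollary 4.6 there), so there is no in-paper argument to compare yours against. Judged on its own, your outline is viable and hits the right structural points, but be aware of where the real work lives. The hinge of Part 1 is your asserted ``enhanced multiplicativity'' $m \in B^1(mn) \Rightarrow B(mn) = B(m)\cdot B(n)$, i.e.\ $B \in C(B^1)$; this is not free, but it does follow from the machinery of the present paper, since structured implies \Aname (Proposition \ref{RSPfrac}), $m \in B^1(mn)$ forces $\prim_B(m) \cap \prim_B(n) = \emptyset$, and then $B(mn) = \prod_i B(q_i)^{a_i}$ factors along the two disjoint supports. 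You should also make explicit that after writing $d = d_1 d_2$ with $d_1 \in B(m)$, $d_2 \in B(n)$, the step $f(d_1 d_2) = f(d_1)f(d_2)$ needs $d_1 \in B^1(d_1 d_2)$, which again comes from disjointness of supports rather than directly from $f \in c(B^1)$ being ``applied at the level of components''; and that membership of the $*_B$-inverse in $c(B^1)$ requires its own induction, not just closure of the product. For Part 2, the backward direction via $\T^1 = \T$ is exactly right, and your worry about the forward direction is overstated: the obstruction is entirely local to a single prime power, so no cancellation across primes can rescue $\tau_B$. If the first structure $\gamma_k$ with $s_k \neq 2$ occurs at level $k$ (or, for finitely structured $B$, at the residual $C_D$ factor), then with $q$ the relevant $B$-primitive power of $p$ one gets $q \in B(q^2)$ but $\abs{B(q^2)} = 3 \neq \abs{B(q)}^2 = 4$, so $\tau_B = u *_B u \notin c(B)$ while $u \in c(B)$, and $*_B$ fails to preserve $c(B)$. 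The only remaining point is that a structure sequence identically equal to $2$ at every prime characterizes $\T$, which is the definition of $\T$ as a structured $A$-function in \cite{us2}.
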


In Section \ref{Results} we will prove an analogue of Theorem 4.3 of \cite{us2} for a much more general class of $A$-functions.

\subsection{The perfect $A$-functions}

In Theorem 10 of \cite{Cohen3}, Cohen observed that the $A$-function $\T$ has the special property of the infintary convolution $*_{\T}$ preserving $c(\T)$ -- that is, it is an instance of an $A$-function $A$ preserving its own arithmetical class. Corollary 4.6 of \cite{us2} makes it clear that this property is not shared among any other structured $A$-function. Yet, it appears to hold for $D^1$ as well, since the unitary convolution preserves multiplicativity of arithmetical functions (i.e., $*_{D^1}$ preserves $c(D^1)$). Furthermore, it can be shown that $(c(\T),*_{\T},\cdot)$ and $(c(D^1),*_{D^1},\cdot)$ are both rings (here $\cdot$ is usual multiplication).

In Theorem 3 of \cite{us4}, the authors completely characterized all $A$-functions with this property: $A$ is such that $(c(A),*_A,\cdot)$ forms a ring if and only if $A$ is \textbf{perfect}, which is to say that $A \in C(A)$ and $A \subseteq A^1$.

It is furthermore demonstrated that all the rings created in this way are isomorphic to $(\Ang,+,\cdot)$.

Examples of perfect $A$-functions include $D^1$ and $\T$. Corollary 6 of \cite{us4} demonstrated that if $B$ is structured, then $B^1$ is perfect. Interestingly, $D^3$ is perfect, and it is currently unknown if $D^{2k-1}$ is perfect for all $k \in \N$. For an example of a perfect $A$-function which is not multiplicative, Example 7 of \cite{us4} gives us the $A$-function $Z$, where $Z(n) = \{1,n\}$ for all $n$. Note that $c(Z)$ consists of all arithmetical functions $f$ for which $f(1) = 1$.

In \cite{us4}, the authors deduced further results for general $A$-functions. Call a symmetric $A \in \A$ \textbf{split} if $m \in A(mn)$, $a \in A(m)$, and $b \in A(n)$ means $a \in A(ab)$. Then

\begin{theorem}[Lemma 5 of \cite{us4}]
    If $A \in C(A)$ and $A$ is split, then $*_A$ is associative.
\end{theorem}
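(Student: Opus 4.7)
The plan is to verify the associativity criterion of Lemma \ref{narkasso}, namely the equivalence of the two conditions there. First I would record two preliminary consequences of the hypotheses. Since $A$ is simple and symmetric, $1 \in A(n)$ gives $n \in A(n)$, so $A$ is reflexive. And $A$ is already transitive: if $d \in A(m)$ and $m \in A(n)$, symmetry gives $n/m \in A(n)$, so applying $A \in C(A)$ to the factorization $n = m \cdot (n/m)$ yields $A(n) = A(m) \cdot A(n/m)$, whence $d = d \cdot 1 \in A(n)$.

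For the forward direction $(1) \Rightarrow (2)$, suppose $d \in A(m)$ and $m \in A(n)$. Transitivity immediately gives $d \in A(n)$. To obtain $m/d \in A(n/d)$, I would apply the split property with $M := m$, $N := n/m$, $a := m/d$, $b := n/m$: the required hypothesis $M \in A(MN)$ is just $m \in A(n)$; $a = m/d \in A(M)$ by symmetry from $d \in A(m)$; and $b = n/m \in A(N)$ by reflexivity. The conclusion $a \in A(ab)$ then reads $m/d \in A((m/d)(n/m)) = A(n/d)$, as needed.

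For the backward direction $(2) \Rightarrow (1)$, suppose $d \in A(n)$ and $m/d \in A(n/d)$. To derive $m \in A(n)$, I would invoke $A \in C(A)$ on the factorization $n = d \cdot (n/d)$: since $d \in A(n)$, we get $A(n) = A(d) \cdot A(n/d)$, and $m = d \cdot (m/d)$ lies in this product by reflexivity together with the second hypothesis. To derive $d \in A(m)$, I would apply split again, this time with $M := d$, $N := n/d$, $a := d$, $b := m/d$: the required $M \in A(MN)$ is $d \in A(n)$; $a \in A(M)$ is reflexivity; $b \in A(N)$ is the second hypothesis. The conclusion $a \in A(ab)$ reads $d \in A(d \cdot m/d) = A(m)$, as desired.

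The principal creative step --- what passes for an obstacle here --- is spotting how to instantiate the four parameters of the split property in each direction so that its single conclusion $a \in A(ab)$ delivers exactly the non-trivial membership required. Once one notices that the forward direction wants $(M,N,a,b) = (m, n/m, m/d, n/m)$ (giving $ab = n/d$) and the backward direction wants $(M,N,a,b) = (d, n/d, d, m/d)$ (giving $ab = m$), both implications reduce to a single invocation of split, and associativity of $*_A$ follows from Lemma \ref{narkasso}.
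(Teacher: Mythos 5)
Your argument is correct. Note that the paper itself does not prove this statement --- it quotes it as Lemma 5 of \cite{us4} --- so there is no in-paper proof to compare against; your write-up supplies a complete, self-contained derivation via Narkiewicz's associativity criterion (Lemma \ref{narkasso}), which is the natural tool and almost certainly the route taken in the cited source. All the individual steps check out: reflexivity follows from simplicity plus symmetry; transitivity follows from one application of $A \in C(A)$ to the factorization $n = m\cdot(n/m)$ (symmetry is not actually needed there, since the class-$A$ condition only asks that the first factor $m$ be an $A$-divisor of the product, which is the hypothesis $m \in A(n)$); and your two instantiations of the split property, $(M,N,a,b) = (m,\,n/m,\,m/d,\,n/m)$ with $ab = n/d$ for the forward direction and $(M,N,a,b) = (d,\,n/d,\,d,\,m/d)$ with $ab = m$ for the backward direction, each deliver exactly the one nontrivial membership that transitivity or $C(A)$ alone does not give. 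The identification of those instantiations is indeed the only real content of the proof, and you have it right.
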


\subsection{Connections between regular, structured, and perfect $A$-functions}

First, we shall demonstrate a result that reveals to us a paradigm by which we may connect regular, structured, and perfect $A$-functions.

\begin{theorem}
    The only regular and structured $A$-function is $D$. The only structured and perfect $A$-function is $\T$. The only perfect and regular $A$-function is $D^1$.
\end{theorem}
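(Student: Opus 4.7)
All three equalities reduce to prime-by-prime analyses, since regular and structured $A$-functions are multiplicative and the perfect conditions $A \in C(A)$, $A \subseteq A^1$ factor multiplicatively over primes; it therefore suffices to analyze $A(p^a)$ at each prime power. For \emph{regular and perfect}, if $r := r_A(p^a) < a$, then $p^r \in A(p^a)$, and the consistency clause in Proposition~\ref{reg} gives $r_A(p^r) = r_A(p^{a-r}) = r$, so $p^r \in A(p^r) \cap A(p^{a-r})$. The intersection strictly contains $\1$, contradicting $A \subseteq A^1$. Hence $r_A(p^a) = a$ for every prime power, giving $A = D^1$.

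For \emph{regular and structured}, fix $p$ and let $(s_1, s_2, \ldots)$ be the structure sequence of $A$ at $p$. Assuming, after discarding any initial trivial levels, that $s_1 > 1$, the Kronecker product $C_A = C_{A_1} \otimes \gamma_1$ yields $A(p^{s_1}) = \{1, p^{s_1}\}$ and $A(p^{s_1+1}) = \{1, p, p^{s_1}, p^{s_1+1}\}$. For $s_1 > 2$ the latter is not a geometric progression, violating regularity; for $s_1 = 2$, $A(p^2) = \{1, p^2\}$ gives $r_A(p^2) = 2$, but $p^2 \in A(p^3) = \{1, p, p^2, p^3\}$ forces $r_A(p^2) = 1$ by consistency, a contradiction. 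Hence every $s_k = 1$ and $A = D$.

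For \emph{structured and perfect}, fix $p$ with structure sequence $(s_1, s_2, \ldots)$ and set $M_k = s_1 \cdots s_{k-1}$. The mixed-radix expansion coming from the Kronecker factorization of $C_A$ shows that if some $s_k \geq 3$, then $2 M_k$ has digit $2$ in position $k$, so $p^{M_k} \in A(p^{2 M_k})$; since $A(p^{M_k}) = \{1, p^{M_k}\}$, the intersection $A(p^{M_k}) \cap A(p^{M_k})$ strictly contains $\1$, contradicting $A \subseteq A^1$. Hence $s_k \leq 2$ for all $k$, and after dropping trivial levels we may assume $s_k = 2$ throughout. If the sequence has finite length $N$, the exponent $2^{N+1}$ sits in the outer $C_D$ part of the product, yielding $A(p^{2^{N+1}}) = \{1, p^{2^N}, p^{2^{N+1}}\}$ and $A(p^{2^N}) = \{1, p^{2^N}\}$, again violating $A \subseteq A^1$. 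So the sequence is infinite with all $s_k = 2$, which is exactly the structure sequence of $\T$ at $p$; since $\T$ is homogeneous, $A = \T$.

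The main obstacle is the Kronecker product bookkeeping in the latter two parts: identifying the witness exponents ($s_1+1$, $2 M_k$, $2^{N+1}$) and reading off the corresponding rows of the Cohen triangle via the mixed-radix digit expansion. The arguments are mechanical once the right exponents are chosen, but the choice itself requires care.
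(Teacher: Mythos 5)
Your proof is correct, but it takes a genuinely different route from the paper's. The paper disposes of all three intersections by citation: the comments following Remark 4.7 of the structured-$A$-functions paper give regular $\cap$ structured $= \{D\}$, Corollary 4.6 of that paper gives structured $\cap$ perfect $= \{\T\}$, and for perfect $\cap$ regular $= \{D^1\}$ it observes that any regular $A \neq D^1$ has $\mu_A(n) = 0$ for some $n$, while perfection forces $\mu_A(n) \in \{-1,1\}$. You instead argue directly from the structural definitions: the geometric-progression form of $A(p^a)$ for regular $A$ together with the consistency clause $p^b \in A(p^a) \Rightarrow r_A(p^b) = r_A(p^a)$, the Kronecker-product form of the Cohen triangle for structured $A$, and the iterate condition $A \subseteq A^1$ with explicit witness exponents ($s_1+1$, $2M_k$, $2^{N+1}$). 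Your version is self-contained and exhibits the mechanism --- in particular it replaces the \Mobius-function argument for the third part with a two-line computation showing $r_A(p^a) < a$ puts $p^{r}$ in $A(p^{r}) \cap A(p^{a-r})$ --- at the cost of length and of some bookkeeping that the paper outsources. One caveat: your handling of the structured cases implicitly assumes the structures $\gamma_k$ have size at least $2$ (you ``discard trivial levels''); as literally stated, the paper's definition of infinitely structured admits an infinite sequence of $1 \times 1$ structures, under which reading every homogeneous $A$-function would be structured and the theorem itself would fail, so this is a defect of the definition rather than of your argument, but it deserves a sentence. You should also say explicitly that multiplicativity of regular and structured $A$-functions is what licenses the prime-by-prime reduction of the condition $A \subseteq A^1$.
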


\begin{proof}
    By the comments following Remark 4.7 of \cite{us2}, no regular $A$-function except $D$ can be structured. 
    
    By Corollary 4.6 in \cite{us2}, $\T$ is the only structured $A$-function $A$ whose $A$-convolution preserves its arithmetical class.
    
    Finally, to see that the only perfect and regular $A$-function is $D^1$, observe that if $A$ is regular and not $D^1$, then there is an $n \in \N$ such that $\mu_A(n) = 0$, but by Corollary 5 of \cite{us4} if $A$ is perfect then $\mu_A(n) \in \{-1,1\}$, so since $D^1$ is the only regular $A$-function with this property, 
\end{proof}

We present the following paradigm for understanding the three families of $A$-functions appearing in the above theorem:

\begin{enumerate}
    \item The regular $A$-functions are those which generalize the properties of $D$ and $D^1$.
    \item The structured $A$-functions are those which generalize the properties of $\T$ and $D$.
    \item The perfect $A$-functions are those which generalize the properties of $D^1$ and $\T$.
\end{enumerate}

In this paper, we will identify a class of $A$-functions to which the regular $A$-functions, the structured $A$-functions, and the perfect $A$-functions belong, but which keeps many relevant properties common to all three of these families. We state these properties below:

\begin{proposition}
    If $A$ is regular, structured, or perfect, then $A$ has the following properties:
    \begin{enumerate}
        \item[\rm(P1)] $A \in C(A)$
        \item[\rm(P2)] When constructing an arbitrary arithmetical function $f \in c(A)$, one may freely choose the value of $f$ at $A$-primitive elements.
        \item[\rm(P3)] Each $n \in \N$ has a unique representation as a product of $A$-primitive elements $n = q_1^{a_1} \cdots q_r^{a_r}$ such that for every $f \in c(A)$, $f(n) = f(q_1)^{a_1} \cdots f(q_r)^{a_r}$.
    \end{enumerate}
\end{proposition}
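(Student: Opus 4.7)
The plan is case analysis on which of the three families $A$ belongs to, verifying (P1)--(P3) in each case by appealing to the characterizations already established.

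For regular $A$, Proposition \ref{reg} gives $A(p^a) = \{1, p^{r_A(p^a)}, \ldots, p^a\}$ with $r_A$ constant on the $A$-divisors of $p^a$, so the $A$-primitive prime powers at $p$ are precisely those $p^r$ with $r_A(p^r) = r$. For (P3), the prime factorization $n = \prod p_i^{a_i}$ rewrites uniquely as $n = \prod (p_i^{r_i})^{a_i/r_i}$ where $p_i^{r_i}$ is $A$-primitive, and multiplicativity of $f \in c(A)$ (forced by $D^1 \subseteq A$ for multiplicative $A$) together with iterated application of the class-$A$ property along the chain $p_i^{r_i}, p_i^{2 r_i}, \ldots$ produces $f(n) = \prod f(p_i^{r_i})^{a_i/r_i}$. (P2) is the uniqueness of this decomposition, and (P1) reduces via multiplicativity to checking $A(p^b) \cdot A(p^{a-b}) = A(p^a)$ whenever $p^b \in A(p^a)$, which is immediate from the arithmetic-progression structure.

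For structured $A$, Proposition 2.5 of \cite{us2} directly supplies the unique $A$-primitive decomposition subject to the character constraints $c_i < \chi_A(q_i)$, handling (P3) and hence (P2). For (P1), the cross-convolution representation reduces matters to the homogeneous case, where the Kronecker product decomposition $C_A = C_{A_1} \otimes \gamma_1$ (iterated finitely or infinitely) forces $A(p^b) \cdot A(p^{a-b}) = A(p^a)$ by unpacking the block structure induced at each level of the Kronecker expansion; multiplicativity of $A$ then extends this identity to arbitrary $n$.

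For perfect $A$, (P1) is immediate from the definition. For (P2) and (P3) we induct on $n$: if $n$ is $A$-primitive the decomposition $n = n$ is trivial, and otherwise we select any nontrivial $d \in A(n)$ and apply the induction hypothesis to $d$ and $n/d$. The containment $A \subseteq A^1$ gives $A(d) \cap A(n/d) = \1$, so no $A$-primitive appears in both sub-decompositions, yielding uniqueness of the combined decomposition; the condition $A \in C(A)$ gives $f(n) = f(d) f(n/d)$ for every $f \in c(A)$, propagating the product formula.

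The main obstacle is expected to be (P1) for general structured $A$-functions: the Kronecker product bookkeeping is somewhat intricate because structure sequences of arbitrary length must be tracked level by level, and the cross-convolution reduction only works cleanly after one first establishes the identity in the homogeneous setting. A secondary subtlety is ensuring that the inductive decomposition in the perfect case is genuinely independent of the choice of intermediate $d \in A(n)$, which is where the $A \subseteq A^1$ hypothesis does its real work.
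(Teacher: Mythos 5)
Your strategy---a direct, case-by-case verification---is genuinely different from what the paper does: the paper states this proposition without an inline proof and discharges it only in Section \ref{complete}, where (P1) follows from Proposition \ref{RSPfrac} together with Theorem \ref{frac}, and (P2)--(P3) follow from the claim that regular, structured, and perfect $A$-functions are complete, combined with Theorem \ref{comp}. Your regular case is essentially sound and arguably more self-contained than the paper's treatment. However, two of your three cases have real gaps.

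First, in the structured case you never actually carry out the Kronecker-product computation that is supposed to yield $A(p^b)\cdot A(p^{a-b}) = A(p^a)$; you flag it as ``intricate'' and leave it there, so (P1) for structured $A$ is asserted rather than proved. Second, and more seriously, the perfect-case uniqueness argument does not close. Fixing one nontrivial $d \in A(n)$ and using $A \subseteq A^1$ to get $A(d) \cap A\parens{\frac{n}{d}} = \1$ shows that the primitive sets of $d$ and $\frac{n}{d}$ are disjoint \emph{for that choice of $d$} (and even this step silently assumes that the primitives occurring in the decomposition of $d$ actually lie in $A(d)$); it does not show that a different choice $d' \in A(n)$, or an unrelated product representation satisfying the product formula for all $f \in c(A)$, yields the same exponents. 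You name this subtlety yourself but do not resolve it. The paper resolves the analogous issue either by the logarithmic linear-independence argument in the proof of Theorem \ref{comp} (which needs free choice at primitives as an input) or by the least-element induction in the proof of Theorem \ref{frac} (which uses only $A \in C(A)$, available here since $A$ is perfect---citing that argument would repair your perfect case). Finally, in all three cases you treat (P2) as a consequence of uniqueness of the decomposition, but (P2) also requires the converse direction: that assigning arbitrary values at the $A$-primitive elements and extending by the product formula produces a function that is genuinely class-$A$. That is precisely the content of the implication from the second to the third condition in Theorem \ref{comp}, and it is not addressed in your sketch.
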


In the next section, we will introduce a very general class of $A$-functions satisfying (P2) and (P3), and then refine that class to exclude $A$-functions not satisfying (P1) as well.

\section{Complete $A$-functions}\label{complete}

\subsection{Complete $A$-functions and the $A$-decomposition}

For the remainder of this paper, we will be considering $A$-functions that are reflexive. Observe that in this case $\Primes_A$ is a countably-infinite set since $\Primes \subseteq \Primes_A$. We introduce the following concept to aid us in understanding class-$A$ arithmetical functions:

\begin{definition}
    Consider the enumeration of $A$-primitive elements in increasing order (i.e. $q_1 = 2$, $q_2 = 3$, etc.). Let $V(A)$ be the set of all real-valued arithmetical functions $f$ such that there exists $g \in c(A)$ with the property that for all $k \in \N$, $log(g(q_k)) = f(k)$. We call $V(A)$ the \textbf{arithmetical span} of $A$.
\end{definition}

In general we will make use of the above enumeration of a set of $A$-primitive elements unless otherwise mentioned. 

Since $\Ang_{\mathbb{R}}$ may be viewed as $\mathbb{R}^{\infty}$, we may discuss $V(A)$ in terms of its properties relating to this vector space:

\begin{proposition}
    $V(A)$ is a vector subspace of $\mathbb{C}^{\infty}$.
\end{proposition}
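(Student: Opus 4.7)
The plan is to verify the three subspace axioms for $V(A)$ directly: membership of the zero function, closure under addition, and closure under scalar multiplication. In each case the strategy is to produce, from the given witness(es) $g$ (or $g_1, g_2$) in $c(A)$, an explicit new function in $c(A)$ which witnesses the desired element of $V(A)$.

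For the zero function, I would take $g := u$, the constant arithmetical function $u(n) = 1$. Since $u(mn) = 1 = u(m)u(n)$ unconditionally and $u$ is not identically zero, $u \in c(A)$ for every $A$-function $A$; and $\log u(q_k) = 0$ for all $k$, so the zero sequence lies in $V(A)$. For closure under addition, suppose $f_1, f_2 \in V(A)$ with witnesses $g_1, g_2 \in c(A)$. I would consider the pointwise product $h(n) := g_1(n) g_2(n)$. Note that $h(1) = 1$, so $h$ is not identically zero, and whenever $m \in A(mn)$ the class-$A$ property of each $g_i$ gives
\[
h(mn) = g_1(mn)g_2(mn) = g_1(m)g_1(n)g_2(m)g_2(n) = h(m)h(n),
\]
so $h \in c(A)$. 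Since $\log h(q_k) = \log g_1(q_k) + \log g_2(q_k) = f_1(k) + f_2(k)$, the function $f_1 + f_2$ belongs to $V(A)$.

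For closure under scalar multiplication by $c$, let $f \in V(A)$ have witness $g \in c(A)$, and define $h(n) := g(n)^c$, interpreted via the principal branch of the complex logarithm (with $h(n) := 0$ when $g(n) = 0$). Since $g(q_k) = e^{f(k)}$ is a positive real, $h(q_k) = e^{cf(k)}$ is unambiguous and $\log h(q_k) = cf(k)$. Whenever $m \in A(mn)$, the exponential identity $(g(m)g(n))^c = g(m)^c g(n)^c$ then yields $h(mn) = h(m)h(n)$, so $h \in c(A)$ and $cf \in V(A)$.

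The main subtlety is the handling of complex powers in the scalar-multiplication step: consistent branch selection is required for the identity $(ab)^c = a^c b^c$ to hold at all relevant arguments. If one restricts to real scalars, which is the natural setting given that elements of $V(A)$ are real-valued by definition, this issue disappears entirely and the proposition amounts to saying $V(A)$ is a real subspace of $\Ang_{\mathbb{R}} \subseteq \mathbb{C}^{\infty}$. Alternatively one can sidestep branches completely by working in logarithmic coordinates throughout: the class-$A$ functional equation becomes the additive relation $\log g(mn) = \log g(m) + \log g(n)$ whenever $m \in A(mn)$, and arbitrary real (or complex) linear combinations of such logarithmic profiles remain solutions, giving the subspace property at once.
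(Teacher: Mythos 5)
Your proof is correct and follows essentially the same route as the paper's: the zero function via a constant witness, closure under addition via the pointwise product $g_1g_2 \in c(A)$, and closure under scalar multiplication via $g^c$. Your extra care about branches of the complex power is reasonable but ultimately unnecessary here, since any witness $g$ with $\log g(q_k)$ real is forced (by induction down the $A$-divisors) to be positive real-valued everywhere.
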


\begin{proof}
    Let $A$ be an $A$-function. Since $f(n) \equiv 0$ is in $V(A)$, $V(A)$ is non-empty. Now suppose $f_1$ and $f_2$ are in $V(A)$. Then there is $g_1$ and $g_2$ in $c(A)$ such that $\log(g_i(q_k)) = f_i(k)$. But then $g_1g_2 \in c(A)$ and so $\log(g_1(q_k)g_2(q_k)) = \log(g_1(q_k)) + \log(g_2(q_k)) = f_1(k) + f_2(k)$. Hence, $f_1+f_2 \in V(A)$.
    
    Now suppose $f \in V(A)$ and $c \in \mathbb{R}$. Then if $g$ is the associated arithmetical function in $c(A)$, then $g^c \in c(A)$ as well, so that $\log(g^c(q_k)) = c\log(g(q_k)) = cf(k)$. Hence, $S_A$ is a vector subspace of $\Ang_{\mathbb{R}}$.
\end{proof}

The following defines what will become our most general class of $A$-functions, from which we will refine the aforementioned paradigmatic $A$-functions.

\begin{definition}
    We say that $A$ is \textbf{complete} if $V(A) = \Ang_{\mathbb{R}}$.
\end{definition}

The following weak version of (P2), can be immediately realized:

\begin{proposition}\label{weakP2}
    If $A$ is complete, then when constructing a generic $g \in c(A)$ which is positive and real-valued, we may freely choose the values of $g(q_k)$ independent of all other choices of $g(q_k)$.
\end{proposition}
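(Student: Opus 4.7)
The plan is to observe that the statement is essentially a direct unwrapping of the definitions of $V(A)$ and completeness, with the only nontrivial ingredient being the bijectivity of $\log$ between the positive reals and $\mathbb{R}$. So I would set up the argument as a short, clean translation: take an arbitrary choice of positive real values at the $A$-primitive elements, convert it into a real-valued arithmetical function by taking logarithms, use completeness to lift it back to a class-$A$ function, and then check that the resulting function has the prescribed values.

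More precisely, first I would begin with an arbitrary sequence $(v_k)_{k \in \N}$ of positive real numbers, interpreted as the desired values $g(q_k) = v_k$ at the enumerated $A$-primitive elements $q_1, q_2, \dots$. Then I would define $f \in \Ang_{\mathbb{R}}$ by $f(k) := \log(v_k)$, which is well-defined since each $v_k > 0$. By the hypothesis that $A$ is complete, $V(A) = \Ang_{\mathbb{R}}$, so in particular $f \in V(A)$. Applying the definition of the arithmetical span, there exists $g \in c(A)$ such that $\log(g(q_k)) = f(k) = \log(v_k)$ for every $k \in \N$. Exponentiating and using injectivity of $\log$ on the positive reals, this yields $g(q_k) = v_k$ for all $k$.

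Since the sequence $(v_k)$ was arbitrary, every prescribed independent choice of positive real values at $A$-primitive elements is realized by some $g \in c(A)$. This is exactly the weak form of (P2) claimed in the proposition.

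I do not expect any serious obstacle here. The main subtlety worth flagging is the role of the positivity and real-valuedness hypothesis on $g$: it is precisely what allows the passage through $\log$, and it is also what matches the definition of $V(A)$, which was formulated in terms of $\log(g(q_k))$ taking values in $\mathbb{R}$. Without positivity, one would need a more careful argument (e.g.\ tracking signs or passing to complex logarithms), but in the stated form the proposition is essentially a tautological restatement of completeness.
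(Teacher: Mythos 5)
Your proof is correct and matches the paper's intent exactly: the paper offers no explicit proof, presenting the proposition as an immediate consequence of the definition of $V(A)$ and of completeness, and your argument is precisely that direct unwrapping (prescribe positive values $v_k$, pass to $f(k)=\log v_k \in \Ang_{\mathbb{R}} = V(A)$, and lift back to $g \in c(A)$ with $g(q_k)=v_k$). No gaps.
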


\begin{theorem}\label{comp}
    The following are equivalent:
    \begin{enumerate}
        \item $A$ is complete.
        \item For every $n \in \N$, $n$ may be uniquely represented as a product of $A$-primitive elements $n = q_1^{a_1} \cdots q_l^{a_l}$ in such a way that for every $f \in c(A)$, $f(n) = f(q_1)^{a_1} \cdots f(q_l)^{a_l}$.
        \item For every $f \in \Ang$ (\textbf{complex}-valued arithmetical functions) there is a $g \in c(A)$ such that for all $k \in \N$, $g(q_k) = f(k)$.
    \end{enumerate}
\end{theorem}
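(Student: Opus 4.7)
The plan is to establish the three implications $(3) \Rightarrow (1)$, $(1) \Rightarrow (2)$, and $(2) \Rightarrow (3)$. The implication $(3) \Rightarrow (1)$ is essentially immediate: given any $h \in \Ang_{\mathbb{R}}$, apply $(3)$ to the function $f$ defined by $f(k) := e^{h(k)}$ to obtain $g \in c(A)$ with $g(q_k) = e^{h(k)} > 0$, so $\log g(q_k) = h(k)$ and hence $h \in V(A)$.

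For $(1) \Rightarrow (2)$, I would argue existence and uniqueness of the decomposition separately, by strong induction on $n$. Existence does not require completeness: for $n = 1$ take the empty product, for $A$-primitive $n$ take $n = n^1$, and otherwise choose any $m \in A(n)$ with $1 < m < n$, which exists since $n$ is not $A$-primitive. Because $m \in A(m \cdot (n/m)) = A(n)$, the class-$A$ property gives $f(n) = f(m)f(n/m)$ for every $f \in c(A)$, and applying the inductive hypothesis to $m$ and $n/m$ and concatenating the resulting decompositions yields a decomposition of $n$ with the desired property. Uniqueness is where completeness enters: if $n = \prod_i q_i^{a_i} = \prod_j q_j'^{b_j}$ are two such decompositions, then $\prod_i f(q_i)^{a_i} = \prod_j f(q_j')^{b_j}$ holds for every $f \in c(A)$; taking logarithms and invoking completeness to choose $\log f(q)$ independently over the finite set of primitives involved, the resulting linear identity in arbitrary real parameters forces the two exponent multisets to coincide.

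For $(2) \Rightarrow (3)$, given any $f \in \Ang$, define $g(n) := \prod_i f(k_i)^{a_i}$ using the unique decomposition $n = \prod_i q_{k_i}^{a_i}$ supplied by $(2)$; then $g(q_k) = f(k)$ by construction, and $g$ is not identically zero since $g(1) = 1$. To confirm $g \in c(A)$, suppose $m \in A(mn)$ and form the multiset $S$ that concatenates the decompositions of $m$ and $n$, so $\prod_{(q,a) \in S} q^a = mn$. For any $h \in c(A)$, the class-$A$ relation gives $h(mn) = h(m)h(n)$, and substituting the factorizations of $h(m)$ and $h(n)$ from $(2)$ shows that $S$ itself is a valid decomposition of $mn$ in the sense of $(2)$. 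The uniqueness clause of $(2)$ then identifies $S$ with the canonical decomposition of $mn$, and $g(mn) = g(m)g(n)$ follows directly.

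The main obstacle I expect is the uniqueness half of $(1) \Rightarrow (2)$, which is where the real-vector-space hypothesis $V(A) = \Ang_{\mathbb{R}}$ must be converted into combinatorial independence of primitive exponents. Only positive-real freedom at primitives is guaranteed by completeness, so the logarithmic reduction to a linear identity over $\mathbb{R}$ is the right mechanism; one must verify that only finitely many primitives appear in each decomposition and that the freedom supplied by completeness is enough to isolate each individual exponent. Once this is in hand, the step $(2) \Rightarrow (3)$ reduces to the ``concatenation equals the unique decomposition'' identity described above, which is essentially mechanical.
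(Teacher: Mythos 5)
Your proposal is correct and follows essentially the same route as the paper: the same cycle $(1)\Rightarrow(2)\Rightarrow(3)\Rightarrow(1)$, with existence of the decomposition by strong induction (the paper's Lemma~\ref{rep}), uniqueness via logarithms and the free choice of values at $A$-primitives, and the same construction of $g$ from $f$ for $(2)\Rightarrow(3)$. The only difference is that you explicitly verify $g \in c(A)$ by identifying the concatenated decomposition with the canonical one, a step the paper asserts "by definition"; your version is slightly more careful but not a different argument.
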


Observe that the second property in Theorem \ref{comp} is (P3). In order to prove this, we need the following Lemma, which essentially guarantees that a weak version of the second property of Theorem \ref{comp} holds for \textit{every} reflexive $A$-function.

\begin{lemma}\label{rep}
    Suppose $A \in \A$ is simple and reflexive. Then for every $n \in \N$ there exists a representation of $n$ as a (not necessarily unique) product of $A$-primitive elements $n = q_1^{a_1} \cdots q_l^{a_l}$ such that for every $f \in c(A)$, $f(n) = f(q_1)^{a_1} \cdots f(q_l)^{a_l}$.
\end{lemma}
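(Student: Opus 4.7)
The plan is to proceed by strong induction on $n$. Before starting the induction I would record the standard preliminary fact that every $f \in c(A)$ satisfies $f(1) = 1$: since $f$ is not identically zero, choose $m$ with $f(m) \neq 0$; simplicity of $A$ gives $1 \in A(1 \cdot m)$, so the class-$A$ condition yields $f(m) = f(1) f(m)$, whence $f(1) = 1$. This handles the base case $n = 1$ via the empty product of $A$-primitive elements.

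For the inductive step, I assume the lemma for all positive integers strictly less than $n$ and split into two cases. If $n$ is itself $A$-primitive, the single-factor representation $n = n^{1}$ trivially satisfies $f(n) = f(n)^{1}$ for every $f \in c(A)$ and nothing further is required. Otherwise $n > 1$, and the fact that $A$ is reflexive (so $1, n \in A(n)$) together with non-primitivity of $n$ forces the existence of some $d \in A(n)$ with $1 < d < n$. Writing $n = d \cdot (n/d)$, the membership $d \in A(d \cdot (n/d))$ and the definition of $c(A)$ give $f(n) = f(d) f(n/d)$ for every $f \in c(A)$. The inductive hypothesis applied to $d$ and to $n/d$ produces $A$-primitive decompositions of each that factor correctly through $f$; concatenating these decompositions, and merging like primitive bases by summing their exponents, gives the desired representation of $n$.

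The whole argument is essentially the standard induction that proves existence of prime factorization, transposed into the $A$-divisibility setting, so no deep obstacle is anticipated. The one step that actually uses the hypotheses is the production of the intermediate $A$-divisor $d$ with $1 < d < n$ in the non-primitive case, and this is immediate from the definitions of simple, reflexive, and $A$-primitive. I would also emphasize at the end that the lemma makes no claim of uniqueness, so no compatibility between different valid choices of $d$ needs to be verified; that stronger statement, which corresponds to property (P3) of the preceding proposition, is precisely what will require the additional completeness hypothesis in Theorem \ref{comp}.
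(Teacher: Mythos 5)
Your proof is correct and follows essentially the same strong-induction argument as the paper: handle $1$ by the empty product, handle $A$-primitive $n$ trivially, and otherwise pick $d \in A(n)$ with $1 < d < n$, use the class-$A$ identity $f(n) = f(d)f(n/d)$, and merge the inductive decompositions of $d$ and $n/d$. Your added justifications (that $f(1)=1$ via simplicity, and that a proper nontrivial $A$-divisor exists via reflexivity plus non-primitivity) are details the paper leaves implicit, not a different route.
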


\begin{proof}
    We may ``represent'' 1 as an empty product by convention. For $q \in \Primes_A$ we have $f(q) = f(q)$, establishing our base case. Now assuming we have shown this to be true for all $k < n$, we examine $A(n)$. If $n \in \Primes_A$ then we are done. Otherwise, if $d \in A(n)$ with $1 < d < n$ then for every $f \in c(A)$, $f(d) f(\frac{n}{d}) = f(n)$. But then if $f(d) = f(q_1)^{b_1} \cdots f(q_l)^{b_l}$ and $f(\frac{n}{d}) = f(q_1)^{c_1} \cdots f(q_l)^{c_l}$ then $f(n) = f(q_1)^{a_1} \cdots f(q_l)^{a_l}$ for $a_i = b_i + c_i$, so by induction we are done.
\end{proof}

Now we may prove our first main result.

\begin{proof}
    Suppose $A$ is complete. We will show that the representation of each $n \in \N$ guaranteed by Lemma \ref{rep} is unique. Let $n \in \N$ be arbitrary and suppose $n = q_1^{a_1} \cdots q_l^{a_l} = q_1^{b_1} \cdots q_l^{b_l}$ are two representations of $n$, where we allow some $a_i$ and $b_j$ to be zero if needed in order to maintain identical enumerations on the left and right hand side. Then by Lemma \ref{rep}, for every $g \in c(A)$, $g(q_1)^{a_1} \cdots g(q_l)^{a_l} = g(q_1)^{b_1} \cdots g(q_l)^{b_l}$. Consider the $g \in c(A)$ which are positive real-valued. Then taking logarithms of the above equation gives us 
    \begin{equation}\label{eq:logdep} a_1\log(g(q_1)) + \cdots + a_l\log(g(q_l)) = b_1\log(g(q_1)) + \cdots + b_l\log(g(q_l)),\end{equation} which holds for all positive real-valued $g \in c(A)$.
    
    We may write 
    \begin{equation}
        (a_1-b_1)log(g(q_1)) + \dots + (a_l-b_l)\log(g(q_l)) = 0.
    \end{equation} 
    
    As we range over all positive real-valued $g$, the vectors whose $i^{th}$ components are the values $(a_i-b_i)log(g(q_i))$ span a subspace $\mathbb{R}^l$. If this subspace is non-trivial then there is a $k$ such that $(a_k - b_k) \neq 0$, allowing us to express $log(g(q_k))$ in terms of the other $log(g(q_i))$, $i \neq k$. However, this contradicts Proposition \ref{weakP2}, so $a_i = b_i$ for all $i$ and the representation is unique. 
    

    
    Now assume $A$ is such that we have a unique representation of every $n \in \N$ in the manner described above. Let $f \in \Ang = \mathbb{C}^{\infty}$ be given. Define $g \in \Ang$ on $A$-primitive elements $q_k$ to be $g(q_k) = f(k)$ for all $k \in \N$. Then for $n = q_1^{a_1} \cdots q_l^{a_l}$ the unique representation of $n$, let $g(n) = \prod_{k = 1}^l  g(q_k)^{a_k}$. By definition, this function is in $c(A)$, implying that we may find such a $g$ for every $f \in \Ang$. 
    
    Finally, suppose that $A$ is such that for every $f \in \Ang$ there is $g \in c(A)$ such that $g(q_k) = f(k)$ for all $k \in \N$. Then in particular, this holds for all positive real-valued $g$, and hence $V(A) = \Ang_{\mathbb{R}}$ so $A$ is complete.
\end{proof}

The unique representation of $n$ demonstrated to exist for complete $A$ will be called the \textbf{$A$-decomposition of $n$}. The following are immediate consequences of Theorem \ref{comp}:

\begin{corollary}
    If $A$ is complete, then $A$ satisfies \rm{(P2)} and \rm{(P3)}.
\end{corollary}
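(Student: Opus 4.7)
The plan is to observe that the corollary is essentially an immediate unpacking of Theorem~\ref{comp}, whose three equivalent characterizations of completeness already encode (P2) and (P3) almost verbatim. So rather than reprove anything, I would write the proof as a short deduction from the equivalences just established.

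First I would handle (P3). The second characterization in Theorem~\ref{comp} states that for every $n \in \N$, there is a unique representation $n = q_1^{a_1}\cdots q_l^{a_l}$ into $A$-primitive elements such that $f(n) = f(q_1)^{a_1} \cdots f(q_l)^{a_l}$ for all $f \in c(A)$. This is literally the statement of (P3) (with the minor cosmetic relabeling $l \leftrightarrow r$), so nothing needs to be done beyond citing the theorem.

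Next, for (P2), I would invoke the third characterization in Theorem~\ref{comp}: for every complex-valued arithmetical function $f \in \Ang$ there exists $g \in c(A)$ with $g(q_k) = f(k)$ for all $k$. Since $f$ may be chosen arbitrarily, the values $g(q_k)$ at the $A$-primitive elements may be prescribed freely and independently, which is exactly (P2). (The only subtlety worth flagging is that (P2) is a statement about freedom of choice and the third clause of Theorem~\ref{comp} is a statement about existence of realizing functions; these are interchangeable descriptions of the same freedom.)

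I do not anticipate a main obstacle — the content has been fully absorbed into Theorem~\ref{comp}, and the corollary is essentially a restatement. The only care required is to be explicit that the freedom in (P2) follows from the universal quantifier "for every $f \in \Ang$" in part~3 of the theorem, so that no further algebraic argument is needed.

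\begin{proof}
    Both conclusions follow immediately from Theorem~\ref{comp}. The second equivalent condition in Theorem~\ref{comp} is precisely (P3). For (P2), the third equivalent condition asserts that for every $f \in \Ang$ there exists $g \in c(A)$ with $g(q_k) = f(k)$ for all $k \in \N$; since $f$ is arbitrary, the values of $g$ at the $A$-primitive elements may be prescribed independently, which is exactly (P2).
\end{proof}
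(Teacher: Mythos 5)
Your proposal is correct and matches the paper's own treatment: the paper presents this corollary as an immediate consequence of Theorem~\ref{comp}, having already noted that condition~2 of that theorem is precisely (P3), and your reading of condition~3 as supplying the free choice in (P2) is the intended one. No gap here.
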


\begin{corollary}
    If $A$ is regular, structured, or perfect, then $A$ is complete.
\end{corollary}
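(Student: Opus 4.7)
The plan is to invoke Theorem \ref{comp}, whose equivalence (1)$\Leftrightarrow$(2) identifies completeness of $A$ with property (P3) -- the existence, for each $n \in \N$, of a unique decomposition into $A$-primitive elements that is compatible with every $f \in c(A)$. The proposition immediately preceding this section asserts that regular, structured, and perfect $A$-functions all satisfy (P3); the corollary is therefore immediate from Theorem \ref{comp} once (P3) is verified in each case, and it is this verification that constitutes the real content.

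For structured $B$, the required unique decomposition is exactly the content of Proposition 2.5 of \cite{us2}, quoted as a theorem in the subsection on structured $A$-functions, which decomposes each $n$ uniquely into $B$-primitives $q_1^{c_1} \cdots q_l^{c_l}$ with $c_i < \chi_B(q_i)$. The factorization $f(n) = \prod f(q_i)^{c_i}$ for $f \in c(B)$ follows from $B \in C(B)$ together with the recursive construction. For regular $A$, Proposition \ref{reg} gives $A(p^a) = \{1, p^r, p^{2r}, \dots, p^a\}$ with $r = r_A(p^a)$, and the condition that $r_A(p^b) = r_A(p^a)$ whenever $p^b \in A(p^a)$ shows that $p^r$ is the only $A$-primitive among the $A$-divisors of $p^a$; hence $p^a$ decomposes uniquely as $(p^r)^{a/r}$, and by multiplicativity these prime-power decompositions assemble into a unique $A$-decomposition of every $n$.

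For perfect $A$, existence comes from Lemma \ref{rep}, and uniqueness is proved by induction on $n$. If $n$ is $A$-primitive the decomposition is trivially $\{n\}$; otherwise, for any $d \in A(n)$ with $1 < d < n$, $A \in C(A)$ gives $A(n) = A(d) \cdot A(n/d)$ while $A \subseteq A^1$ forces $A(d) \cap A(n/d) = \{1\}$, so every element of $A(n)$ splits uniquely across $d$ and $n/d$. Applying the inductive hypothesis to $d$ and $n/d$ then yields uniqueness of the $A$-decomposition of $n$.

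The main obstacle lies in the perfect case: perfect $A$-functions need not be multiplicative (the $A$-function $Z$ of \cite{us4} being the standard example), so one cannot lean on prime-factorization or Kronecker-product arguments as in the other two classes. The entire uniqueness statement rests on the combined force of $A \in C(A)$ and $A \subseteq A^1$, and care is required in the induction to match $A$-primitives across distinct recursive constructions, particularly when an $A$-primitive appears with multiplicity.
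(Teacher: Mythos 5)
Your proposal follows the same route as the paper: the corollary is obtained by combining Theorem \ref{comp} (equivalence of completeness with the unique-decomposition property (P3)) with the earlier proposition asserting that regular, structured, and perfect $A$-functions satisfy (P3). You additionally supply the case-by-case verification of (P3) that the paper delegates to that proposition and the cited literature, and your verifications (Proposition 2.5 of \cite{us2} for structured, Proposition \ref{reg} for regular, and the $A \in C(A)$ plus $A \subseteq A^1$ induction for perfect) are consistent with the arguments the paper itself uses elsewhere (e.g.\ in Theorem \ref{frac}).
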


Let us now give some examples of complete $A$-functions, as well as examples of $A$-functions which are not complete.

\begin{example}
    The $A$-functions $D$, $D^1$, and $\T$ are all complete. Every integer $n$ is naturally decomposed as \[\begin{aligned}n &= (p_1)^{a_1} \cdots (p_l)^{a_l} & \quad (p_i \in \Primes_D = \Primes) \\ &= (p_1^{a_1}) \cdots (p_l^{a_l}) & \quad (p_i^{a_i} \in \Primes_{D^1}) \\ &= (p_1^{2^0})^{b_{1,0}} (p_1^{2^1})^{b_{1,1}} \cdots (p_l^{2^0})^{b_{l,0}} (p_1^{2^1})^{b_{l,1}}) \cdots & \quad (p_i^{2^j} \in \Primes_{\T}) \end{aligned},\] and it is well-known that \[f \in c(D) \Rightarrow f(n) = f(p_1)^{a_1} \cdots f(p_l)^{a_l}\] and \[f \in c(D^1) \Rightarrow f(n) = f(p_1^{a_1}) \cdots f(p_l)^{a_l}.\] By Corollary 4 of \cite{us4}, \[f \in c(\T) \Rightarrow f(n) = f(p_1^{2^0})^{b_{1,0}}f(p_1^{2^1})^{b_{1,1}} \cdots f(p_l^{2^0})^{b_{l,0}}f(p_1^{2^1})^{b_{l,1}}),\] where we are able to find $f \in c(A)$ in each case to show that this is unique (for example, $f = I$ for $D$ and $f = \varphi$ for $D^1$). In Section \ref{maxF} we will deduce for any complete $A$-function $A$ that a single function can be found whose ``largest'' arithmetical class is $c(A)$.
\end{example}

\begin{example}
    Any cross-convolution of complete $A$-functions is also complete.
\end{example}

\begin{example}
    For an $A$-function which is not complete, consider the homogeneous $A$-function $A$ defined on prime powers as \[A(p^a) = 
    \begin{cases}
        \{1,p,p^2,p^3,p^4\} & a = 4 \\
        D^1(p^a) & \text{\rm otherwise}
    \end{cases} \]
    Notice that every $f \in c(A)$ must satisfy $f(p^4) = f(p^2)^2 = f(p)f(p^3)$ (since $p^2 \in A(p^4)$ and $p \in A(p^4)$) for every prime $p$, so in particular, even though $p,p^2,p^3 \in \Primes_A$, choosing $f(p)$ and $f(p^3)$ restricts choice of $f(p^2)$. As we will see in Section \ref{maxF}, this excludes $A$ from being regular, structured, or perfect.
    
    However, note that $A$ is associative, indicating that completeness is not a sufficiently general property to capture all associative $A$-functions.
\end{example}

\begin{example}\label{exG}
    To see a non-symmetric complete $A$-function, consider the homoegeneous $A$-function $G$ introduced in \cite{us3}, which is the $A$-function satisfying \[d \in G(n) \Leftrightarrow D(d) \cap \T\parens{\frac{n}{d}} = \1.\] By examining the Cohen Triangle of $G$, one can deduce that
    \begin{enumerate}
        \item The $G$-convolution $*_G$ is associative.
        \item The $G$-decomposition of any integer $n$ is equal the $\T$-decomposition. 
    \end{enumerate}
    Neither of these assertions are immediate. Nevertheless, we leave it to the reader to check these facts.
\end{example}

\subsection{Paradigmatic $A$-functions}

Note that any complete $A$-function $A$ induces an $A$-decomposition of every $n \in \N$, but the $A$-decomposition does not necessarily uniquely determine $A$ (cf. Examaple \ref{exG}).

\begin{definition}
    Suppose $A$ is complete. Then for all $n \in \N$, expressed via the $A$-decomposition $n = q_1^{a_1} \cdots q_l^{a_l}$, $q_k \in \Primes_A$, suppose $A(n) = A(q_1)^{a_1} \cdots A(q_l)^{a_l}$. In such a case we call $A$ \textbf{\Aname}.
\end{definition}

It follows via an inductive argument that \Aname $A$-functions are complete, which relies on the use of a complete $A$-function in their construction. 

We immediately obtain the following:

\begin{proposition}\label{RSPfrac}
    If $A$ is regular, structured, or perfect, then $A$ is \Aname.
\end{proposition}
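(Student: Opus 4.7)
The plan is to verify the set identity $A(n) = A(q_1)^{a_1} \cdots A(q_l)^{a_l}$ in each of the three cases; completeness of $A$ is already supplied by the preceding corollary. In each case the question reduces to a statement about the atomic $A$-primitive factors appearing in the $A$-decomposition of $n$.

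For the regular case, I would first use multiplicativity of $A$ to reduce to prime powers $n = p^a$. Proposition \ref{reg} gives $A(p^a) = \{1, p^r, p^{2r}, \ldots, p^a\}$ with $r = r_A(p^a)$, and the condition that $r_A(p^b) = r$ whenever $p^b \in A(p^a)$ forces $p^r$ to be the unique $A$-primitive power of $p$ involved; the $A$-decomposition of $p^a$ is thus $(p^r)^{a/r}$, and a direct calculation expands $A(p^r)^{a/r}$ to exactly $\{1, p^r, p^{2r}, \ldots, p^a\} = A(p^a)$.

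For the structured case, Proposition 2.5 of \cite{us2} supplies the $A$-decomposition, and multiplicativity again reduces the problem to prime powers. The Kronecker product description $C_{A,p} = C_D \otimes \gamma_N \otimes \cdots \otimes \gamma_1$ (or its infinite variant) factors the Cohen triangle into blocks corresponding exactly to the $A$-primitives appearing in the decomposition of $p^a$, and unpacking this Kronecker structure translates directly into the set product $\prod_i A(q_i)^{c_i}$.

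For the perfect case, I would induct on $n$, with the cases $n = 1$ and $n \in \Primes_A$ immediate. When $n$ is not $A$-primitive, pick $d \in A(n)$ with $1 < d < n$; since $A \in C(A)$, $A(n) = A(d) \cdot A(n/d)$. The induction hypothesis applied to $d$ and $n/d$ (using a common indexing of primitives, with possibly zero exponents) combined with commutativity of the set product gives $A(d) \cdot A(n/d) = \prod_k A(q_k)^{b_k + c_k}$, which matches the claim once one observes that $\prod_k q_k^{b_k + c_k}$ is the $A$-decomposition of $n$ (Lemma \ref{rep} together with uniqueness from completeness). The main obstacle I anticipate is the Kronecker-product bookkeeping in the structured case; the regular case is essentially a direct computation, and the perfect case is a clean induction once $A \in C(A)$ is invoked.
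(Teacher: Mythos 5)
Your proposal is correct and reaches the same endpoint as the paper, but it distributes the work differently. The paper's proof is a single unified observation: in each of the three cases, $d \in A(n)$ if and only if $d = q_1^{b_1} \cdots q_l^{b_l}$ with $0 \leq b_i \leq a_i$, and since each $A(q_i) = \{1, q_i\}$, this set of divisors is literally $A(q_1)^{a_1} \cdots A(q_l)^{a_l}$ --- the three cases are never actually separated. You instead give three distinct arguments: a direct computation from Proposition \ref{reg} for the regular case, a Kronecker-product unpacking for the structured case, and an induction on $n$ using $A \in C(A)$ for the perfect case. The first two are just more explicit versions of what the paper asserts; the real divergence is the perfect case, where your induction essentially reproduces the reverse direction of Theorem \ref{frac} (which the paper proves immediately afterward and in full generality). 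This buys you something: the paper's claim that $A(n)$ has the stated product form is genuinely non-obvious for an abstract perfect $A$-function, and your induction supplies the justification the paper leaves implicit, at the cost of partially duplicating Theorem \ref{frac}. One small point of care: in the regular case you should note that $p^r$ being $A$-primitive follows from the clause $r_A(p^b) = r_A(p^a)$ for $p^b \in A(p^a)$ in Proposition \ref{reg}, and in the perfect case your appeal to Lemma \ref{rep} plus completeness to identify $\prod_k q_k^{b_k+c_k}$ as \emph{the} $A$-decomposition of $n$ is exactly right and should not be skipped.
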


\begin{proof}
    In each case, since $A$ is complete we may decompose each $n \in \N$ via the $A$-decomposition as $n = q_1^{a_1} \cdots q_l^{a_l}$. Note that $A(q_i) = \{1,q_i\}$ since $q_i$ is $A$-primitive. Then regardless of whether $A$ is regular, stuctured, or perfect, $q_1^{b_1} \cdots q_l^{b_l} \in A(n)$ for all $0 \leq b_i \leq a_i$ for all $i$, and $d \in A(n)$ if and only if it is of this form. But the set of all such $d$ may also be written as $A(q_1)^{a_1} \cdots A(q_l)^{a_l}$, so $A$ is \Aname.
\end{proof}

We then have the following main result:

\begin{theorem}\label{frac}
    $A$ is \Aname if and only if $A \in C(A)$.
\end{theorem}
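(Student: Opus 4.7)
My plan is to establish the biconditional by treating each direction separately, with the forward direction following from the paradigmatic structure of $A(mn)$ and the reverse relying on a strong induction on $n$.

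For the forward direction, I will assume $A$ is paradigmatic and $m \in A(mn)$, then show $A(mn) = A(m) \cdot A(n)$. Writing the $A$-decomposition $mn = q_1^{a_1}\cdots q_l^{a_l}$, paradigmaticity gives $A(mn) = \prod_i A(q_i)^{a_i}$. The key intermediate claim is that the $A$-decompositions of $m$ and $n$ use sub-multisets of $\{(q_i,a_i)\}$ that combine to give that of $mn$, namely $m = \prod_i q_i^{b_i}$ and $n = \prod_i q_i^{a_i - b_i}$ for some $0 \le b_i \le a_i$. To see this, fix any $f \in c(A)$; then $m \in A(mn)$ gives $f(mn) = f(m) f(n)$, and applying the paradigmatic expansion to each factor yields an identity between products of values $f(q)^{(\cdot)}$ at $A$-primitives. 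Completeness ensures that the values of $f$ at primitives can be chosen freely, so a linear-independence argument (passing to logarithms) forces the multisets of primitives contributed by $m$ and $n$ to combine to $(q_i, a_i)$. Applying paradigmaticity to $m$ and $n$ individually then yields $A(m) = \prod_i A(q_i)^{b_i}$ and $A(n) = \prod_i A(q_i)^{a_i - b_i}$, and the set-product identity $A(q_i)^{b_i} \cdot A(q_i)^{a_i - b_i} = A(q_i)^{a_i}$ together with commutativity of set products of positive integers gives $A(m) \cdot A(n) = A(mn)$.

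For the reverse direction, I will prove by strong induction on $n$ that every $n$ admits a unique valid $A$-decomposition $n = \prod_i q_i^{a_i}$ satisfying $A(n) = \prod_i A(q_i)^{a_i}$, which simultaneously establishes completeness and the paradigmatic identity. The base cases $n = 1$ and $n \in \Primes_A$ are immediate. For $n > 1$ not primitive, Lemma \ref{rep} provides some valid decomposition via a split $n = d \cdot (n/d)$ with $d \in A(n)$ and $1 < d < n$. Applying $C(A)$ gives $A(n) = A(d) \cdot A(n/d)$, and combining with the inductive hypothesis on $d$ and $n/d$, each with unique paradigmatic decomposition, produces the paradigmatic expression $A(n) = \prod_i A(q_i)^{a_i}$ for the combined decomposition of $n$.

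For uniqueness, I will use a smallest-primitive argument. Suppose $n$ admits two valid decompositions $D_1$ and $D_2$, and let $r$ be the smallest primitive appearing in $D_1$. Since $r \in A(n) = \prod_j A(q'_j)^{a'_j}$ by paradigmaticity of $D_2$, we have $r = \prod_j (q'_j)^{c'_j}$ with $0 \le c'_j \le a'_j$. Because any nontrivial product of integers greater than $1$ is at least the smallest factor, and is strictly greater whenever $\sum_j c'_j \ge 2$, we conclude that $r = q'_{j_0}$ for some single index $j_0$; a symmetric argument shows $r$ is the smallest primitive of $D_2$ as well. Then $C(A)$ yields $A(n) = \{1, r\} \cdot A(n/r)$, and the $c(A)$-identity $f(n) = f(r) f(n/r)$ combined with the paradigmatic expansions shows that $D_1 \setminus \{r\}$ and $D_2 \setminus \{r\}$ are both valid $A$-decompositions of $n/r$. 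By the inductive hypothesis these are equal, hence $D_1 = D_2$. The main obstacle I anticipate is justifying this smallest-primitive argument cleanly; the size estimate must rule out the possibility of the smallest primitive in $D_1$ being a nontrivial product of primitives from $D_2$, and the strict inequality $\prod_j (q'_j)^{c'_j} > \min_j q'_j$ when $\sum_j c'_j \ge 2$ is the crucial step that prevents this.
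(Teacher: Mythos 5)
Your proof is correct and follows essentially the same route as the paper's: the forward direction reads off $A(m)\cdot A(n)=A(mn)$ from the additive behaviour of the exponents in the $A$-decomposition, and the reverse direction is the same strong induction on $n$, splitting along a nontrivial $d\in A(n)$ and using a smallest-$A$-primitive argument for uniqueness. The only real difference is that you make explicit (via logarithms and linear independence over the freely chosen values at $A$-primitives) the step the paper leaves implicit, namely that $m\in A(mn)$ forces the $A$-decompositions of $m$ and $n$ to multiply to that of $mn$.
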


\begin{proof}
    First, notice that by definition, if $A$ is \Aname and $d \in A(n)$, then we may write $n = q_1^{a_1} \cdots q_l^{a_l}$, $d = q_1^{d_1} \cdots q_l^{d_l}$, and $\frac{n}{d} = q_1^{c_1} \cdots q_l^{c_l}$, where $a_k = d_k + c_k$. But then, $A(d) = A(q_1)^{d_1} \cdots A(q_l)^{d_l}$ and $A(\frac{n}{d}) = A(q_1)^{c_1} \cdots A(q_l)^{c_l}$, so we indeed have $A(d) A(\frac{n}{d}) = A(n)$, so $A \in C(A)$.
    
    On the other hand, if $A \in C(A)$ then we proceed by induction on $n$. For any $A$-primitive element $q$, the unique $A$-decomposition is trivially $q = q$, giving us $A(q) = A(q)$. If for all $k < n$, $k = q_1^{k_1} \cdots q_l^{k_l}$, we have deduced a unique $A$-decomposition giving us $A(k) = A(q_1)^{k_1} \cdots A(q_l)^{k_l}$ for all $k < n$, then examine $A(n)$. If $n \in \Primes_A$ then we are done, so assume there is $d \in A(n)$ such that $1 < d < n$. Then $A(d) A(\frac{n}{d}) = A(n)$. If we let $d = q_1^{d_1} \cdots q_l^{d_l}$, $\frac{n}{d} = q_1^{c_1} \cdots q_l^{c_l}$, and define $a_i = d_i + c_i$, then $A(n) = A(d)A(\frac{n}{d}) = A(q_1)^{a_1} \cdots A(q_l)^{a_l}$.
    
    To complete the induction step we claim that $n = q_1^{a_1} \cdots q_l^{a_l}$ is \textit{the} $A$-decomposition of $n$. Suppose $n = q_1^{a_1} \cdots q_l^{a_l} = q_1^{b_1} \cdots q_l^{b_l}$. Let $d$ be the least element of $A(n)$ larger than one, so that $d \in \Primes_A$. Hence, the first $a_i$ and the first $b_i$ that are not zero must occur at the same $i$, say $i_0$. Then observe that $q_{i_0} \cdot q_{i_0}^{a_{i_0}-1} \cdot q_{i_0+1}^{a_{i_0+1}} \cdots q_l^{a_l} = q_{i_0} \cdot q_{i_0}^{b_{i_0}-1} \cdot q_{i_0+1}^{b_{i_0+1}} \cdots q_l^{b_l}$. But then since the $A$-decomposition is unique for all $k < n$, $a_i = b_i$ for all $i$. Hence, we are finished by induction.
\end{proof}

By Proposition \ref{RSPfrac}, the following is immediate.

\begin{corollary}
    If $A$ is regular, structured, or perfect, then $A \in C(A)$.
\end{corollary}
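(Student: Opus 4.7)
The statement to prove is the Corollary that if $A$ is regular, structured, or perfect, then $A \in C(A)$. My plan is to derive it as a one-line consequence of the two results that immediately precede it in the text, namely Proposition \ref{RSPfrac} (which asserts that regular, structured, and perfect $A$-functions are all paradigmatic) and Theorem \ref{frac} (which establishes the equivalence between being paradigmatic and lying in $C(A)$). Concretely, I would write: suppose $A$ is regular, structured, or perfect. By Proposition \ref{RSPfrac}, $A$ is paradigmatic. By the forward direction of Theorem \ref{frac}, any paradigmatic $A$-function satisfies $A \in C(A)$. Hence $A \in C(A)$, as desired.

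There is genuinely no obstacle here, because the hard algebraic content has already been absorbed into the two prior results. The forward direction of Theorem \ref{frac} that I am invoking is precisely the observation that if $d \in A(n)$ with $A$-decomposition $n = q_1^{a_1}\cdots q_l^{a_l}$, $d = q_1^{d_1}\cdots q_l^{d_l}$, $n/d = q_1^{c_1}\cdots q_l^{c_l}$ (with $a_k = d_k + c_k$), then the paradigmatic property applied to $d$, $n/d$, and $n$ gives $A(d)\cdot A(n/d) = A(n)$; combined with the fact that $d \in A(n)$ encodes the coprimality-type condition implicit in class-$A$ membership (as explained in the preliminaries, $d \in A(n)$ plays the role analogous to $d \in D^1(n)$ in defining class-$A$), one gets membership in $C(A)$.

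The only alternative I would consider is a direct, case-by-case verification: checking that regular $A$-functions satisfy $A \in C(A)$ by invoking Proposition \ref{reg} and computing $A(p^a)$-level products on coprime factorizations; checking perfect $A$-functions using the defining condition $A \in C(A)$ already built into the definition; and checking structured $A$-functions by a Kronecker-product argument on Cohen Triangles. This would be far more laborious than the appeal to Theorem \ref{frac}, and would essentially redo the computations that Proposition \ref{RSPfrac} was designed to unify.

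So the proof proposal is essentially a two-citation chain, and the rhetorical job is simply to point out that the corollary is the composition $\text{(regular/structured/perfect)} \Rightarrow \text{paradigmatic} \Rightarrow A \in C(A)$, with the two arrows supplied by Proposition \ref{RSPfrac} and Theorem \ref{frac} respectively. I would present it in a single displayed sentence and let the reader chase the citations.
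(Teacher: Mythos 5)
Your proof is correct and is exactly the paper's argument: the paper states the corollary immediately after Theorem \ref{frac} with the remark that it is immediate from Proposition \ref{RSPfrac}, i.e.\ the same two-citation chain (regular/structured/perfect $\Rightarrow$ paradigmatic $\Rightarrow$ $A \in C(A)$). No further comment is needed.
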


\begin{corollary}
    \rm{(P1)} \textit{implies} \rm{(P2)} \textit{and} \rm{(P3)}.
\end{corollary}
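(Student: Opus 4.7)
The plan is to chain the corollary through the results that have just been established, since all the heavy lifting has already been done. By Theorem \ref{frac}, the condition $A \in C(A)$, which is exactly \rm{(P1)}, is equivalent to $A$ being paradigmatic. It was remarked immediately after the definition of paradigmatic that every paradigmatic $A$-function is complete (this follows by a straightforward inductive unpacking of the $A$-decomposition, using that $A(q_1)^{a_1} \cdots A(q_l)^{a_l}$ is a legitimate set of divisors of $n$). Finally, the corollary following Theorem \ref{comp} states that any complete $A$-function satisfies \rm{(P2)} and \rm{(P3)}.

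Thus the argument is just: (P1) $\Longleftrightarrow$ paradigmatic (Theorem \ref{frac}) $\Longrightarrow$ complete (by the inductive observation in the paragraph after the definition of paradigmatic) $\Longrightarrow$ (P2) and (P3) (preceding corollary). No new calculation is required; the only thing one must be careful about is not circularly invoking Proposition \ref{RSPfrac}, since that proposition used (P1) implicitly via complete $A$-functions. But here the chain goes strictly through Theorem \ref{frac}, which was proved without appeal to the regular/structured/perfect trichotomy.

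I do not expect any real obstacle. The proof is a one-line citation chain. The only subtle point worth spelling out is the implication ``paradigmatic $\Rightarrow$ complete,'' which is mentioned only in passing in the text; it might be worth noting briefly in the proof that if one applies Lemma \ref{rep} together with the defining identity $A(n) = A(q_1)^{a_1} \cdots A(q_l)^{a_l}$, then the uniqueness of the $A$-decomposition (established in the last paragraph of the proof of Theorem \ref{frac}) yields condition 2 of Theorem \ref{comp}, hence completeness.

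\begin{proof}
Assume $A$ satisfies \rm{(P1)}, i.e.\ $A \in C(A)$. By Theorem \ref{frac}, $A$ is paradigmatic. As noted immediately after the definition of paradigmatic $A$-functions, every paradigmatic $A$-function is complete; indeed, the uniqueness of the $A$-decomposition $n = q_1^{a_1} \cdots q_l^{a_l}$ established in the proof of Theorem \ref{frac}, combined with Lemma \ref{rep}, gives exactly condition 2 of Theorem \ref{comp}. Hence $A$ is complete, and so by the corollary following Theorem \ref{comp}, $A$ satisfies \rm{(P2)} and \rm{(P3)}.
\end{proof}
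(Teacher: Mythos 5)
Your proof is correct and follows exactly the same chain as the paper's own one-line argument: (P1) gives paradigmatic via Theorem \ref{frac}, paradigmatic gives complete, and complete gives (P2) and (P3) via the corollary to Theorem \ref{comp}. The extra care you take in justifying the ``paradigmatic implies complete'' step (noting it is really established inside the proof of Theorem \ref{frac}) is a reasonable elaboration but does not change the route.
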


\begin{proof}
    $A \in C(A)$ implies $A$ is \Aname, so $A$ is complete, which implies \rm{(P2)} and \rm{(P3)}.
\end{proof}

\begin{corollary}
    If $A$ is \Aname then $A$ is split.
\end{corollary}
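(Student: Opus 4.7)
The plan is to exploit Theorem \ref{frac} (paradigmatic iff $A \in C(A)$) together with the uniqueness of the $A$-decomposition guaranteed by Theorem \ref{comp}. First I would fix the $A$-decomposition of the given product: write $mn = q_1^{e_1} \cdots q_l^{e_l}$ with $q_i \in \Primes_A$. Since $A$ is paradigmatic, $A(mn) = A(q_1)^{e_1} \cdots A(q_l)^{e_l}$, whose elements are precisely the products $q_1^{c_1} \cdots q_l^{c_l}$ with $0 \leq c_i \leq e_i$. The hypothesis $m \in A(mn)$ then yields exponents $0 \leq m_i \leq e_i$ with $m = q_1^{m_1} \cdots q_l^{m_l}$, and by uniqueness of the $A$-decomposition this must actually \emph{be} the $A$-decomposition of $m$; similarly $n = q_1^{e_1 - m_1} \cdots q_l^{e_l - m_l}$ is the $A$-decomposition of $n$.

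Next I would apply the paradigmatic property to $m$ and $n$ themselves: $A(m) = A(q_1)^{m_1} \cdots A(q_l)^{m_l}$ and $A(n) = A(q_1)^{e_1 - m_1} \cdots A(q_l)^{e_l - m_l}$. So the hypotheses $a \in A(m)$ and $b \in A(n)$ produce $a = q_1^{a_1} \cdots q_l^{a_l}$ with $0 \leq a_i \leq m_i$ and $b = q_1^{b_1} \cdots q_l^{b_l}$ with $0 \leq b_i \leq e_i - m_i$. Multiplying gives $ab = q_1^{a_1+b_1} \cdots q_l^{a_l+b_l}$, a product of $A$-primitive powers which by uniqueness of the $A$-decomposition must be the $A$-decomposition of $ab$. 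One final use of the paradigmatic property yields $A(ab) = A(q_1)^{a_1+b_1} \cdots A(q_l)^{a_l+b_l}$, and since $a_i \leq a_i + b_i$ for every $i$ we immediately see that $a \in A(ab)$, completing the proof.

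The main obstacle is purely bookkeeping: verifying that the exponents $m_i$, $a_i$, $b_i$ extracted from the set-theoretic descriptions of $A(mn)$, $A(m)$, $A(n)$ genuinely coincide with those appearing in the $A$-decompositions of $m$, $a$, $b$ respectively. This is exactly where the uniqueness clause of Theorem \ref{comp} does the work, since any expression of an integer as a product of (distinct powers of) $A$-primitives must agree with its $A$-decomposition. I would also note, in passing, that symmetry of $A$ is never invoked, so the argument applies verbatim to non-symmetric paradigmatic $A$-functions (such as the $G$ of Example \ref{exG}) once the definition of split is extended in the obvious way to drop the symmetry hypothesis.
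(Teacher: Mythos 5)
Your overall strategy is the same as the paper's: decompose $mn$, $m$, $n$, $a$, $b$ along their $A$-decompositions, use the paradigmatic product form of $A(\cdot)$, and read off $a \in A(ab)$ from the componentwise inequality $a_i \leq a_i + b_i$. The gap is in the single step that does all the work. You claim that ``any expression of an integer as a product of (distinct powers of) $A$-primitives must agree with its $A$-decomposition,'' and you use this to conclude that $q_1^{a_1+b_1}\cdots q_l^{a_l+b_l}$ is the $A$-decomposition of $ab$. That principle is false: the uniqueness clause of Theorem \ref{comp} applies only to representations satisfying $f(n) = \prod_i f(q_i)^{a_i}$ for every $f \in c(A)$, not to arbitrary factorizations into $A$-primitives. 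Concretely, for $A = \T$ the integer $p^2$ equals both the single $\T$-primitive $p^2$ and the product $p \cdot p$ of copies of the $\T$-primitive $p$; only the former is the $\T$-decomposition, and your principle would ``prove'' $\T(p^2) = \T(p)^2 = \{1,p,p^2\}$, which is wrong. For the earlier identifications (the exponents of $m$, $a$, $b$) the defect is harmless, since Lemma \ref{rep} together with uniqueness shows that whenever $d \in A(n)$ the $A$-decomposition of $n$ is the product of the $A$-decompositions of $d$ and $n/d$, so you may simply take your exponents to be the ones in the genuine $A$-decompositions.

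That repair is not available for $ab$, because at that point you have exhibited no membership $ab \in A(\cdot)$ to which Lemma \ref{rep} could be applied --- producing such a membership is essentially what the corollary asserts. The paper closes exactly this hole by first noting $ab \in A(mn)$, which is immediate from $a \in A(m)$, $b \in A(n)$ and $A(mn) = A(m)A(n)$ (Theorem \ref{frac}, $A \in C(A)$); this forces the $A$-decomposition of $mn$ to factor through $ab$ and identifies the $A$-decomposition of $ab$ with the product of those of $a$ and $b$, after which the paradigmatic form of $A(ab)$ gives $a \in A(ab)$. Inserting that one observation makes your argument go through; your closing remark that symmetry of $A$ is never invoked is correct.
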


\begin{proof}
    Let $A$ be \Aname and let $m \in A(mn)$, $a \in A(m)$, and $b \in A(n)$. Then $ab \in A(mn)$, and since $A \in C(A)$ we have that $A(m) A(n) = A(a)A(b)A(\frac{m}{a})A(\frac{n}{b}) = A(mn)$. Hence, $ab$ has as its $A$-decomposition the product of the $A$-decomposition of $a$ and $b$, and so since $A$ is fractured, $a \in A(ab)$.
\end{proof}

Of course, this means that we may refine Lemma 5 of \cite{us4}:

\begin{theorem}
    Suppose $A \in C(A)$. Then $*_A$ is associative.
\end{theorem}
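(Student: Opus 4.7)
The plan is to assemble this as an immediate consequence of the three ingredients already developed in this subsection. Specifically, Theorem \ref{frac} tells us that $A \in C(A)$ is equivalent to $A$ being paradigmatic, and the corollary just established shows that every paradigmatic $A$-function is split. Once we know $A$ is split, the hypothesis $A \in C(A)$ together with Lemma 5 of \cite{us4} delivers associativity of $*_A$ directly.

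So concretely I would proceed in three short steps. First, invoke Theorem \ref{frac} to pass from the hypothesis $A \in C(A)$ to the statement that $A$ is paradigmatic. Second, apply the preceding corollary to conclude that $A$ is split: given $m \in A(mn)$, $a \in A(m)$, and $b \in A(n)$, the paradigmatic property yields $A(m)A(n) = A(a)A(b)A(m/a)A(n/b) = A(mn)$, from which one reads off $a \in A(ab)$. Third, cite Lemma 5 of \cite{us4}: since $A$ is both in $C(A)$ and split, $*_A$ is associative. (Symmetry of $A$, which is implicit in the split hypothesis of Lemma 5, can be verified from the paradigmatic description $A(n) = A(q_1)^{a_1}\cdots A(q_l)^{a_l}$, since each factor $A(q_i)=\{1,q_i\}$ is symmetric, and products of symmetric subsets of divisors are symmetric.)

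There is essentially no obstacle to clear, since all the hard work was done in Theorem \ref{frac} and the split corollary. The only point that warrants a line of checking is the symmetry hypothesis hidden inside Lemma 5 of \cite{us4}, which is why I would add the brief observation above that paradigmatic $A$-functions are automatically symmetric: every $d \in A(n)$ has the form $q_1^{d_1}\cdots q_l^{d_l}$ with $0\leq d_i\leq a_i$, and then $n/d = q_1^{a_1-d_1}\cdots q_l^{a_l-d_l}$ is again of this form and hence again lies in $A(n)$. With this remark in place, the theorem follows immediately from the chain
\[
A \in C(A) \;\Longrightarrow\; A \text{ is paradigmatic} \;\Longrightarrow\; A \text{ is split (and symmetric)} \;\Longrightarrow\; *_A \text{ is associative},
\]
and the proof can be stated in just a few lines.
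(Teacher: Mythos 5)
Your proposal is correct and follows exactly the route the paper intends: the paper states this theorem as an immediate refinement of Lemma 5 of \cite{us4}, obtained by chaining Theorem \ref{frac} ($A \in C(A)$ iff $A$ is paradigmatic) with the corollary that paradigmatic $A$-functions are split. Your added verification that paradigmatic $A$-functions are symmetric (needed since the paper defines ``split'' only for symmetric $A$-functions) is a worthwhile detail that the paper leaves implicit, asserting it only in the later summary theorem.
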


The only example appearing in the literature of a non-\Aname $A$-function for which $*_A$ is associative is the $A$-function $G$ from \cite{us3} (cf. Example \ref{exG} above), which demonstrates also that $A$ need not be symmetric in order for $*_A$ to be associative, a fact previously unstated.

We summarize the preliminary results on \Aname $A$-functions in the following:

\begin{theorem}
    If $A$ is \Aname, then:
    \begin{enumerate}
        \item $A \in C(A)$
        \item $A$ is complete.
        \item $*_A$ is associative
        \item $A$ is symmetric
    \end{enumerate}
\end{theorem}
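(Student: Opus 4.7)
The plan is to dispatch items (1)--(3) by appealing to results already proved in the section, and then to establish the symmetry claim (4) directly from the $A$-decomposition.

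First, item (1) is immediate from Theorem \ref{frac}, which provides the biconditional $A$ is \Aname $\Leftrightarrow$ $A \in C(A)$. Item (2) is essentially built into the definition of \Aname: the definition presupposes a unique $A$-decomposition of every $n \in \N$ with respect to $\Primes_A$, which by Theorem \ref{comp} is equivalent to $A$ being complete. Item (3) then follows by combining (1) with the refined version of Lemma 5 of \cite{us4} proved just above the statement, namely that $A \in C(A)$ alone suffices to force $*_A$ to be associative.

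For item (4), the key observation is that for every $q \in \Primes_A$ we have $A(q) = \{1,q\}$ by definition of $A$-primitive element, so $A(q)$ is trivially closed under $d \mapsto q/d$. Given any $n \in \N$ with $A$-decomposition $n = q_1^{a_1} \cdots q_l^{a_l}$, the \Aname property yields
\[A(n) = A(q_1)^{a_1} \cdots A(q_l)^{a_l} = \{q_1^{b_1}\cdots q_l^{b_l} : 0 \leq b_i \leq a_i \text{ for all } i\},\]
where I am using the product-set notation $A(q_i)^{a_i} = A(q_i) \cdot A(q_i) \cdots A(q_i)$ ($a_i$ factors) as employed in the definition of multiplicativity. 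Thus any $d \in A(n)$ takes the form $d = q_1^{b_1}\cdots q_l^{b_l}$ with $0 \leq b_i \leq a_i$, which gives $n/d = q_1^{a_1-b_1}\cdots q_l^{a_l-b_l}$, and since $0 \leq a_i - b_i \leq a_i$ we conclude $n/d \in A(n)$. Hence $A$ is symmetric.

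I do not anticipate any serious obstacle: each step is either a direct citation of a result already in the section or a short manipulation of the $A$-decomposition. The only subtle point worth flagging is the need to interpret $A(q_i)^{a_i}$ correctly as the iterated product-set rather than, say, a pointwise power; but this is consistent with the notation used throughout the preliminaries and in the proof of Theorem \ref{frac}, so no additional justification should be required.
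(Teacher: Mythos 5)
Your proposal is correct and follows essentially the same route the paper intends: the paper states this theorem as a summary with no written proof, and items (1)--(3) are exactly the citations you give (Theorem \ref{frac}, the definition of \Aname{} which presupposes completeness, and the refinement of Lemma 5 of \cite{us4}). Your explicit argument for (4) is the one the paper leaves implicit, and it is sound --- it is the same manipulation of $A(n) = A(q_1)^{a_1}\cdots A(q_l)^{a_l}$ already used in the proof of Proposition \ref{RSPfrac}.
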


Since all regular, structured, or perfect $A$-functions are \Aname, we present examples of $A$-functions that belong to none of these classes, yet are \Aname.

\begin{example}
    Any cross-convolution of \Aname $A$-functions is \Aname. In particular, suppose $A$ is the cross-convolution of $D$, $D^1$, and $\T$, where, say, $A(2^a) = D(2^a)$, $A(p^a) = D^1(p^a)$ if $p \equiv 1$ (mod 4), and $A(p^a) = \T(p^a)$ if $p \equiv 3$ (mod 4). Then $A$ belongs to none of the three families.
\end{example}

\begin{example}
    Let $A$ be the $A$-function which for all $n \in \N$ not a power of 2 is $A(n) = Z(n) = \{1,n\}$, but if $n = 2^a$ is $A(2^a) = D(2^a)$ for all $a \in \N$. Note that $A$ is obviously not multiplicative in this case.
\end{example}

\begin{example}
    To see an example of a homogeneous $A$-function that is not regular, structured, or perfect and yet is \Aname, consider the $A$-function $A$ defined on prime powers via 
    \[A(p^a) = 
    \begin{cases}
        \{1,p^2,p^4\} & a = 4 \\
        \{1,p^2,p^3,p^5\} & a = 5 \\
        \{1,p^2,p^3,p^4,p^5,p^7\} & a = 7 \\
        D^1(p^a) & \text{\rm otherwise}
    \end{cases} \]
    Then this $A$-function is \Aname.
\end{example}

\subsection{The maximal class of an arithmetical function}\label{maxF}

One can notice that, when speaking of arithmetical classes for a given arithmetical function $f$, a natural ordering emerges in which those functions $f \in c(Z)$ (i.e., $f(1) = 1$) constitute a very general class of arithmetical function, whereas the functions $f \in c(D)$ are in a sense the least general class. Indeed, $f \in c(D)$ implies $f \in c(A)$ for all $A$, and in general if $A \subseteq B$ then $c(B) \subseteq c(A)$. We wish to switch the focus from finding an $A$ for which $f \in c(A)$ to finding the maximal $A$ for which this is true.

\begin{definition}
    Let $f \in c(Z)$. Denote by $m(f) := A$ the $A$-function such that $m \in A(mn)$ if and only if $f(mn) = f(m)f(n)$. We call this the \textbf{maximal class of $f$}.
\end{definition}

It is clear from the definition that for any $f \in \Ang$ with $f(1) = 1$, $m(f)$ is well-defined and if $A$ is any $A$-function with $f \in c(A)$, then $A \subseteq m(f)$.

\begin{example}
    Consider the Euler $\varphi$ function. A well-known identity (cf. \cite{apostol}) involving $\varphi$ is \[\varphi(mn) = \varphi(m)\varphi(n) \frac{(m,n)}{\varphi(m,n)}.\] We can immediately see that unless $(m,n) = 1$, then $\varphi(mn) \neq \varphi(m)\varphi(n)$, but of course $(m,n) = 1 \Leftrightarrow m \in D^1(mn)$, so $m(\varphi) = D^1$. Functions of this type were examined in a very general setting by \cite{haukR}
\end{example}

\begin{example}
    Consider the divisor function $\tau$. Then for $m = p_1^{a_1} \cdots p_l^{a_l}$ and $n = p_1^{b_1} \cdots p_l^{b_l}$, $\tau(m)\tau(n) = (a_1 + 1) \cdots (a_l + 1) (b_1 + 1) \cdots (b_l + 1)$, which must be strictly larger than $(a_1+b_1+1) \cdots (a_l+b_l+1) = \tau(mn)$ unless $(m,n) = 1$ (i.e., $a_i > 0 \Rightarrow b_i = 0$), which means $m(\tau) = D^1$ as well.
\end{example}

\begin{example}
    Consider the \Mobius function $\mu$. Then the equation $\mu(mn) = \mu(m)\mu(n)$ can be split into two cases, according to whether or not the left and right hand sides are equal to zero. If they are not equal to zero, then we need $(m,n) = 1$ in addition to $m$ and $n$ being individually square-free.
    
    If $\mu(mn) = 0$, then since $\mu(m)\mu(n) = 0$, one of $m$ or $n$ must not be square-free, and for all such $m$ and $n$, $\mu(mn) = \mu(m)\mu(n)$. So $m(\mu)$ is the $A$-function $A$, where $m \in A(mn)$ if either $m \in D^1(mn)$ for $m$ and $n$ square-free or $m$ or $n$ is not square-free.
    
    Note that $m(\mu)$ is not multiplicative even though $\mu$ is, and in general it can be true that $m(f)$ appears to ``lose'' properties analogous to those that $f$ possesses.
\end{example}

\begin{example}
    Initially, one might think that $m(f)$ ought to be complete for any $f$. Unfortunately, this is not the case. Define 
    \[f(n) := 
    \begin{cases}
        \exp(\pi^n) & n \neq 2,4,8,16\\
        2i & n = 2 \\
        4 & n = 4 \\
        -8i & n = 8 \\
        16 & n = 16.
    \end{cases}
    \]
    It is then clear that
    \[m(f)(n) = 
        \begin{cases}
            Z(n) & n \neq 16 \\
            \{1,2,4,8,16\} & n = 16,
        \end{cases}
    \]
    and yet $m(f)$ is not complete.
\end{example}

\begin{example}
    On the other hand, even if $A$ is complete, it does not mean that $f \in c(A)$ can be found such that $m(f) = A$. Indeed, any complete $A$-function which is not symmetric will lack this property, as $m(f)$ is symmetric by definition.
\end{example}

We do, however, have the following:

\begin{theorem}\label{mf}
    If $A$ is \Aname, then there exists a function $f \in c(A)$ with $m(f) = A$. 
\end{theorem}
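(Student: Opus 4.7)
The plan is to choose $f$ to be a ``generic'' class-$A$ function, one that cannot accidentally factor outside of $A$. Since $A$ is \Aname it is complete, so by Theorem \ref{comp} I may specify $f \in c(A)$ by prescribing its values on $\Primes_A$ arbitrarily and extending via the $A$-decomposition. Concretely, I would take $f(q_k) := p_k$, where $p_1, p_2, \ldots$ are the rational primes in increasing order; the extension is then forced to be $f(n) = \prod_k p_k^{a_k(n)}$, writing $n = \prod_k q_k^{a_k(n)}$ for the $A$-decomposition of $n$. The crucial property of this choice is that $\{\log p_k\}_{k \in \N}$ is $\mathbb{Q}$-linearly independent, which is just a restatement of unique prime factorization.

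The inclusion $A \subseteq m(f)$ is immediate from $f \in c(A)$. For the reverse inclusion $m(f) \subseteq A$, I would start from an arbitrary $d \in m(f)(n)$, so $d \divi n$ and $f(n) = f(d) f(n/d)$. Substituting the formula for $f$ and taking logarithms reduces this to the $\Z$-linear relation
\[
\sum_k \bigl(a_k(n) - a_k(d) - a_k(n/d)\bigr) \log p_k = 0,
\]
which by linear independence forces $a_k(n) = a_k(d) + a_k(n/d)$ for every $k$. Then I would invoke the \Aname identity
\[
A(n) = \prod_k A(q_k)^{a_k(n)} = \prod_k \{1, q_k, q_k^2, \ldots, q_k^{a_k(n)}\}
\]
to conclude that $d = \prod_k q_k^{a_k(d)}$ lies in $A(n)$, since $a_k(d) \leq a_k(n)$.

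The hard part will be nothing dramatic, but requires care in the bookkeeping around the $A$-decomposition: in particular, verifying that whenever $d$ is written as a product $\prod_k q_k^{b_k}$ with non-negative integer exponents the $b_k$ are forced to equal $a_k(d)$ by uniqueness of the $A$-decomposition. This uniqueness, a direct consequence of completeness, is the only nontrivial ingredient needed beyond the paradigmatic factorization formula for $A(n)$, and both ingredients are supplied by the hypothesis on $A$. Everything else is a routine manipulation of the linear independence of $\{\log p_k\}$.
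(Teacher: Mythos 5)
Your proof is correct and follows essentially the same strategy as the paper's: construct a single ``generic'' $f \in c(A)$ by assigning multiplicatively independent values at the $A$-primitives, so that $f(n) = f(d)f(n/d)$ forces the $A$-decomposition exponents to add, whence paradigmaticity gives $d \in A(n)$. The only difference is the witness: the paper takes $f_x(q_k) = \exp(x^k)$ and specializes $x$ to a transcendental number so that the polynomial identity $P_{mn} = P_m + P_n$ can be detected at a single point, whereas you take $f(q_k) = p_k$ and invoke unique prime factorization (equivalently, $\mathbb{Q}$-linear independence of $\{\log p_k\}$); your choice is marginally more elementary and avoids the quantification over all $x$, but the two arguments are otherwise step-for-step the same.
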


\begin{proof}
    Let $A$ be \Aname and define for any real number $x$ the function $f_x \in c(A)$ on $A$-primitive elements to be $f_x(q_k) = \exp(x^k)$. For a given expressed via its $A$-decomposition as $n = q_1^{c_1} \cdots q_l^{c_l}$, it is clear that $f_x(n)$ is the exponential of a polynomial in $x$ for all $x$, dependent only on $n$. Call this polynomial $P_n$. Then $P_n(x) = c_1x^1 + c_2x^2 + \cdots c_lx^l$.
    
    Let $m = q_1^{b_1} \cdots q_l^{b_l}$ and let $mn = q_1^{a_1} \cdots q_l^{a_l}$ be the $A$-decomposition of $m$ and $mn$, respectively. Suppose that $f_x(mn) = f_x(m)f_x(n)$ for all $x \in \mathbb{R}$. Then $log(f_x(mn)) = log(f_x(m)) + log(f_x(n))$, and hence $P_{mn} = P_m + P_n$ (as non-zero polynomials may only possess a finite number of solutions). However, this implies that $a_i = b_i + c_i$ for all $i$, which is to say that the $A$-decomposition of $mn$ is the product of the $A$-decompositions of $m$ and $n$. 
    
    Finally, since $A$ is \Aname it follows that $m \in A(mn)$, as each $b_i$ satisfies $0 \leq b_i \leq a_i$ for all $i$. Choose $x$ to be a non-algebraic real number. Then for all $m,n \in \N$, $P_{mn} = P_m + P_n$ if and only if $P_{mn}(x) = P_m(x) + P_n(x)$, as all such polynomials have coefficients in $\NatZero$. Thus, $m(f_x) = A$.
\end{proof}

An analogous version of our $m(f)$ has appeared in the literature before, in the work of Ryden \cite{ryden}. In that work, for $f \in \Ang$ the set $T(f)$ is defined to be the set of all ordered pairs $(a,b)$ such that $f(ab) = f(a)f(b)$. Clearly, $(a,b) \in T(f)$ is equivalent to $a \in m(f)(ab)$ for $f(1) = 1$.

The scope of Ryden's work was confined to investigating groups of the form $(c(A),*)$, that is, groups formed from arithmetical classes under Dirichlet convolution. We may reformulate and prove his main result using results from \cite{us4}, but first need the following definition from that work.

\begin{definition}[Definition 10 from \cite{us4}]
    Let $f \in \Ang$ be an arithmetical function with $f(1) = 1$. Let $A$ and $B$ be symmetric $A$-functions such that $m \in B(mn)$, $a \in A(m)$, and $b \in A(n)$ together imply that $f(ab) = f(a)f(b)$. We then call $f$ \textbf{$(A,B)$-split}. If under the same premises, $a \in H(ab)$ for an $A$-function $H$, then we say that $H$ is $(A,B)$-split.
\end{definition}

\begin{theorem}[Restatement of Theorem 3.8 of Ryden]
    Suppose $A \subseteq D^1$ is a symmetric, reflexive $A$-function. Then $(c(A),*)$ is a group if and only if for all $f \in c(A)$, $f$ is $(D,A)$-split.
\end{theorem}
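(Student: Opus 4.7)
The plan is to prove the two directions separately via direct combinatorial identities. For the backward direction I show that the set of $(D,A)$-split functions (with value $1$ at $1$) is closed under $*$ and under Dirichlet inversion, and that it coincides with $c(A)$ under the hypothesis. For the forward direction I exploit the inclusion $c(D) \subseteq c(A)$ (which holds because $A \subseteq D^1 \subseteq D$, so $c(D) \subseteq c(D^1) \subseteq c(A)$) by inserting completely multiplicative test functions, and use closure of $(c(A),*)$ to extract a polynomial identity that forces splittingness.

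For the backward direction, I first establish \textbf{Lemma (i)}: if $f,g$ are both $(D,A)$-split, then so is $f*g$. Given $m \in A(mn)$, $a \mid m$, $b \mid n$, coprimality $(a,b)=1$ (from $A \subseteq D^1$) splits $(f*g)(ab) = \sum_{d_1 \mid a,\, d_2 \mid b} f(d_1 d_2)\, g((a/d_1)(b/d_2))$. The splittingness of $f$ with ambient $(m,n,d_1,d_2)$ and of $g$ with $(m,n,a/d_1,b/d_2)$ collapses this to $(f*g)(a)(f*g)(b)$. Next, \textbf{Lemma (ii)}: if $f$ is $(D,A)$-split with $f(1)=1$, so is $f^{-1}$. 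Induct on $ab$ using $\sum_{d \mid ab} f(d) f^{-1}(ab/d) = 0$ for $ab > 1$: split $d = d_1 d_2$, replace $f(d_1 d_2)$ by $f(d_1) f(d_2)$ (valid since $d_1 \mid m$ and $d_2 \mid n$), apply the inductive hypothesis to $f^{-1}((a/d_1)(b/d_2))$ for $(d_1,d_2) \neq (1,1)$ using the \emph{same} ambient pair $(m,n)$ (which remains a valid witness because $a/d_1 \mid m$ and $b/d_2 \mid n$), and simplify via $f * f^{-1} = \iota$. The correction term $\iota(a)\iota(b)$ vanishes because $ab > 1$, yielding $f^{-1}(ab) = f^{-1}(a) f^{-1}(b)$. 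Since any $(D,A)$-split function is class-$A$ (take $(a,b) = (m,n)$), the hypothesis makes $c(A)$ equal to the set of $(D,A)$-split functions with $f(1)=1$, and these two lemmas together with $\iota \in c(A)$ give the group structure.

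For the forward direction, assume $(c(A),*)$ is a group and, toward contradiction, fix $f \in c(A)$ together with a witness $(m,n,a_0,b_0)$ with $m \in A(mn)$, $a_0 \mid m$, $b_0 \mid n$, and $\alpha(a_0,b_0) := f(a_0 b_0) - f(a_0) f(b_0) \neq 0$. Let $g$ be any completely multiplicative function; since $c(D) \subseteq c(A)$, $g \in c(A)$, hence $f*g \in c(A)$ by closure, so $(f*g)(mn) = (f*g)(m)(f*g)(n)$. Expanding using $(m,n)=1$ and complete multiplicativity of $g$ yields
\[
\sum_{a \mid m,\, b \mid n} \alpha(a,b)\, g(m/a)\, g(n/b) = 0.
\]
Treating $\{g(p) : p \mid mn\}$ as independent indeterminates $x_p$, each $g(m/a) g(n/b)$ becomes the monomial $\prod_{p \mid m} x_p^{v_p(m)-v_p(a)} \prod_{q \mid n} x_q^{v_q(n)-v_q(b)}$; distinct $(a,b)$ yield distinct monomials, since the exponent pattern uniquely recovers $a$ and $b$. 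Linear independence of distinct monomials in a polynomial ring forces every $\alpha(a,b) = 0$, contradicting $\alpha(a_0,b_0) \neq 0$.

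The main obstacle I anticipate is the bookkeeping in Lemma (ii): one must track that the ambient witness $(m,n)$ continues to serve for every reduced divisor pair $(a/d_1, b/d_2)$ encountered during the induction, which reduces to the simple observation that divisibility is preserved under $a \mapsto a/d_1$. The forward polynomial trick is clean precisely because $(m,n)=1$ forces the prime supports of $m$ and $n$ to be disjoint, so the monomials associated with different divisor pairs never collide.
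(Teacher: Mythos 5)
Your proof is correct, but it takes a genuinely different route from the paper's. The paper's argument is essentially a citation chain: it invokes Theorem 1 of \cite{us4} (which characterizes when $*$ preserves $c(A)$ by the three conditions $D \in C(A)$, $\tau \in c(A)$, and all $f \in c(A)$ being $(D,A)$-split), verifies the first two conditions from $A \subseteq D^1$ and $m(\tau) = D^1$, and then applies Corollary 3 of \cite{us4} to pass from preservation plus associativity to a group; the converse runs back through the same characterization. You instead give a self-contained proof: for the backward direction you show directly that the $(D,A)$-split functions with value $1$ at $1$ form a set closed under $*$ and Dirichlet inversion (both lemmas check out --- the key points being that $A \subseteq D^1$ forces $(a,b)=1$ so divisor sums over $ab$ factor, and that the ambient witness $(m,n)$ persists under $a \mapsto a/d_1$, $b \mapsto b/d_2$ in the inversion induction), and you correctly observe that under the hypothesis this set \emph{is} $c(A)$ since taking $(a,b)=(m,n)$ shows every split function is class-$A$. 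For the forward direction your test-function argument with completely multiplicative $g$ and linear independence of monomials is sound (the disjointness of prime supports of $m$ and $n$, coming from $(m,n)=1$, is exactly what makes the monomials distinct), and it effectively reproves from scratch the portion of Theorem 1 of \cite{us4} that the paper cites. What each approach buys: the paper's proof is shorter and places the result inside the general preservation machinery of \cite{us4} (making visible the roles of $\tau$ and of $D \in C(A)$), while yours is elementary, independent of that machinery, and makes explicit exactly where the hypothesis $A \subseteq D^1$ enters.
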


\begin{proof}
    By Theorem 1 of \cite{us4}, $*_D = *$ preserves $c(A)$ if and only if
    \begin{enumerate}
        \item $D \in C(A)$
        \item $\tau_D = \tau \in c(A)$
        \item $\forall f \in c(A)$, $f$ is $(D,A)$-split.
    \end{enumerate}
    
    The third condition is explicitly satisfied by assumption. Since $D \in C(D)$, $D \in C(A)$ for all $A \in \A$. Finally, $\tau \in c(D^1)$ and since $A \subseteq D^1$, $\tau \in c(A)$. Then by Corollary 3 of \cite{us4}, since $*$ is associative, $(c(A), *)$ forms a group.
    
    Going the other way, we see that if $(c(A), *)$ forms a group then $*$ must preserve $c(A)$, so in particular since $m(\tau) = D^1$, we need $A \subseteq D^1$ and we need all $f \in c(A)$ to be $(D,A)$-split, so we are done.
\end{proof}

\subsection{Arithmetical functions associated with complete $A$-functions}

\begin{definition}
    Let $A$ be complete and let $n = q_1^{a_1} \cdots q_l^{a_l}$ be the $A$-decomposition of $n$. Define $\omega_A(n) := l$ and define, for $q \in \Primes_A$, $\nu_{A,q}(n)$ to be $a_i$ if $q = q_i$ for some $i$, and 0 otherwise.
\end{definition}

\begin{definition}
    Let $A$ be complete. Let $\prim_A(n)$ be the set of $A$-primitive elements in the $A$-decomposition of $n$.
\end{definition}

It is easy to see $\omega_A(n) = |\prim_A(n)|$. We also have the following:

\begin{proposition}\label{PA}
    Suppose $A$ is complete and let $B := \perf(A)$ be the $A$-function defined at each $n \in \N$ to be, for $n = q_1^{a_1} \cdots q_l^{a_l}$ the $A$-decomposition of $n$,
    \[B(n) = 
    \begin{cases}
        \{1,n\} & \text{ if }\omega_A(n) = 1\\
        B(q_1^{a_1}) \cdots B(q_l^{a_l}) & \text{otherwise.}
    \end{cases} \]
    Then $B$ is perfect.
\end{proposition}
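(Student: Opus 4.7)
The plan is to verify both defining properties of a perfect $A$-function for $B = \perf(A)$: namely $B \in C(B)$ and $B \subseteq B^1$. My strategy is to first describe $B$ structurally on each $n$, then invoke Theorem \ref{frac} to obtain the class condition, and finally dispatch the iterate condition by a direct uniqueness argument.

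The crux is the following structural sub-claim: if $n = q_1^{a_1} \cdots q_l^{a_l}$ is the $A$-decomposition of $n$, then $\omega_A(q_i^{a_i}) = 1$ with sole $A$-primitive $q_i$, for every $i$. To establish it I would examine the recursive tree of $A$-divisor splits furnished by the proof of Lemma \ref{rep}: the leaves are $A$-primitives, and for each fixed $i$, the subtree whose leaves are precisely the $a_i$ occurrences of $q_i$ is rooted at the integer $q_i^{a_i}$ and serves as a valid decomposition tree of $q_i^{a_i}$ in which every leaf equals $q_i$; uniqueness of the $A$-decomposition from Theorem \ref{comp} then forces this to be \emph{the} $A$-decomposition of $q_i^{a_i}$, so $\omega_A(q_i^{a_i}) = 1$. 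Granting the sub-claim, the recursion defining $B$ terminates immediately on each factor, giving
\[
B(n) \;=\; B(q_1^{a_1}) \cdots B(q_l^{a_l}) \;=\; \{1, q_1^{a_1}\} \cdots \{1, q_l^{a_l}\}.
\]
The integers $q_1^{a_1}, \ldots, q_l^{a_l}$ are then pairwise distinct $B$-primitive elements, so $n = (q_1^{a_1})^1 \cdots (q_l^{a_l})^1$ is a $B$-decomposition of $n$; completeness of $B$ follows from Theorem \ref{comp}(3) by defining $g$ freely on $\Primes_B$ and extending via $g(n) := \prod_i g(q_i^{a_i})$, which visibly lies in $c(B)$. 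The displayed identity for $B(n)$ is then precisely the \Aname condition, so Theorem \ref{frac} yields $B \in C(B)$.

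Finally, for $B \subseteq B^1$, any $d \in B(n)$ is a subproduct $d = \prod_{i \in S} q_i^{a_i}$ for some $S \subseteq \{1, \ldots, l\}$, with $n/d = \prod_{i \notin S} q_i^{a_i}$. An element $x \in B(d) \cap B(n/d)$ is simultaneously a subproduct indexed by some $T_1 \subseteq S$ and some $T_2 \subseteq S^c$; applying uniqueness of the $A$-decomposition of $x$ (the $q_i$'s being distinct $A$-primitives) forces $T_1 = T_2 \subseteq S \cap S^c = \emptyset$, so $x = 1$. Combined with $B \in C(B)$, this shows $B$ is perfect. The principal obstacle is the structural sub-claim on $\omega_A(q_i^{a_i})$, which depends on a careful reading of the uniqueness assertion in Theorem \ref{comp}; once it is in place, the rest of the argument flows directly from the \Aname machinery developed earlier in the section.
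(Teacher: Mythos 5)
Your strategy coincides with the paper's: show $B \in C(B)$ via the paradigmatic condition and get $B \subseteq B^1$ from the disjointness of complementary subproducts of the $q_i^{a_i}$. You have also correctly isolated the point the paper passes over in silence, namely the structural sub-claim that the $A$-decomposition of the \emph{integer} $q_i^{a_i}$ is $q_i$ taken $a_i$ times, so that $B(q_i^{a_i}) = \{1, q_i^{a_i}\}$ and every element of $B(n)$ is a subproduct $\prod_{i \in S} q_i^{a_i}$. Unfortunately your justification of that sub-claim does not hold up. In a decomposition tree furnished by Lemma \ref{rep}, the $a_i$ leaf-occurrences of $q_i$ need not be the leaf set of any subtree, and no node of the tree need equal the integer $q_i^{a_i}$ at all: already for $A = D$ and $n = 36$ one may split $36 = 6 \cdot 6 = (2 \cdot 3)(2 \cdot 3)$, and neither $4$ nor $9$ occurs as a node. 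You therefore never exhibit a decomposition of $q_i^{a_i}$ into copies of $q_i$, so there is nothing for the uniqueness statement of Theorem \ref{comp} to compare against.

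The gap is not merely expository, because the sub-claim --- and the ensuing assertion that $q_1^{a_1}, \dots, q_l^{a_l}$ are pairwise distinct integers --- can fail for a complete $A$. Let $A$ be the multiplicative $A$-function with $A(2) = \{1,2\}$, $A(4) = \{1,4\}$, $A(2^a) = \{1, 2, 2^{a-1}, 2^a\}$ for $a \geq 3$, and $A(p^a) = D^1(p^a)$ for odd primes $p$. This $A$ is complete, and the $A$-decomposition of $16$ is $2^2 \cdot 4^1$, yet the $A$-decomposition of the integer $2^2 = 4$ is $4^1$ because $4$ is itself $A$-primitive. Here $q_1^{a_1} = q_2^{a_2} = 4$ as integers, so $B(16) = B(4)B(4) = \{1,4,16\}$ and $B(4) \cap B(16/4) = \{1,4\} \neq \{1\}$: the disjointness argument for $B \subseteq B^1$ collapses (and so does the paper's own proof, which tacitly assumes the same structure). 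Closing the gap requires either a genuine argument that distinct index sets $S$ yield distinct integers $\prod_{i \in S} q_i^{a_i}$ carrying the expected $A$-decompositions, or an additional hypothesis on $A$ guaranteeing this; completeness alone does not supply it.
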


\begin{proof}
    The fact that $B = \perf(A)$ is such that $B \in C(B)$ follows from the definition. We show that $B \subseteq B^1$. If $n \in \Primes_B$ then $B(n) = B^1(n)$, so assume $n \notin \Primes_B$. We need to show that $d \in B(n) \Rightarrow B(d) \cap B(\frac{n}{d}) = \1$. If $d = q_1^{b_1} \cdots q_l^{b_l}$ is the $A$-decomposition of $d$ and $\frac{n}{d} = q_1^{c_1} \cdots q_l^{c_l}$ is the $A$-decomposition of $\frac{n}{d}$, then for each $i$, $b_i$ or $c_i$ is 0, for the elements of $B(n)$ are of the form $q_{i_1}^{a_{i_1}} \cdots q_{i_k}^{a_{i_k}}$. Hence, $B(d) \cap B\parens{\frac{n}{d}} = \1$ and we are done.
\end{proof}

Hence, for example, $\perf(D) = D^1$ and $\perf(G) = \T$. For any structured $A$-function $B$, $\perf(B) = B^1$. We also have the following:

\begin{proposition}\label{FA}
    Suppose $A$ is complete and let $F := F(A)$ be the $A$-function defined at each $n \in \N$ to be, for $n = q_1^{a_1} \cdots q_l^{a_l}$,
    \[F(n) = A(q_1)^{a_1} \cdots A(q_l)^{a_l}\]
    Then $F$ is \Aname.
\end{proposition}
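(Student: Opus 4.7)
The plan is to show $F \in C(F)$, since Theorem \ref{frac} will then immediately yield that $F$ is paradigmatic. The argument amounts to computing $F(d)$ and $F(n/d)$ explicitly from the $A$-decomposition machinery and noting that they multiply to $F(n)$.

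First I would identify $\Primes_F$. For $q \in \Primes_A$, the $A$-decomposition of $q$ is $q = q$, so $F(q) = A(q) = \{1,q\}$ and $q \in \Primes_F$. Conversely, if $n \notin \Primes_A$ has $A$-decomposition $n = q_1^{a_1}\cdots q_l^{a_l}$, then $F(n) = A(q_1)^{a_1}\cdots A(q_l)^{a_l}$ and, since each $A(q_i) = \{1, q_i\}$, this unpacks to
\[F(n) = \braced{q_1^{b_1}\cdots q_l^{b_l} : 0 \leq b_i \leq a_i \text{ for each } i};\]
in particular $q_1 \in F(n)$ with $q_1 \notin \{1,n\}$, so $n \notin \Primes_F$. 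Hence $\Primes_F = \Primes_A$.

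Next, take any $d \in F(n)$ and write $d = q_1^{b_1}\cdots q_l^{b_l}$ with $0 \leq b_i \leq a_i$. Dropping zero exponents, this is an expression of $d$ as a product of elements of $\Primes_A$, which by completeness of $A$ must be the unique $A$-decomposition of $d$; likewise $n/d = q_1^{a_1-b_1}\cdots q_l^{a_l-b_l}$ is the $A$-decomposition of $n/d$. Applying the definition of $F$ at each factor,
\[F(d)\cdot F\parens{\tfrac{n}{d}} = A(q_1)^{b_1}\cdots A(q_l)^{b_l}\cdot A(q_1)^{a_1-b_1}\cdots A(q_l)^{a_l-b_l} = A(q_1)^{a_1}\cdots A(q_l)^{a_l} = F(n),\]
establishing $F \in C(F)$, and Theorem \ref{frac} finishes the argument.

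The only subtle point is ensuring that the factorization $q_1^{b_1}\cdots q_l^{b_l}$ inherited from membership in $F(n)$ really agrees with the $A$-decomposition of $d$. This is exactly where completeness of $A$ is invoked: uniqueness of the $A$-decomposition forces the two representations to match. Beyond this bookkeeping, the proof is a direct algebraic unpacking with no inductive or analytic obstacle.
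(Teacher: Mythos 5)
Your overall strategy --- verify $F \in C(F)$ and invoke Theorem \ref{frac} --- is a more explicit route than the paper's, which simply asserts that $F$ is paradigmatic ``by construction'' once well-definedness is noted; your identification of $\Primes_F = \Primes_A$ is also correct. However, the central step has a genuine gap. From $d \in F(n)$ you write $d = q_1^{b_1}\cdots q_l^{b_l}$ and claim that, after dropping zero exponents, this \emph{is} the $A$-decomposition of $d$ ``by uniqueness.'' Uniqueness (Theorem \ref{comp}) only says that $d$ has one representation as a product of $A$-primitive elements \emph{with the property that} $f(d) = \prod_i f(q_i)^{b_i}$ for every $f \in c(A)$; it does not say that every factorization of $d$ into $A$-primitive elements is that distinguished one. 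The $q_i$ occurring in the $A$-decomposition of $n$ need not be pairwise coprime, and a product of several of them can itself be $A$-primitive, in which case its $A$-decomposition is a single factor and your computation of $F(d)$ breaks down.

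Concretely, let $A(4)=\{1,2,4\}$, $A(9)=\{1,3,9\}$, $A(36)=\{1,4,9,36\}$, and $A(n)=\{1,n\}$ for every other $n$. This $A$ is complete (the value of $f \in c(A)$ at each $A$-primitive element may be chosen freely), and the $A$-decomposition of $36$ is $2^2 3^2$, so $F(36) = A(2)^2 A(3)^2 = \{1,2,3,4,6,9,12,18,36\} \ni 6$. But $6$ is $A$-primitive, so its $A$-decomposition is $6$ itself, $F(6) = \{1,6\}$, and $F(6)F(6) = \{1,6,36\} \neq F(36)$; hence $F \notin C(F)$. So the step does not merely lack justification --- it can fail, and with it the conclusion (by Theorem \ref{frac} this $F$ is not paradigmatic; indeed it is not even complete, since $f(6)^2 = f(2)^2 f(3)^2$ for every $f \in c(F)$). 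Your argument goes through only under an additional hypothesis guaranteeing that every element of $A(q_1)^{a_1}\cdots A(q_l)^{a_l}$ has the inherited product as its $A$-decomposition; the paper's own one-line proof passes over exactly the same point.
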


\begin{proof}
    This $A$-function is well-defined due to the unique $A$-decomposition of each $n \in \N$, and so by construction is \Aname.
\end{proof}

Propositions \ref{PA} and \ref{FA} tell us that each complete $A$-function may be associated with a perfect $A$-function and a \Aname $A$-function.

\begin{lemma}\label{perfprim}
    Suppose $A$ is complete and suppose $f \in \Ang$ is such that $f(1) \neq 0$ and $f(mn) = f(m)f(n)$ whenever $\prim_A(m) \cap \prim_A(n) = \emptyset$. Then $f \in c(\perf(A))$.
\end{lemma}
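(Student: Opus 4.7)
The plan is to exploit the disjoint-primitives multiplicativity hypothesis to verify directly the defining condition of $c(\perf(A))$, namely that $m \in \perf(A)(mn)$ forces $f(mn) = f(m)f(n)$. The strategy reduces entirely to showing that $m \in \perf(A)(mn)$ implies $\prim_A(m) \cap \prim_A(n) = \emptyset$, at which point the hypothesis delivers the desired identity.

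First, instantiating the hypothesis at $m = n = 1$ (where $\prim_A(1) \cap \prim_A(1) = \emptyset$) yields $f(1) = f(1)^2$; since $f(1) \neq 0$, I conclude $f(1) = 1$, so in particular $f$ is not identically zero and hence a candidate for membership in $c(\perf(A))$.

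To carry out the main reduction, I would unwind the recursive description of $\perf(A)$ from Proposition \ref{PA}. Writing $mn = q_1^{a_1} \cdots q_l^{a_l}$ for the $A$-decomposition, a straightforward induction on $\omega_A(mn)$ shows that the elements of $\perf(A)(mn)$ are exactly the ``subset products'' $\prod_{i \in I} q_i^{a_i}$ for $I \subseteq \{1, \ldots, l\}$. Hence $m = \prod_{i \in I} q_i^{a_i}$ and $n = \prod_{i \notin I} q_i^{a_i}$ for some such $I$. To identify $\prim_A(m)$ with $\{q_i : i \in I\}$, and analogously $\prim_A(n)$ with $\{q_i : i \notin I\}$, I would invoke the uniqueness of the $A$-decomposition (Theorem \ref{comp}): the expression $\prod_{i \in I} q_i^{a_i}$ is already a product of distinct $A$-primitives equalling $m$, so once one verifies the multiplicative property $g(m) = \prod_{i \in I} g(q_i)^{a_i}$ for every $g \in c(A)$, uniqueness pins it down as the $A$-decomposition of the integer $m$. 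With $\prim_A(m)$ and $\prim_A(n)$ indexed by complementary subsets, they are disjoint, and applying the hypothesis to $f$ yields $f(mn) = f(m)f(n)$.

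The main obstacle is precisely the verification that the subset-product expression $\prod_{i \in I} q_i^{a_i}$ really is the $A$-decomposition of $m$ as an integer, rather than merely some factorization into $A$-primitives. Concretely, one must establish $g(m) = \prod_{i \in I} g(q_i)^{a_i}$ for every $g \in c(A)$ in order to invoke uniqueness. This step relies on the interplay of $g$'s values on $m$, $n$, and $mn$ together with the full strength of completeness and the recursive ``subset product'' structure of $\perf(A)$-divisors guaranteed by Proposition \ref{PA}; it is here that the completeness hypothesis on $A$ and the particular construction of $\perf(A)$ are essential, rather than merely decorative.
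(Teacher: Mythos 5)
Your overall reduction---show that $m \in \perf(A)(mn)$ forces $\prim_A(m) \cap \prim_A(n) = \emptyset$ and then apply the hypothesis---is exactly the reduction the paper makes, but your route to the disjointness is different and, as written, incomplete at precisely the point you flag as ``the main obstacle.'' You correctly observe that everything hinges on showing that the subset product $\prod_{i \in I} q_i^{a_i}$ is the $A$-decomposition of the integer $m$ (and likewise for $n$), but you do not actually establish this; you only gesture at ``the interplay of $g$'s values on $m$, $n$, and $mn$.'' That interplay is not available in the form you need it: for $g \in c(A)$ the relation $g(mn) = g(m)g(n)$ is guaranteed only when $m \in A(mn)$, and there is no containment $\perf(A) \subseteq A$ for a general complete $A$ (only $\perf(A) \subseteq F(A)$, with $c(A) = c(F(A))$ by Theorem \ref{compF}). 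So the step you would lean on is essentially the conclusion you are trying to reach, and without it you cannot identify $\prim_A(m)$ with $\{q_i : i \in I\}$. One can repair this by routing through $F(A)$ --- since $F(A) \in C(F(A))$ and $c(A)=c(F(A))$, a divisor $d \in F(A)(mn)$ does satisfy $g(mn)=g(d)g(mn/d)$ for all $g \in c(A)$, and completeness then pins down the exponent vectors --- but that argument is absent from your proposal.

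The paper avoids this entire difficulty by citing Proposition \ref{PA}: since $B := \perf(A)$ is perfect, $B \subseteq B^1$, so $m \in B(mn)$ gives $B(m) \cap B(n) = \1$ directly, from which disjointness of the primitive sets is read off without ever re-deriving the $A$-decompositions of $m$ and $n$ from scratch. I would recommend either adopting that shortcut or, if you want to keep your hands-on unwinding of the recursion, supplying the missing induction (in the spirit of the proof of Proposition \ref{PA}) showing that each subset product of the $A$-decomposition of $mn$ has that subset product as its own $A$-decomposition.
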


\begin{proof}
    Let $B = \perf(A)$ and suppose $m \in B(mn)$. Then since $B$ is perfect, $m \in B(mn) \Rightarrow m \in B^1(mn) \Rightarrow B(m) \cap B(n) = \1 \Rightarrow \prim_B(m) \cap \prim_B(n) = \emptyset$ since $\prim_B(m)$ and $\prim_B(n)$ are subsets of $B(m)$ and $B(n)$, respectively, which do not contain 1. But then our result follows.
\end{proof}

For the purposes of the next result, let $A$ be complete. We say that $n \in \N$ is \textbf{$A$-square-free} if in the $A$-decomposition of $n = q_1^{a_1} \cdots q_l^{a_l}$, each $a_i$ is either 0 or 1.

\begin{theorem}
    Let $A$ be complete and let $B := \perf(A)$ and $F := F(A)$. Then $\mu_F,\mu_B \in c(B)$, $\omega_A = \omega_F = \omega_B$, $\mu_B(n) = (-1)^{\omega_A(n)}$, and furthermore
    \begin{equation}\label{eq:muF}
    \mu_F(n) = 
        \begin{cases}
            \mu_B(n) & n\text{is A-square-free}\\
            0 & \text{otherwise.}
        \end{cases}
    \end{equation}
\end{theorem}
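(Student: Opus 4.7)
The plan is to exploit the explicit lattice structure that the $A$-decomposition places on $\N$. First I would identify the $F$- and $B$-primitive elements in terms of $\Primes_A$. Since $F(q^a) = A(q)^a = \{1, q, \dots, q^a\}$ for any $q \in \Primes_A$ and any $a$, the $F$-primitive elements coincide exactly with $\Primes_A$, and the $F$-decomposition of any $n$ agrees with its $A$-decomposition, yielding $\omega_F = \omega_A$. For $B = \perf(A)$, the recursive definition together with $\omega_A(q^a) = 1$ forces $B(q^a) = \{1, q^a\}$ for every $q \in \Primes_A$ and $a \geq 1$, so the $q^a$ are the only $B$-primitive elements. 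The identity $B(n) = B(q_1^{a_1}) \cdots B(q_l^{a_l})$ then exhibits $n = q_1^{a_1} \cdots q_l^{a_l}$ as its own $B$-decomposition, with each $B$-primitive factor having exponent one, yielding $\omega_B = \omega_A$.

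For $\mu_B$, I would use that $B$ is perfect (Proposition \ref{PA}), so $B \in C(B)$. Evaluating $u *_B \mu_B = \iota$ at a $B$-primitive $q^a$ gives $\mu_B(q^a) = -1$. The factorization $B \in C(B)$ then extends this, by induction on $\omega_A(n)$, to $\mu_B(n) = (-1)^{\omega_A(n)}$ for every $n$, and the same disjoint-support factorization exhibits $\mu_B$ as satisfying the hypothesis of Lemma \ref{perfprim}, so $\mu_B \in c(B)$.

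For $\mu_F$, I would use that $F$ is paradigmatic (Proposition \ref{FA}), so $F \in C(F)$ and $F(n)$ factors as $F(q_1^{a_1}) \cdots F(q_l^{a_l})$ on the $A$-decomposition of $n$. The identity $\sum_{d \in F(n)} \mu_F(d) = [n=1]$ therefore factors over the product lattice $\prod_i \{0, 1, \dots, a_i\}$. Solving the one-dimensional Möbius recurrence over a single $q \in \Primes_A$ yields $\mu_F(q^b) = 1, -1, 0, 0, \dots$ for $b = 0, 1, 2, \dots$, and induction on $\omega_A(n)$ (using $F \in C(F)$ to peel off one factor $q_i^{a_i}$ at a time) promotes this to $\mu_F(n) = \prod_i \mu_F(q_i^{a_i})$. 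This product vanishes unless every $a_i \in \{0,1\}$, i.e., unless $n$ is $A$-square-free, in which case it equals $(-1)^{\omega_A(n)} = \mu_B(n)$, establishing equation \eqref{eq:muF}.

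Finally, $\mu_F \in c(B)$ follows from Lemma \ref{perfprim}: if $\prim_A(m) \cap \prim_A(n) = \emptyset$ then the product formula for $\mu_F$ splits cleanly across the two disjoint index sets. The main obstacle I anticipate is the clean justification of the multiplicative formula $\mu_F(n) = \prod_i \mu_F(q_i^{a_i})$ purely from $F \in C(F)$; the cleanest route I see is the induction on $\omega_A(n)$ sketched above, leveraging the fact that every $q_i^{a_i}$ lies in $F(n)$ and that the $*_F$-sum over $F(n) = F(q_i^{a_i}) \cdot F(n/q_i^{a_i})$ decouples as a product of one-coordinate sums.
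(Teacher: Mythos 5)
Your proposal is correct and follows essentially the same route as the paper: identify $\Primes_F=\Primes_A$ and $\Primes_B=\{q^a\}$, exploit the product structure of $B(n)$ and $F(n)$ over the $A$-decomposition to get the class-$B$ property via Lemma \ref{perfprim}, and pin down the values by the Möbius recurrence at primitives. If anything, your bottom-up solution of the one-dimensional recurrence for $\mu_F(q^b)$ and the explicit induction on $\omega_A$ supply details that the paper's own (quite terse) verification leaves implicit.
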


\begin{proof}
    The fact that $\omega_A = \omega_F = \omega_B$ is immediate from the definition of $\omega_A$ and $\perf(A)$.
    
    Suppose $m,n \in \N$ are such that $\prim_A(m) \cap \prim_A(n) = \emptyset$. If $m \in B(mn)$, then $\prim_A(mn) = \prim_A(m) \cup \prim_A(n)$ and so $\omega_A(mn) = \omega_A(m) + \omega_A(n)$. Then $(-1)^{\omega_A(mn)} = (-1)^{\omega_A(m)}(-1)^{\omega_A(n)}$, so $g(n) := (-1)^{\omega_A(n)}$ is class-$B$.
    
    To see that $g = \mu_B$, we may check that for $q \in \Primes_B$, $1 + g(q) = 1-1 = 0$, since $g \in c(B)$, so $g = \mu_B$ and we are done. To see that \eqref{eq:muF} gives the correct expression for $\mu_F$, first apply the same reasoning as for $\mu_B$ to see that $\mu_F \in c(B)$. Then we may check, for $q \in \Primes_A = \Primes_{F(A)}$, $(u *_F \mu_F)(q^a) = 1 + \mu_F(q) = 1 - 1 = 0$ for all $a$ such that $q^a$ is the $A$-decompositon of $n = q^a$.
\end{proof}

\subsection{Classification of complete $A$-functions}

The next few results enable us to understand the relationship between complete $A$-functions, the subclass of \Aname $A$-functions and the further subclass of perfect $A$-functions.

\begin{lemma}\label{minB}
    Suppose $A$ is complete and $B$ is such that
    \begin{enumerate}
        \item $\Primes_B = \Primes_A$
        \item For each $n \in \N$, $n \notin \Primes_B$, $n > 1$, there is some $d \in A(n)$, $1 < d < n$, such that $B(n) = \{1,d,n\}$
    \end{enumerate}
    Then $B$ is complete and $c(B) = c(A)$.
\end{lemma}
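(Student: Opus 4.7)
The plan is to establish $c(A) = c(B)$ and then observe that completeness of $B$ follows immediately from $\Primes_B = \Primes_A$ together with Theorem \ref{comp}. Before starting, I would note that $B(k) \subseteq A(k)$ for every $k \in \N$: for $k \in \Primes_B = \Primes_A$ both sides equal $\{1,k\}$, for $k = 1$ both equal $\{1\}$, and for $k > 1$ with $k \notin \Primes_B$ the set $B(k) = \{1, d, k\}$ is contained in $A(k)$ by the second hypothesis together with $A$ being simple and reflexive. This inclusion immediately yields $c(A) \subseteq c(B)$, since $m \in B(mn) \Rightarrow m \in A(mn)$.

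For the reverse inclusion $c(B) \subseteq c(A)$, I would first prove the intermediate claim that for every $f \in c(B)$ and every $n \in \N$ with $A$-decomposition $n = q_1^{a_1} \cdots q_l^{a_l}$, one has $f(n) = f(q_1)^{a_1} \cdots f(q_l)^{a_l}$. The proof proceeds by strong induction on $n$: the base case $n = 1$ is an empty product (noting that $f(1) = 1$ follows from $f \in c(B)$ being not identically zero and $1 \in B(n)$), and the case $n \in \Primes_B$ is trivial. For $n > 1$ with $n \notin \Primes_B$, by hypothesis there is $d \in A(n)$ with $1 < d < n$ such that $d \in B(n)$, whence $f(n) = f(d) f(n/d)$ since $f \in c(B)$. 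The inductive hypothesis expresses $f(d) = \prod f(q_i)^{b_i}$ and $f(n/d) = \prod f(q_i)^{c_i}$ with the $b_i$ and $c_i$ coming from the respective $A$-decompositions. Crucially, because $d \in A(n)$ and $A$ is complete, the argument underlying Lemma \ref{rep} combined with the uniqueness of the $A$-decomposition of $n$ forces the exponent equality $a_i = b_i + c_i$, and the claim follows.

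With the claim in hand, the reverse inclusion is straightforward: for $f \in c(B)$ and $m \in A(mn)$, the same exponent-additivity argument (now applied to $m \in A(mn)$ in place of $d \in A(n)$) shows that the $A$-decompositions of $m$ and $n$ concatenate exponent-wise to that of $mn$, and applying the claim to $mn$, $m$, and $n$ individually produces $f(mn) = f(m) f(n)$. Having established $c(A) = c(B)$, completeness of $B$ falls out of Theorem \ref{comp}: the characterization of completeness depends only on the set of primitives and the arithmetical class, both of which coincide for $A$ and $B$, so the unique $A$-decomposition of each $n$ serves simultaneously as the required $B$-decomposition.

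The main obstacle is the careful handling of the exponent-additivity $a_i = b_i + c_i$ in the inductive step. One must separate what is purely a statement about integers and the structure of $A$ (namely, the uniqueness of the $A$-decomposition afforded by completeness of $A$) from what is a statement about an arbitrary $f \in c(B)$. Once one recognizes that the additivity is an artifact of $A$ alone, extracted from Lemma \ref{rep} applied within the class $c(A)$ and then transferred by uniqueness, the inductive step lifts cleanly from $c(A)$ to $c(B)$.
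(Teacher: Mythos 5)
Your proof is correct and follows essentially the same route as the paper's: both exploit the fact that $B(n)$ contains a single nontrivial divisor $d \in A(n)$ to run a strong induction that identifies the decomposition along $B$ with the unique $A$-decomposition, whose exponent-additivity is pinned down by completeness of $A$. You spell out the two inclusions giving $c(B) = c(A)$ more explicitly than the paper does, but the underlying argument is the same.
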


In general, if $B$ is a complete $A$-function such that $\tau_B(n) \leq 3$ for all $n$, we call $B$ \textbf{minimally complete}.

\begin{proof}
    Observe that by the proof of Lemma \ref{rep}, we may decompose any $n \in \N$ using $B$ in the same manner as $A$, since $B \subseteq A$. However, $B$ is such that each $n \in \N$ has only a single ``path'' along which it can be decomposed. Namely, if $n > 1$ is not $B$-primitive, then there is only one $d \in B(n)$ such that $1 < d < n$, enabling one to inductively show, starting from the $B$-primitive elements (which are equal to the $A$-primitive elements by assumption), that $B$-decomposition of $n$ is unique and equal to the $A$-decomposition of $n$ for all $n$.
\end{proof}

\begin{theorem}\label{compF}
    If $A$ is complete, then $c(A) = c(F(A))$.
\end{theorem}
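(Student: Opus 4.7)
The plan is to prove the two inclusions $c(F(A)) \subseteq c(A)$ and $c(A) \subseteq c(F(A))$ separately, using two structural observations: first, $A(n) \subseteq F(A)(n)$ for every $n$, and second, the $A$-decomposition and the $F(A)$-decomposition of every integer coincide. From these the conclusion will follow almost mechanically.

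For the first observation, I would argue that $A \subseteq F(A)$ as follows. Take $d \in A(n)$ with $1 < d < n$. Iterating the inductive construction in the proof of Lemma \ref{rep} at $d$ and $n/d$ produces a representation of $n$ as a product of $A$-primitive elements, formed by pooling the primitives appearing in the (inductively given) representations of $d$ and $n/d$ and summing exponents, and this pooled representation satisfies the conclusion of Lemma \ref{rep} for every $f \in c(A)$. Completeness of $A$ and Theorem \ref{comp} together force this pooled representation to be the $A$-decomposition $n = q_1^{a_1} \cdots q_l^{a_l}$, so that $d = q_1^{d_1} \cdots q_l^{d_l}$ with $0 \leq d_i \leq a_i$. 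By the formula defining $F(A)$ in Proposition \ref{FA}, this puts $d \in F(A)(n)$. The inclusion $c(F(A)) \subseteq c(A)$ then follows because $m \in A(mn) \Rightarrow m \in F(A)(mn)$.

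For the second observation, I would use that $F(A)$ is paradigmatic by Proposition \ref{FA} and therefore complete. Theorem \ref{comp} then yields a unique $F(A)$-decomposition $n = \prod (q_i')^{a_i'}$ with $q_i' \in \Primes_{F(A)}$, characterized by the property that $g(n) = \prod g(q_i')^{a_i'}$ for every $g \in c(F(A))$. A brief check on the definition of $F$-primitivity gives $\Primes_A = \Primes_{F(A)}$. The inclusion $c(F(A)) \subseteq c(A)$ just established now implies that the $A$-decomposition of $n$ already satisfies the property characterizing the $F(A)$-decomposition when tested against the smaller class $c(F(A))$, so uniqueness in Theorem \ref{comp} forces the two decompositions to coincide. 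This matching step is the main obstacle in the argument: it is the only point at which completeness of $A$ is used in an essential way rather than as bookkeeping.

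Finally, for the reverse inclusion, given $f \in c(A)$, I would invoke Theorem \ref{comp} (part 3) applied to $F(A)$ to produce $g \in c(F(A))$ with $g(q) = f(q)$ for every $A$-primitive $q$. Using the common decomposition $n = q_1^{a_1} \cdots q_l^{a_l}$, the computation $f(n) = \prod f(q_i)^{a_i} = \prod g(q_i)^{a_i} = g(n)$ gives $f = g \in c(F(A))$, so $c(A) \subseteq c(F(A))$ and the proof is complete.
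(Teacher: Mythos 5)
Your argument is correct, and it reaches the conclusion by a genuinely different route than the paper does. The paper's proof is indirect: for each non-primitive $n$ it picks a single $d \in A(n)$ with $1 < d < n$, forms the minimally complete function $B$ with $B(n) = \{1,d,n\}$, and applies Lemma \ref{minB} to the pairs $(A,B)$ and $(F(A),B)$ to get $c(A) = c(B) = c(F(A))$; note that the second application silently requires the chosen $d$ to lie in $F(A)(n)$, i.e.\ exactly the containment $A \subseteq F(A)$ that you prove explicitly via the pooling argument from Lemma \ref{rep} and the uniqueness in Theorem \ref{comp}. You instead prove the two inclusions directly: $A \subseteq F(A)$ gives $c(F(A)) \subseteq c(A)$ immediately; the identification of the $A$- and $F(A)$-decompositions (via $\Primes_A = \Primes_{F(A)}$, completeness of the paradigmatic function $F(A)$, and uniqueness in Theorem \ref{comp}) plus the freedom of choice in Theorem \ref{comp}(3) for $F(A)$ yields, for any $f \in c(A)$, a $g \in c(F(A))$ agreeing with $f$ on $\Primes_A$ and hence everywhere, giving $c(A) \subseteq c(F(A))$. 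The paper's route is shorter on the page because it reuses Lemma \ref{minB}, while yours is more self-contained and has the advantage of making explicit two facts the paper leans on implicitly here and elsewhere --- that $A \subseteq F(A)$ and that the two decompositions coincide; both are sound.
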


\begin{proof}
    It is obvious that $\Primes_A = \Primes_{F(A)}$ by the definition of $F(A)$. For each $n \notin \Primes_A$, choose $d \in A(n)$ with $1 < d < n$ and construct a minimally complete $A$-function $B$ via $B(n) = \{1,d,n\}$ for these $n$, letting the $B$-primitive elements be equal to the $A$-primitive elements. Then the conditions of Lemma \ref{minB} hold for $A$ and $B$, as well as for $F(A)$ and $B$, giving us $c(A) = c(B) = c(F(A))$.
\end{proof}

\begin{theorem}\label{indComp}
    Suppose $A$ is complete and $B$ is such that $c(A) = c(B)$. Then $B$ is complete.
\end{theorem}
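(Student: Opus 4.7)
My plan is to verify completeness of $B$ through Theorem~\ref{comp}(3), namely, that for every $f \in \Ang$ there exists $g \in c(B)$ with $g(r_k) = f(k)$ at the $B$-primitives $r_k$. The whole argument reduces to proving $\Primes_A = \Primes_B$, because the surjection $c(A) \to \mathbb{C}^{\Primes_A}$ provided by $A$-completeness will then transfer directly to $c(B) = c(A)$ under the common enumeration, giving Theorem~\ref{comp}(3) for $B$.

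For the inclusion $\Primes_A \subseteq \Primes_B$, suppose $q \in \Primes_A$ but $q \notin \Primes_B$. Then some $d \in B(q)$ satisfies $1 < d < q$, and every $g \in c(B) = c(A)$ must obey $g(q) = g(d)\, g(q/d)$. Using Theorem~\ref{comp}(3) applied to $A$, I pick $g \in c(A)$ with $g(q) = 0$ and $g(q') = 1$ for every other $A$-primitive $q'$. Since $d < q$ and $q/d < q$, the $A$-decompositions of $d$ and $q/d$ involve only $A$-primitives strictly less than $q$, so $g(d) = g(q/d) = 1$; this yields the contradiction $0 = g(q) = g(d)\,g(q/d) = 1$.

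For the reverse inclusion $\Primes_B \subseteq \Primes_A$, suppose $r \in \Primes_B$ but $r \notin \Primes_A$. Then the $A$-decomposition $r = q_1^{a_1} \cdots q_l^{a_l}$ is nontrivial, with each $q_i \in \Primes_A \subseteq \Primes_B$ strictly less than $r$, and every $g \in c(A) = c(B)$ satisfies the rigid identity $g(r) = \prod_i g(q_i)^{a_i}$. The plan is to exploit the fact that $B(r) = \{1, r\}$ imposes no direct $B$-constraint on $g(r)$: I would run the inductive construction of Lemma~\ref{rep} for $B$ together with $A$-completeness to produce some $g \in c(B)$ violating this $A$-identity, contradicting $c(B) \subseteq c(A)$. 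Combining both inclusions gives $\Primes_A = \Primes_B$, and a final invocation of Theorem~\ref{comp}(3) on $A$ then completes the proof.

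The main obstacle is the reverse inclusion. Although $r \in \Primes_B$ grants no direct $B$-constraint at $r$, the function $B$ may still enforce the identity $g(r) = \prod_i g(q_i)^{a_i}$ indirectly via multiplicativity relations at larger integers $N$ divisible by $r$, and these can propagate back to constrain $g(r)$. The hard part is to ensure that a consistent $g \in c(B)$ deviating from the $A$-identity actually exists, which is likely to require a careful choice of $g$ that vanishes or is suitably controlled on every $N$ through which such a chain of $B$-relations involving $r$ could route.
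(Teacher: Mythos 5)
Your reduction is sound and your forward inclusion is complete: once $\Primes_A = \Primes_B$ is known, condition (3) of Theorem~\ref{comp} for $B$ is literally condition (3) for $A$ read through $c(B) = c(A)$, and your indicator-style function (zero at $q$, one at every other $A$-primitive) correctly rules out $q \in \Primes_A \setminus \Primes_B$, since Theorem~\ref{comp}(2) forces $g(d) = g(q/d) = 1$ for any proper $B$-divisor $d$ of $q$. This is a slightly different mechanism from the paper, which first establishes the containment $B \subseteq F(A)$ using the single witness $f$ with $m(f) = F(A)$ supplied by Theorem~\ref{mf}, and reads off $\Primes_A \subseteq \Primes_B$ from that containment; both routes work.

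The genuine gap is exactly where you flag it: the inclusion $\Primes_B \subseteq \Primes_A$. This step cannot be waved away, because it is precisely where the theorem would fail if it could fail: if some $r \in \Primes_B$ had a nontrivial $A$-decomposition $r = q_1^{a_1}\cdots q_l^{a_l}$, then $g(r)$ would be a determined rather than freely choosable coordinate for every $g \in c(B) = c(A)$, so $V(B)$ would be a proper subspace of $\Ang_{\mathbb{R}}$ and $B$ would not be complete. Hence one must actually produce $g \in c(B)\setminus c(A)$ from the assumption $r \in \Primes_B \setminus \Primes_A$, and your worry about propagation from larger integers is well founded: for instance, with $A = D$ and $B$ obtained by deleting $2$ from $D(4)$ only, the relations at $8$ and $16$ yield $f(4)\bigl(f(4)-f(2)^2\bigr) = 0$, so the missing identity at $4$ is recovered for every nonvanishing $f \in c(B)$. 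The escape is always through zero: the function with $f(1) = f(r) = 1$ and $f(n) = 0$ otherwise already lies in $c(B)\setminus c(A)$ unless $r \in B(r^2)$, and in general one must carry nonzero values only along the $B$-chains emanating from $r$ while annihilating everything else. This construction is a genuine piece of work that your proposal does not carry out (and which, for what it is worth, the paper itself compresses into the assertion that ``we can easily show that $\Primes_B = \Primes_{F(A)} = \Primes_A$''). As written, your proof is incomplete at this one step, and only at this step.
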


\begin{proof}
    Let $A$ be complete and let $B$ be such that $c(A) = c(B)$. We argue that $B \subseteq F(A)$. Indeed, were this not the case there would be some minimal $n \in \N$ such that $d \in B(n)$ but $d \notin F(A)(n)$. Hence $f \in c(B)$ would mean $f(d)f(\frac{n}{d}) = f(n)$, a relation not satisfied by \textbf{every} $f \in c(F(A))$ since there is such an $f$ with $m(f) = F(A)$ by Theorem \ref{mf}. Thus there can be no such value and $B \subseteq F(A)$.
    
    We can easily show that $\Primes_B = \Primes_{F(A)} = \Primes_A$; else, there would be relations satisfied by all $f \in c(B)$ that are not satisfied by all $f \in c(A)$, contradicting the assumption. This being said, if $n \notin \Primes_B$ then there is some proper non-trivial $m \in F(A)(n)$ such that $m \in B(n)$, and this would be true for each such $n$. Hence there is some minimally complete $\tilde{B}$ contained in $B$ such that $\Primes_{\tilde{B}} = \Primes_B$. Indeed, $F(B) = F(A)$.
\end{proof}

\begin{corollary}\label{corrComp}
    Suppose $A$ is complete, $B$ is minimally complete with $c(A) = c(B)$, and $B'$ is such that $B \subseteq B' \subseteq F(A)$. Then $B'$ is complete and $c(B') = c(A)$.
\end{corollary}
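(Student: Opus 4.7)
The plan is to exploit the obvious monotonicity principle that inclusions of $A$-functions reverse under passage to arithmetical classes, and then to sandwich $c(B')$ between two classes both known to equal $c(A)$. Completeness of $B'$ will then drop out of Theorem \ref{indComp}.

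First I would make explicit the monotonicity principle: if $A_1 \subseteq A_2$ in the sense that $A_1(n) \subseteq A_2(n)$ for every $n \in \N$, then $c(A_2) \subseteq c(A_1)$. This is immediate from the definition of class-$A$ membership, since any factorization relation $f(mn) = f(m) f(n)$ forced by $m \in A_2(mn)$ is a fortiori forced on the smaller collection of pairs for which $m \in A_1(mn)$. No nontrivial property of the $A$-functions involved is needed for this step.

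Applying the monotonicity principle to the hypothesis chain $B \subseteq B' \subseteq F(A)$ yields
\[
    c(F(A)) \subseteq c(B') \subseteq c(B).
\]
By hypothesis $c(B) = c(A)$, and by Theorem \ref{compF} we have $c(F(A)) = c(A)$. The two ends of the chain therefore coincide with $c(A)$, which forces $c(B') = c(A)$. This handles the second conclusion of the corollary. The first conclusion, namely that $B'$ is complete, is then an immediate application of Theorem \ref{indComp} to the pair $A$ and $B'$: since $A$ is complete and $c(A) = c(B')$, the theorem delivers completeness of $B'$.

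The proof is therefore almost entirely bookkeeping: the real content has been loaded into Theorem \ref{compF} (that passing from $A$ to its fractured shadow $F(A)$ preserves the class) and Theorem \ref{indComp} (that completeness is determined by the class). The only conceivable obstacle is convincing oneself of the monotonicity principle, which is a single line. I would keep the write-up short and simply display the sandwich above, cite the two ambient theorems, and conclude.
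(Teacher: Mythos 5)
Your proof is correct and follows essentially the same route as the paper's: sandwich $c(B')$ between $c(F(A))$ and $c(B)$ via the inclusion-reversal of classes, identify both ends with $c(A)$ using Theorem \ref{compF} and the hypothesis on $B$, and then invoke Theorem \ref{indComp} for completeness. No discrepancies worth noting.
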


\begin{proof}
    Since $B \subseteq B' \subseteq F(A)$, $c(F(A)) \subseteq c(B') \subseteq c(B)$. However, $c(B) = c(F(A))$ by Theorem \ref{compF}, so $c(B') = c(F(A)) = c(A)$, and so by Theorem \ref{indComp} $B'$ is complete.
\end{proof}

We may now easily see that $F(G) = \perf(G) = \T$ since $c(G) = c(\T)$.

\begin{example}
    Let $A$ be complete and let $\Gamma(A)$ be the reflexive $A$-function such that $\Gamma(A)(n) = \{1,n\} \cup P_A(n)$. Then $\Gamma$ is complete by Corollary \ref{corrComp} and $c(\Gamma(A)) = c(A)$.
\end{example}

The following will be needed in the next section:

\begin{lemma}\label{ABsplit}
    If $A$ is complete, $F:=F(A)$, and $B := \perf(A)$, then $F$ is $(F,P)$-split.
\end{lemma}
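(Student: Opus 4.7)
The plan is to reduce the claim to the split property of the \Aname $A$-function $F := F(A)$. By Proposition \ref{FA}, $F$ is \Aname, and the corollary following Theorem \ref{frac} says that every \Aname $A$-function is split; i.e., whenever $m' \in F(m'n')$, $a' \in F(m')$, and $b' \in F(n')$, we have $a' \in F(a'b')$. Consequently, to obtain $(F, P)$-splitness of $F$ it suffices to upgrade the hypothesis $m \in P(mn)$ to $m \in F(mn)$, which will follow from the inclusion $P \subseteq F$.

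First I would verify $P(n) \subseteq F(n)$ for every $n$ by directly comparing the two constructions on the $A$-decomposition. Writing $n = q_1^{a_1} \cdots q_l^{a_l}$ (each $q_i \in \Primes_A$, so that $A(q_i) = \{1, q_i\}$), Proposition \ref{FA} unfolds to
\[
    F(n) = A(q_1)^{a_1} \cdots A(q_l)^{a_l} = \{q_1^{b_1} \cdots q_l^{b_l} : 0 \leq b_i \leq a_i\}.
\]
On the $P$ side, the recursion in Proposition \ref{PA} terminates at the first level once one observes that $\omega_A(q_i^{a_i}) = 1$ for each $i$ --- otherwise an alternative $A$-decomposition of $q_i^{a_i}$ involving additional $A$-primitives, combined with the freedom to vary $f \in c(A)$ on $A$-primitives granted by Theorem \ref{comp}, would collide with the unique relation $f(n) = \prod_i f(q_i)^{a_i}$ when reassembled inside $n$. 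Hence $P(q_i^{a_i}) = \{1, q_i^{a_i}\}$ and
\[
    P(n) = \{1, q_1^{a_1}\} \cdots \{1, q_l^{a_l}\} = \Bigl\{\prod_{i \in T} q_i^{a_i} : T \subseteq \{1, \ldots, l\}\Bigr\},
\]
which is visibly contained in $F(n)$ via the specialization $b_i \in \{0, a_i\}$.

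With $P \subseteq F$ in hand, the lemma follows in one line: given $m \in P(mn)$, $a \in F(m)$, and $b \in F(n)$, the containment delivers $m \in F(mn)$, and the split property of $F$ then yields $a \in F(ab)$. This is exactly what it means for $F$ to be $(F, P)$-split.

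The only substantive obstacle is the auxiliary claim $\omega_A(q_i^{a_i}) = 1$; everything else is bookkeeping against structural results already in place. I expect that claim to be a short consistency check driven by the uniqueness portion of Theorem \ref{comp}, as sketched above.
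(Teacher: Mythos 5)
Your overall route is the same as the paper's: reduce the lemma to the containment $\perf(A) \subseteq F(A)$ together with the split property of the \Aname function $F(A)$, so that $m \in \perf(A)(mn)$ upgrades to $m \in F(A)(mn)$ and splitness of $F(A)$ yields $a \in F(A)(ab)$. (The paper does not cite the corollary ``\Aname implies split'' by name; it re-runs that corollary's argument, deducing $ab \in F(mn)$ from $F \in C(F)$ and then appealing to ``the $F$-decomposition of $ab$.'' Your phrasing of that step is the cleaner one, and the paper likewise asserts $B \subseteq F$ without proof.)

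The gap is exactly where you suspected, and it is not a short consistency check: the claim $\omega_A(q_i^{a_i}) = 1$ is false for general complete $A$. Completeness makes the decomposition of $n$ unique, but it imposes no compatibility between the $A$-decomposition of $n$ and the $A$-decomposition of the \emph{integer} $q_i^{a_i}$, which is computed from $A(q_i^{a_i})$ and its own divisor chains and may involve primitives other than $q_i$. Concretely, take the homogeneous complete $A$ with $A(p) = \{1,p\}$, $A(p^2) = \{1,p^2\}$, and $A(p^a) = \{1,p,p^{a-1},p^a\}$ for $a \geq 3$. Every $f \in c(A)$ satisfies $f(p^a) = f(p)^{a-2}f(p^2)$, so the $A$-decomposition of $p^a$ is $p^{a-2}(p^2)^1$; in particular $p^6$ decomposes as $p^4(p^2)^1$ with $q_1 = p$, $a_1 = 4$, while $\omega_A(p^4) = 2$ since $p^4$ decomposes as $p^2(p^2)^1$. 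The recursion defining $\perf(A)$ therefore does not terminate after one level, and your closed form $\perf(A)(n) = \{1,q_1^{a_1}\}\cdots\{1,q_l^{a_l}\}$ fails: for instance $p^4 \in \perf(A)(p^8)$ but $p^4 \notin \{1,p^6\}\cdot\{1,p^2\}$. What your argument genuinely needs is only $\perf(A) \subseteq F(A)$, which is precisely the unproved ``$B \subseteq F$'' in the paper's own proof; establishing it requires tracking the full recursion defining $\perf(A)$ (and, for non-multiplicative $A$-primitives, it is not even clear that it survives), not the one-level formula. So your proposal is faithful to the paper's argument, but the step you isolated as ``the only substantive obstacle'' is a real obstacle, and the justification you sketch for it does not go through.
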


\begin{proof}
    Suppose $m \in B(mn)$, $a \in F(m)$, and $b \in F(n)$. Then since $B \subseteq F$, $m \in F(mn)$, and since $F \in C(F)$, $ab \in F(mn)$, so $F(ab) \subseteq F(mn)$. An argument involving the $F$-decomposition of $ab$ reveals that we must have $a \in F(ab)$, so we are done.
\end{proof}

\section{Results for $A$-rafts and examples}\label{Results}

\subsection{Main result for $A$-rafts}

We have the following:

\begin{theorem}\label{close}
    Suppose $A$ is complete. Then the closure of $\Rat(F(A))$ under the above metric is $c(P(A))$.
\end{theorem}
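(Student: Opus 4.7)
The plan is to prove both inclusions $\overline{\Rat(F(A))} \subseteq c(P(A))$ and $c(P(A)) \subseteq \overline{\Rat(F(A))}$ separately.

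For the forward inclusion, I first observe that $c(P(A))$ is closed in the Schwab--Silberg metric, since the defining equations $f(mn) = f(m)f(n)$ (for $m \in P(A)(mn)$) are preserved under pointwise-eventual-equality of sequences. It therefore suffices to show $\Rat(F(A)) \subseteq c(P(A))$. Since $P(A) \subseteq F(A)$ yields $c(F(A)) \subseteq c(P(A))$, what remains is (a) that $*_{F(A)}$ preserves $c(P(A))$, and (b) that $*_{F(A)}$-inverses of class-$P(A)$ functions remain class-$P(A)$. For (a), I invoke Theorem~1 of \cite{us4}, verifying its three conditions: $F(A) \in C(P(A))$ (from $P(A) \subseteq F(A)$ together with $F(A) \in C(F(A))$); $\tau_{F(A)} \in c(P(A))$ (since $m \in P(A)(mn)$ forces $\prim_A(m) \cap \prim_A(n) = \emptyset$, so the set product $F(A)(m) \cdot F(A)(n)$ is collision-free and hence $|F(A)(mn)| = |F(A)(m)|\cdot|F(A)(n)|$); and every $f \in c(P(A))$ is $(F(A), P(A))$-split, which extends Lemma~\ref{ABsplit} by the parallel observation that $a \in F(A)(m)$, $b \in F(A)(n)$, $m \in P(A)(mn)$ together force $\prim_A(a) \subseteq \prim_A(m)$ and $\prim_A(b) \subseteq \prim_A(n)$ to be disjoint, hence $a \in P(A)(ab)$ and so $f(ab) = f(a)f(b)$. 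Part (b) follows by induction on $n$ using $(h *_{F(A)} h^{-1})(n) = 0$ for $n > 1$ together with the splitting established in (a).

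For the reverse inclusion, given $f \in c(P(A))$ and $N \in \N$, I construct an $F(A)$-raft $g$ with $g(k) = f(k)$ for all $k \leq N$. For each $q \in \Primes_A$ and each $\gamma \in \C$, define the class-$F(A)$ building block $e_{q, \gamma}$ by $e_{q, \gamma}(q) = \gamma$ and $e_{q, \gamma}(q') = 0$ for $q' \in \Primes_A \setminus \{q\}$, extended via (P3) for $F(A)$; this $e_{q, \gamma}$ vanishes outside integers whose $A$-decomposition uses only $q$ as primitive. For each $A$-primitive $q$ with some $q$-power $\leq N$, construct a sub-raft $g_q$ as a $*_{F(A)}$-product of finitely many $e_{q, \lambda_{q, j}}$ and their $*_{F(A)}$-inverses, chosen so that $g_q(q^a) = f(q^a)$ for every $a$ with $q^a \leq N$ and $\omega_A(q^a) = 1$. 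When multiple $q$-powers are $P(A)$-primitive (as with $A = D$), this reduces to the classical factorization of the truncated polynomial $1 + f(q) x + \cdots + f(q^{v_q}) x^{v_q}$ over $\C$; when essentially one $q$-power suffices (as with $A = D^1$ or $A = \T$), the single factor $e_{q, f(q)}$ does the job. Set $g$ to be the $*_{F(A)}$-convolution of all the $g_q$'s over $A$-primitives $q \leq N$.

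Then $g$ is an $F(A)$-raft, hence class-$P(A)$ by the forward inclusion. Because the sub-rafts $g_q$ have supports involving disjoint $A$-primitives across different $q$'s, $P(A)$-multiplicativity of $g$ yields $g(n) = \prod_i g_{q_i}(q_i^{a_i}) = \prod_i f(q_i^{a_i}) = f(n)$ for every $n \leq N$ with $A$-decomposition $n = q_1^{a_1} \cdots q_l^{a_l}$, so $d(g, f) \leq 1/(N+1)$ and letting $N \to \infty$ supplies a convergent sequence. The main obstacle is establishing the sub-raft construction in full generality: for each $A$-primitive $q$, one must show that the $*_{F(A)}$-convolutions of the $e_{q, \gamma}^{\pm 1}$ are flexible enough to prescribe any required finite sequence of values on the $P(A)$-primitives that are $q$-powers, which hinges on the combinatorial structure of $F(A)(q^a)$ and requires a unified argument spanning the ``Bell-series rich'' case typified by $A = D$ and the ``one-shot matching'' case typified by $A = D^1$ and $A = \T$.
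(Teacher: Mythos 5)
Your proposal follows essentially the same route as the paper's proof: the forward inclusion is obtained, exactly as in the paper, by verifying the three hypotheses of Theorem 1 of \cite{us4} ($F(A) \in C(P(A))$ from Theorem \ref{frac} and $P(A) \subseteq F(A)$, $\tau_{F(A)} \in c(P(A))$ via the disjoint-primitive-support argument of Lemma \ref{perfprim}, and the $(F(A),P(A))$-split condition via Lemma \ref{ABsplit}), combined with the observation that $c(P(A))$ is closed in the Schwab--Silberg metric; the reverse inclusion is, as in the paper, a raft matching $f$ on every $P(A)$-primitive up to a given bound $N$. Two comments. First, you are more careful than the paper in one respect: the paper only asserts that $c(P(A))$ is closed under $*_{F(A)}$ and never explicitly checks that $*_{F(A)}$-inverses of class-$P(A)$ functions remain class-$P(A)$, which your step (b) supplies. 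Second, the ``main obstacle'' you flag is closed in the paper without any case split between a ``Bell-series rich'' and a ``one-shot'' regime: for a fixed $A$-primitive $q$, the $P(A)$-primitives that must be matched are exactly the $q^a$ whose $A$-decomposition is $(q)^a$ (i.e.\ $a < \chi_A(q)$), and since a class-$F(A)$ factor may take an arbitrary value $\lambda$ at $q$ with its value at such a $q^a$ then forced to equal $\lambda^a$, matching $f$ at $q, q^2, \dots, q^k$ by a $k$-fold convolution is a triangular system whose $a$-th equation is a nonconstant polynomial over $\C$ in the newly introduced unknown (the paper exhibits the quadratic case explicitly); this is precisely your truncated-polynomial factorization, and the cases $D^1$ and $\T$ are merely the degenerate instances $\chi_A(q)=2$ where only $a=1$ occurs. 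So no separate unified argument is needed beyond what you already sketch, and your construction, like the paper's, is correct modulo writing out that triangular induction.
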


\begin{proof}
    Assume $A$ is complete and let $F = F(A)$ and let $B = \perf(A)$. First we show that $*_F$ preserves $c(B)$ since this will show that $\Rat(F) \subseteq c(B)$. By Theorem 1 of \cite{us4}, $*_F$ preserves $c(B)$ if and only if \begin{enumerate}
        \item $F \in C(B)$
        \item $\tau_F \in c(B)$
        \item For every $f \in c(B)$, $f$ is $(F,B)$-split.
    \end{enumerate}
    
    By Theorem \ref{frac}, $F \in C(F)$ and hence $F \in C(B)$ since $B \subseteq F$. Furthermore, Lemma \ref{perfprim} combined with the definition of $F$ implies that $\tau_F \in c(B)$. Finally, let $f \in c(B)$. If $m \in B(mn)$ $a \in F(m)$ and $b \in F(n)$ then the following is true: Since $m \in B(mn)$, we may relabel the $A$-primitive elements $q_i$ and write this generally as $q_1^{a_1} \cdots q_l^{a_l} \in B(q_1^{a_1} \cdots q_l^{a_l}q_{l+1}^{a_{l+1}} \cdots q_k^{a_k})$, and we may write $a$ as $q_1^{c_1} \cdots q_l^{c_l}$ and $b$ as $q_{l+1}^{c_{l+1}} \cdots q_k^{c_k}$. 
    
    But since $F$ is $(F,B)$-split by Lemma \ref{ABsplit}, $a \in F(ab)$ and we have $q_1^{c_1} \cdots q_l^{c_l} \in B(q_1^{c_1} \cdots q_l^{c_l} \cdots q_{l+1}^{a_{l+1}} \cdots q_k^{c_k})$ which itself implies $B$ is $(F,B)$-split but in particular implies $f(ab) = f(a)f(b)$. So indeed, $*_A$ preserves $c(B)$. This implies that the closure of $\Rat(A)$ is a subset of $c(B)$, as $c(B)$ is closed under the $A$-convolution and hence all $A$-convolutions of arithmetical functions are in $c(B)$.
    
    We demonstrate that the closure of $\Rat(A)$ is $c(B)$. First, note that $c(B)$ is closed, since every sequence $\{f_n\}$ of functions in $c(B)$ will consist entirely of functions which are class-$B$, and hence cannot converge to a function lacking this property. 
    
    Now take $f \in c(B)$. We demonstrate that this $f$ may be approximated by a sequence of rational functions from $\Rat(A)$. Since we may freely choose $A$-primitive elements for some class-$A$ arithmetical function $g_1$, we may choose these in such a way that $g_1(n) = f(n)$ if $n \in \Primes_A$. Since $f \in c(B)$, it is sufficient to construct a sequence $g_k$ in $\Rat(F)$ such that in the limit $g_k(n) \rightarrow f(n)$ for all $n \in \Primes_B$. 
    
    Let $h_1 = g_1$. Then suppose $n$ has $A$-decomposition $q^2$. We will show that we can find class-$A$ functions $g_{2,1}$ and $g_{2,2}$ such that $(g_{2,1}*_Ag_{2,2})(q) = f(q)$ and $(g_{2,1}*_Ag_{2,2})(q^2) = f(q^2)$. Notice that this turns into the following system of equations:
    
    \[\begin{aligned} f(q) &= g_{2,1}(q) + g_{2,2}(q)   \\ f(q^2) &=  g_{2,1}(q^2) + g_{2,1}(q)g_{2,2}(q) + g_{2,2}(q^2).\end{aligned}\] 
    
    We may express $g_{2,1}(q)$ in terms of $g_{2,2}(q)$ as $g_{2,1}(q) =- g_{2,2}(q) +f(q)$, which turns our second equation into \[\begin{aligned}f(q^2) &= - g_{2,2}(q) +f(q))^2 + (- g_{2,2}(q) +f(q))g_{2,2}(q) + g_{2,2}(q^2)  \\ &=f(q)^2 - 2f(q)g_{2,2}(q) + g_{2,2}(q)^2 + - g_{2,2}(q)^2 +f(q)g_{2,2}(q) + g_{2,2}(q)^2.\end{aligned}\] Since we may choose $g_{2,2}(q)$ freely in $\mathbb{C}$ and since the above can be viewed as a quadratic equation in $g_{2,2}(q)$, we may choose such a value that satisfies it. Letting $g_2 = g_{2,1}*_Ag_{2,2}$ gives us $g_2(q) = f(q)$ and $g_2(q^2) = f(q^2)$. 
    
    Proceeding in a similar fashion, we can construct $g_k$ using $k$ class-$A$ functions $g_{k,1}, \cdots, g_{k,k}$ such that $g_k(q^a) = f(q^a)$ for all $a \leq k$ and for all $q \in \Primes_A$. In the limit $g_k$ approaches $f$ and hence the closure of $\Rat(A)$ is $c(B)$.
\end{proof}

\begin{corollary}
    For all \Aname $A$-functions $A$ and for all $f \in c(\perf(A))$, $f$ may be approximated by a sequence $\{f_k\} \subseteq \Rat(A)$ such that for each $k$, $f_k$ is $A$-$k$-ic.
\end{corollary}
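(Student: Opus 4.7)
The plan is to read the approximating sequence directly off the proof of Theorem~\ref{close} and verify that it already has the $A$-$k$-ic form, then quantify the rate of convergence in the Schwab--Silberg metric. First I would observe that because $A$ is paradigmatic we have $F(A)=A$ by the definition of $F$, so $\Rat(F(A))=\Rat(A)$, and Theorem~\ref{close} tells us the closure of $\Rat(A)$ is $c(\perf(A))$. Given $f\in c(\perf(A))$, I would set $f_k:=g_k$ where $g_k=g_{k,1}*_A\cdots*_A g_{k,k}$ is the function built inductively in that proof from $k$ class-$A$ functions so that $g_k(q^a)=f(q^a)$ for every $q\in\Primes_A$ and every $a\le k$. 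Since the construction uses only $A$-convolutions of class-$A$ functions (no inverses appear), each $f_k$ is $A$-$k$-ic by definition.

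For the convergence, I would let $B:=\perf(A)$ and note that both $f$ and $g_k$ belong to $c(B)$: the former by hypothesis, and the latter because the argument inside the proof of Theorem~\ref{close} shows that $*_A$ preserves $c(B)$. Since $A$ is paradigmatic, the $B$-primitive elements are precisely the prime powers $q^a$ with $q\in\Primes_A$ and $a\ge 1$, so any class-$B$ function is determined on all of $\N$ by its values there. Consequently $f$ and $g_k$ agree at every integer $n$ whose $A$-decomposition $n=q_{i_1}^{a_{i_1}}\cdots q_{i_l}^{a_{i_l}}$ satisfies $\max_j a_{i_j}\le k$. The smallest $n$ violating this bound is at least $q_1^{k+1}\ge 2^{k+1}$, so $d(f_k,f)\le 2^{-(k+1)}$, which tends to $0$.

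The main obstacle, such as it is, will be this convergence bookkeeping: the proof of Theorem~\ref{close} produced the $g_k$ but stopped at the pointwise equalities $g_k(q^a)=f(q^a)$ without explicitly extending to the bound on all of $\N$ via class-$B$ membership. Once that extension is spelled out and one recalls that the smallest $A$-primitive element is at least $2$, the geometric rate $2^{-(k+1)}$ is immediate and gives convergence in the Schwab--Silberg metric. The $A$-$k$-ic property itself is already visible in the construction of $g_k$, so no further work is required on that front.
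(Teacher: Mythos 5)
Your proposal is correct and takes essentially the same route as the paper, whose entire proof of this corollary is the one-line observation that the sequence $g_k = g_{k,1} *_A \cdots *_A g_{k,k}$ built in the proof of Theorem~\ref{close} is already $A$-$k$-ic (no inverses occur). Your extra bookkeeping --- that $F(A)=A$ for \Aname $A$, and that agreement at the $\perf(A)$-primitive elements $q^a$ with $a\le k$ forces $d(g_k,f)\le 2^{-(k+1)}$ --- just makes explicit the convergence the paper leaves implicit.
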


\begin{proof}
    The proof of Theorem \ref{close} was undertaken by constructing such a sequence, so we are done.
\end{proof}

\begin{corollary}
    Every multiplicative function $f$ may be approximated by a sequence of functions $f_n$, where each $f_n$ is $n$-ic. In particular, even though $\varphi = \mu*I$ is rational, $\varphi$ may be approximated by such a sequence of $n$-ic functions.
\end{corollary}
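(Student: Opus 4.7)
The plan is to specialize the previous corollary to the case $A = D$, the Dirichlet $A$-function. First I would observe that $D$ is regular, hence paradigmatic by Proposition \ref{RSPfrac}, so the previous corollary applies. Next I would identify $\perf(D)$: using the definition in Proposition \ref{PA}, for $n = p_1^{a_1}\cdots p_l^{a_l}$ the $D$-decomposition of $n$, we have $\perf(D)(n) = \{1,p_1^{a_1}\}\cdots\{1,p_l^{a_l}\}$, which is exactly $D^1(n)$. Thus $\perf(D) = D^1$, and $c(\perf(D)) = c(D^1)$ is precisely the set of multiplicative arithmetical functions.

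Next I would note the correspondence of terminology: a $D$-$k$-ic function is, by the definition in the subsection on rafts, a Dirichlet convolution of $k$ completely multiplicative functions, which is the classical notion of a $k$-ic function. Thus the previous corollary, applied with $A = D$, says exactly that every $f \in c(D^1)$ (i.e.\ every multiplicative $f$) can be approximated, with respect to the Schwab--Silberg metric \eqref{eq:metric}, by a sequence $\{f_k\} \subseteq \Rat(D)$ in which $f_k$ is $k$-ic. This establishes the first sentence of the corollary.

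For the second sentence, I would simply point out that $\varphi$ is multiplicative, hence lies in $c(D^1)$, and that the classical identity $\varphi = \mu * I$, combined with $\mu = u^{-1}$ under the Dirichlet convolution, exhibits $\varphi$ as the Dirichlet convolution of $I$ (completely multiplicative) with the Dirichlet inverse of $u$ (completely multiplicative), so that $\varphi \in \Rat(D)$. Nevertheless, the preceding argument provides the desired approximating sequence of $n$-ic functions.

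There is essentially no obstacle here beyond the verification that $\perf(D) = D^1$ and the translation of the terminology ``$D$-$k$-ic'' into ``$k$-ic'' in the classical sense; both are direct from the definitions established earlier in the paper, so the entire corollary is a straightforward specialization of the previous one.
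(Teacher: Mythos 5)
Your proposal is correct and matches the paper's intent exactly: the corollary is stated as an immediate specialization of the preceding one to $A = D$, using $\perf(D) = D^1$ and the identification of $D$-$k$-ic with the classical $k$-ic notion, which is precisely what you carry out. The paper gives no separate proof, so your explicit verification that $\perf(D) = D^1$ and that $c(D^1)$ is the class of multiplicative functions is a faithful (and slightly more detailed) rendering of the same argument.
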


\begin{corollary}
    For all complete $A$-functions $A$, $(c(\perf(A)), *_{F(A)})$ and $(\Rat(F(A)),*_{F(A)})$ form groups.
\end{corollary}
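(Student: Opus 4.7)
Set $F := F(A)$ and $B := \perf(A)$. My plan is to verify the group axioms for each of the two pairs separately, drawing almost entirely on results already established in this paper.

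I first observe that $F$ is a \Aname $A$-function by Proposition \ref{FA}, so $F \in C(F)$ by Theorem \ref{frac}, and hence $*_F$ is associative by the refinement of Lemma 5 of \cite{us4} proved immediately after that theorem. Moreover $F$ is symmetric, since its definition $F(n) = A(q_1)^{a_1}\cdots A(q_l)^{a_l}$ together with $A(q_i)=\{1,q_i\}$ makes $F(n)$ closed under $d \mapsto n/d$; consequently $*_F$ is also commutative. Because $F$ is simple and reflexive, Narkiewicz's result quoted in Section \ref{Prelims} identifies $\iota$ as the two-sided $*_F$-identity. The function $\iota$ lies in $c(B)$ for every $A$-function $B$, and in $\Rat(F)$ because $\iota = f *_F f^{-1}$ for any $f \in c(F)$.

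For $(\Rat(F),*_F)$ the argument is then essentially formal. Given two $F$-rafts, their $*_F$-product is again a finite $*_F$-string of class-$F$ functions and $*_F$-inverses of class-$F$ functions, so closure holds directly from the definition. Commutativity of $*_F$ lets me rearrange any raft so that all non-inverted factors precede all inverted ones, and the $*_F$-inverse is then the raft obtained by swapping the two blocks and inverting each factor individually. Combined with associativity and the identity $\iota$, this furnishes the group.

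For $(c(B),*_F)$, closure is exactly the content of the opening paragraphs of the proof of Theorem \ref{close}, where $*_F$ is shown to preserve $c(B)$. The remaining, and main, obstacle is to show that $f \in c(B)$ forces $f^{-1} \in c(B)$ under $*_F$. The plan is to invoke Corollary 3 of \cite{us4} in exactly the same pattern employed earlier in this paper's proof of the restatement of Ryden's theorem: once $*_F$ is associative and preserves $c(B)$, that corollary produces the group structure on $c(B)$. If a direct construction is preferred, one may instead define a candidate inverse $g$ by specifying its values on $B$-primitives so that $(f*_F g)(q) = 0$ for every $q \in \Primes_B$, extend $g$ to all of $\N$ via class-$B$ multiplicativity using the $B$-decomposition, and verify by induction on $n$ that $f *_F g = \iota$, using that both sides lie in $c(B)$ and agree on $B$-primitives.
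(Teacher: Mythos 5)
Your proof is correct and follows the route the paper clearly intends (it states this corollary without proof, but the ingredients are exactly the ones you use): the opening of the proof of Theorem \ref{close} gives that $*_{F(A)}$ preserves $c(\perf(A))$, associativity and commutativity come from $F(A)$ being \Aname, and the group structure then follows from Corollary 3 of \cite{us4} just as in the paper's proof of the Ryden restatement, with the $\Rat(F(A))$ case being formal. Both your fallback direct construction of inverses and your handling of identity and closure are sound.
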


The latter result is in fact true for any reflexive $A$-function which is associative -- hence, for example, $(\Rat(G),*_G)$ forms a (non-Abelian) group.

\begin{remark}
    It is unknown to the authors if results like this are known in the literature. Technically, one need not invoke the analytical language used here in order to realize Theorem \ref{close}, but the authors found no references to any such results in the literature.
    
    Indeed, naming functions which satisfy \eqref{eq:raft} ``rational'' seems to be even more fitting if one realizes that, under a natural discrete metric, these functions ``approximate'' the class of multiplicative arithmetical functions in a manner directly analogous to the rational approximation of a real number.
\end{remark}

\subsection{Examples}

We first extend the concept of the characteristic function $\chi_A$ to complete $A$-functions:

\begin{definition}
    Suppose $A$ is complete. Then for each $q \in \Primes_A$, let $\chi_A(q)$ denote the least $a$ such that the $A$-decomposition of $q^a$ is not $(q)^a$, or $\infty$ if no such $a$ exists.
\end{definition}

Note that this definition agrees with the original definition of $\chi_A$ from \cite{us2}.

\begin{definition}
    Let $A$ be complete. We say $A$ is \textbf{$N$-finitely primitive} if $\chi_A(q) \leq N$ for all $q \in \Primes_A$. Otherwise, if $\chi_A(q) < \infty$ for all $q \in \Primes_A$ but $A$ is not $N$-finitely primitive for any $N$, we call $A$ \textbf{quasi-infinitely primitive}. If $\chi_A(q) = \infty$ for some $q \in \Primes$, we call $A$ \textbf{infinitely primitive}.
    
    If $\chi_A(q) = N$ for all $q \in \Primes_A$, we call $A$ \textbf{strictly $N$-finitely primitive}, and if $\chi_A(q) = \infty$ for all $q \in \Primes_A$, we call $A$ \textbf{strictly infinitely primitive}.
\end{definition}

\begin{proposition}\label{eqorno}
    If $A$ is $N$-finitely primitive, then $\Rat(F(A)) = c(\perf(A))$, and furthermore for all $f \in c(\perf(A))$, $f$ may be represented as the $F(A)$-convolution of $N$ or fewer functons $f_k \in c(F(A))$. If $A$ is quasi-infinitely primitive or infinitely primitive, then $\Rat(F(A))$ is not equal to $c(\perf(A))$.
\end{proposition}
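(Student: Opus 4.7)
The plan is to establish both claims via generating-function analysis on pure powers of a fixed $A$-primitive element. The inclusion $\Rat(F(A)) \subseteq c(\perf(A))$ holds unconditionally: the preceding corollary exhibits $(c(\perf(A)), *_{F(A)})$ as a group, and the inclusion $\perf(A) \subseteq F(A)$ gives $c(F(A)) \subseteq c(\perf(A))$, so every $F(A)$-raft built from class-$F(A)$ functions lies in $c(\perf(A))$.

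Suppose now that $A$ is $N$-finitely primitive. For each $f \in c(\perf(A))$ I construct $f_1, \ldots, f_N \in c(F(A))$ with $f = f_1 *_{F(A)} \cdots *_{F(A)} f_N$, working independently on each ``primitive chain'' $q = q^{j_1}, q^{j_2}, q^{j_3}, \ldots$ of successive $A$-primitive powers of a fixed $q \in \Primes_A$, where $j_{r+1}/j_r = \chi_A(q^{j_r}) \leq N$. Between consecutive levels, the $\perf(A)$-primitive powers of $q$ are precisely the $q^{j_r k}$ for $1 \leq k < \chi_A(q^{j_r})$, each with $A$-decomposition $(q^{j_r})^k$ and therefore $F(A)$-divisor set $\{q^{j_r l} : 0 \leq l \leq k\}$; since each $f_i$ is class-$F(A)$, a direct expansion yields
\[
(f_1 *_{F(A)} \cdots *_{F(A)} f_N)(q^{j_r k}) = h_k\bigl(f_1(q^{j_r}),\ldots,f_N(q^{j_r})\bigr),
\]
where $h_k$ is the complete homogeneous symmetric polynomial of degree $k$ in $N$ variables. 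The requirements $h_k = f(q^{j_r k})$ for $k = 1, \ldots, \chi_A(q^{j_r}) - 1 \leq N - 1$ form a system of at most $N - 1$ polynomial equations in the $N$ complex unknowns $f_i(q^{j_r})$; by the Newton-type identities relating $h_k$ and the elementary symmetric functions $e_k$, this is equivalent to prescribing $e_1, \ldots, e_{\chi_A(q^{j_r}) - 1}$ via a triangular system, and any choice of the remaining $e_i$ produces a degree-$N$ monic polynomial whose $N$ complex roots supply a solution. The distinct $A$-primitive powers of $q$ are independent elements of $\Primes_{F(A)}$, so choices across levels and chains do not interact; assembling yields the required $f_i \in c(F(A))$, and the resulting $N$-fold convolution agrees with $f$ at every $\perf(A)$-primitive, hence everywhere by class-$\perf(A)$-ness. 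Padding with copies of $\iota$ recovers the ``$N$ or fewer'' clause.

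For the second assertion, I exhibit an $f \in c(\perf(A)) \setminus \Rat(F(A))$. If $A$ is infinitely primitive, pick $q \in \Primes_A$ with $\chi_A(q) = \infty$; then every $q^a$ is $\perf(A)$-primitive, $F(A)(q^a) = D(q^a)$, and any $g \in c(F(A))$ has generating function $\sum_a g(q^a) x^a = (1 - g(q)x)^{-1}$. Consequently $\sum_a h(q^a) x^a$ is a finite product of factors $(1 - \alpha x)^{\pm 1}$ for every $h \in \Rat(F(A))$, hence rational in $x$, while the choice $f(q^a) = 1/a!$ yields the non-rational $e^x$. If $A$ is merely quasi-infinitely primitive, pick $q_n \in \Primes_A$ with $\chi_A(q_n) \to \infty$ and set $f(q_n^a) = 1/a!$ for $1 \leq a < \chi_A(q_n)$, extending freely elsewhere on $\perf(A)$-primitives. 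An $M$-fold raft would force the first $\chi_A(q_n)$ coefficients of $\sum_a f(q_n^a)x^a$ to be the Taylor coefficients of a rational function of degree $\leq M$, and hence to satisfy a constant-coefficient linear recurrence of order $\leq M$ past an initial segment. But if $\sum_{i=0}^K \alpha_i/(a+i)! = 0$ for all large $a$, multiplying by $(a+K)!$ produces a polynomial identity in $a$ of leading term $\alpha_0 a^K$, forcing $\alpha_0 = 0$ and then successively $\alpha_i = 0$; so $\{1/a!\}$ obeys no such recurrence, and selecting $n$ with $\chi_A(q_n) > 2M + 1$ contradicts $M$-foldness for every $M$.

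The main obstacle is the solvability claim in the first half: it is the strict inequality $\chi_A(q^{j_r}) - 1 < N$ provided by $N$-finiteness that keeps the symmetric-polynomial system under-determined at every level, and it is the independence of distinct $A$-primitive powers within $\Primes_{F(A)}$ that decouples the levels, so that the chain-by-chain construction is free of hidden global constraints.
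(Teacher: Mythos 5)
Your proof is correct in substance. For the first claim it is essentially the paper's argument made explicit: the paper simply points back to the construction in Theorem \ref{close} and observes that at each $q \in \Primes_A$ a system with at most $N$ unknowns must be solved, whereas you identify that system concretely as $h_k\bigl(f_1(q),\dots,f_N(q)\bigr) = f(q^k)$ for $1 \leq k < \chi_A(q)$ with $h_k$ the complete homogeneous symmetric polynomials, and solve it via the triangular relations between the $h_k$ and the elementary symmetric functions together with the fundamental theorem of algebra --- a genuine gain in rigor over the paper's one-sentence appeal. One caveat: your ``primitive chain'' bookkeeping is not quite right for a general complete $A$. The $A$-decomposition of $q^{\chi_A(q)}$ need not be a power of a single new primitive $q^{j_2}$, and the $\perf(A)$-primitive powers of a fixed integer need not be exhausted by the elements $q^{j_r k}$: in the paper's homogeneous paradigmatic example with $A(p^4)=\{1,p^2,p^4\}$ and $A(p^a)=D^1(p^a)$ for most $a$, the element $p^3$ is $\perf(A)$-primitive but lies on no chain through $p$. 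This does no harm, because each element of $\Primes_{\perf(A)}$ equals $Q^k$ for exactly one pair $(Q,k)$ with $Q \in \Primes_A$ and $1 \leq k < \chi_A(Q)$, so running your level-one system over \emph{all} $Q \in \Primes_A$ (rather than over chains) covers every $\perf(A)$-primitive and the systems still decouple; but the argument should be stated that way. For the second claim your route is genuinely different from, and sharper than, the paper's: where the paper gestures at ``diagonal-type arguments,'' you produce an explicit $f \in c(\perf(A))$ outside $\Rat(F(A))$ by noting that along a stretch where $F(A)(q^a) = \{1,q,\dots,q^a\}$ every raft has a rational generating function, hence coefficients eventually satisfying a short linear recurrence, and that $1/a!$ defeats any recurrence of order $\leq M$ once $\chi_A(q_n) > 2M+1$. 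This buys an effective, checkable witness where the paper offers only a sketch.
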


\begin{proof}
    In the proof for Theorem \ref{close}, if $A$ is $N$-finitely primitive then our approximations $f_k$ may stop at $k = N$ due to the fact that at each $q \in \Primes_A$, a system of equations with at most $N$ unknowns is to be solved. Hence, $Rat(F(A)) = c(P(A))$.
    
    By the same token if $A$ is quasi-infinitely primitive or infinitely primitive, then we may apply diagonal-type arguments to choose $f \in c(P(A))$ so as to avoid any finite sequence of $f_k$ that end in $f_N = f$, giving us our result. 
\end{proof}

The remainder of this work will be dedicated to a few examples of applying Theorem \ref{close} and Proposition \ref{eqorno} to various $A$-functions.

\begin{example}
    In the most basic application of Theorem \ref{close}, we have that the closure of $\Rat(D)$ is $c(D^1)$, but since $D$ is strictly infinitely primitive, $\Rat(D) \neq c(D^1)$, which is to say that not every multiplicative arithmetical function is rational.
\end{example}

\begin{example}
    Consider $\T_k$, the $k$-factorization $A$-functions of Litsyn and Shevelev. For $k = 2$, $\T_k = \T$ is perfect, but this is not the case for $k > 2$. Nevertheless, for each $k \in \N$, $\T_k$ is $k$-finitely primitive, so $\Rat(\T_k) = \perf(\T_k) = (\T_k)^1$.
\end{example}

\begin{example}
    In general, if $A$ is finitely structured, $A$ is \textit{infinitely} primitive, whereas if $A$ is infinitely structured, then $A$ may be quasi-infinitely primitive or $N$-finitely primitive. According to the class into which the structured $A$-function $A$ falls, we may have $\Rat(A) = c(A^1)$ or $\Rat(A) \neq c(A^1)$.
\end{example}

\begin{example}
    We shall construct a \Aname $A$-function which is strictly infinitely primitive but not multiplicative. We observe that, prior to this introduction, no such $A$-function with these properties exists -- $D$ is strictly infinitely primitive, but is multiplicative.
    
    Let $\S$ be the \Aname $A$-function such that $\Primes_{\S}$ is the set of all square-free integers, with the $\S$-decomposition of a non-square-free integer $n$ proceeding according to the following algorithm:
    \begin{enumerate}
        \item Let $a_1$ denote the largest power of $q_1 := \gamma(n)$ which divides $n$.
        \item Let $n_1 = \frac{n}{(\gamma(n))^{a_1}}$.
        \item Repeat steps 1 and 2 for $n := n_1$ until $n = 1$, incrementing the index each iteration.
        \item Write $n = q_1^{a_1} \cdots q_l^{a_l}$.
    \end{enumerate}
    Then $\S(n) := \S(q_1)^{a_1} \cdots \S(q_l)^{a_l}$. That this $A$-function is well-defined is clear, and by construction it is \Aname. 
    
    Furthermore, for all $q \in \Primes_{\S}$ and any $a \in \N$, $(q)^a$ is the $A$-decomposition of $q^a$, which implies that $\S$ is strictly infinitely primitive, as desired. Since $\S(12) = \{1,2,6,12\} \neq \{1,2,4\} \cdot \{1,3\}$, it is clear that $\S$ is not multiplicative.
\end{example}

\section{Acknowledgements}
    We would like to thank the Directed Research Program at The University of Texas at Dallas for facilitating the collaboration that led to this manuscript.

\end{document}